\newtheorem{thm}{Theorem}[section]
\newtheorem{prop}[thm]{Proposition}
\newtheorem{lem}[thm]{Lemma}
\newtheorem{cor}[thm]{Corollary}
\newtheorem{defn}[thm]{Definition}
\newtheorem{fact}[thm]{Fact}
\theoremstyle{remark}
\newtheorem{rem}[thm]{Remark}
\newtheorem{exmp}[thm]{Example}
\newtheorem{prob}{Problem}
\newcommand{\mbb}{\mathbb}
\newcommand{\fra}{\mathfrak}
\newcommand{\p}{\fra{p}}
\newcommand{\q}{\fra{q}}
\newcommand{\ov}{\overline}
\newcommand{\N}{\mathbb{N}}
\newcommand{\Z}{\mbb{Z}}
\newcommand{\Q}{\mbb{Q}}
\newcommand{\D}{\mathcal{D}}
\newcommand{\isg}{Is(G'\cdot Z(G))}
\newcommand{\ds}{\displaystyle}
\journal{Annals of Pure and Applied Logic}
\begin{document}

\begin{frontmatter}



\title{Elementary coordinatization of finitely generated nilpotent groups}


\author[Alexei]{Alexei G. Myasnikov}

\address[Alexei]{Department of Mathematical Sciences,
Stevens Institute of Technology,
Castle Point on Hudson,
Hoboken, NJ 07030, USA, Tel:1-(201)-216-8598}

\ead{amiasnikov@gmial.com}

\author[Mahmood]{Mahmood Sohrabi\corref{Mahmood1}}

\address[Mahmood]{Department of Mathematical Sciences,
 Stevens Institute of Technology,
Castle Point on Hudson,
Hoboken, NJ 07030, USA, Tel:1-(201)-216-5425}
\ead{msohrab1@stevens.edu}
\cortext[Mahmood1]{Corresponding author}
 
\begin{abstract}
This paper has two main parts. In the first part we develop an elementary coordinatization for any nilpotent group $G$ taking exponents in a binomial principal ideal domain (PID) $A$. In case that the additive group $A^+$ of $A$ is finitely generated we prove using a classical result of Julia Robinson that one can obtain a central series for $G$ where the action of the ring of integers $\Z$ on the quotients of each of the consecutive terms of the series except for one very specific gap, called the special gap, is interpretable in $G$. Then we use a refinement of this central series to give a criterion for elementary equivalence of finitely generated nilpotent groups in terms of the relationship between group extensions and the second cohomology group.
\end{abstract}

\begin{keyword}
Nilpotent group\sep Elementary Equivalence \sep  Largest Ring of a Bilinear Map \sep Coordinatization \sep Abelian Deformation

\MSC 20 \sep 03C60 

\end{keyword}
\end{frontmatter}

\section{Introduction}
This paper continues the authors' efforts~\cite{MS, MS2010}, in providing a comprehensive and uniform approach to various model-theoretical questions on algebras and nilpotent groups. We introduce our main techniques for approaching the following fundamental problems (stated here for nilpotent groups).

\begin{prob}
Explain in algebraic terms when and why two finitely generated groups are elementarily equivalent.
\end{prob}
\begin{prob} Find an algebraic characterization of \emph{all} groups elementarily equivalent to a given \emph{arbitrary} finitely generated nilpotent group. \end{prob}

\begin{prob} Find an algebraic description of  nilpotent groups whose elementary theory is $\omega$-stable or of finite Morley rank.\end{prob}

\begin{prob} Given a finitely generated nilpotent group $G$, describe in algebraic terms a  set of axioms for the elementary theory $Th(G)$ of $G$.\end{prob}   

 Every finitely generated torsion-free nilpotent group $G$ can be looked at as a group admitting exponents in the ring of integers $\Z$ and therefore as a two-sorted structure $G_\Z=\langle G, \Z, s\rangle$, where $G$ is a group, $\Z$ is the ring of integers and $s$ is the predicate describing the action of $\Z$ on $G$. Obviously from logical point of view $G_\Z$ is a much richer structure than the pure group $G$. The main idea of our approach is to study to what extent $G_\Z$ can be recovered from $G$. We achieve this by interpreting a maximal ring $A_R(G)$ that acts on all quotients of a first-order definable central series $(R)$ of $G$, except for a special gap (we note that $A_R(G)$ is necessarily commutative). Indeed the $A_R(G)$-module structure of these modules are interpretable in $G$, as a pure group, uniformly with respect to the first-order theory $Th(G)$. Then the $\Z$-module structures of every quotient admitting an $A_R(G)$-module structure will be interpretable in $G$. The special gap mentioned above is the main cause of deviation of the elementarily equivalence from the isomorphism in the case of finitely generated nilpotent groups.
 The influence the special gap may have on the elementary equivalence and related questions is described completely in terms of second cohomology and abelian deformation of groups. The process of recovering the $\Z$-module structure of the quotients of terms of a central series of $G$ from the multiplication on $G$ using only the first-order theory of $G$ is termed {\em elementary coordinatization} of $G$.   
 
 In the first part of the paper (Sections 3-5) we describe the coordinatization of a finitely generated nilpotent group $G$ and show that (with exception of the special gap)  this coordinatization is first-order interpretable in $G$ uniformly with respect to $Th(G)$. In fact, the argument works as well for finitely generated nilpotent $A$-groups for an arbitrary binomial principal ideal domain $A$. It takes a particularly nice form if the additive group $A^+$ of $A$ is  finitely generated. Besides, we study the relationship between elementary equivalence and isomorphism of finitely generated $A$-modules in a two-sorted language (one sort for the abelian group and another for the ring of  scalars) where $A$ has a finitely generated additive group $A^+$. In particular, we give a description of elementarily equivalent finite dimensional $A$-algebras for such a ring $A$. In the second part (Sections 6-11) of the paper we address in full detail the first of the problems above, leaving the others for the future. We state the main results in Section~\ref{mainsec:intro}.

The problem of elementary equivalence of finitely generated nilpotent groups goes back to the results of Tarski where he showed that two free nilpotent groups of finite rank are elementarily equivalent if and only if they are isomorphic. In 1969, during his lectures at Novosibirsk University, Kargapolov asked whether this is the case for all finitely generated nilpotent groups. In 1971 Zilber solved this problem in the negative constructing particular examples of two finitely generated nilpotent groups of class 2 which are elementarily equivalent but not isomorphic \cite{Z71}. 

The first algebraic characterization of elementary equivalence of finitely generated (finite-by-) nilpotent groups was found by F. Oger~\cite{oger}. He proved that two such groups $G$ and $H$ are elementarily equivalent if and only if $G\times C \cong H \times C$, where $C$ refers to the infinite cyclic group written multiplicatively. This is a very nice result  which shows that in general elementary equivalence is not very far from isomorphism in the case of finitely generated nilpotent groups. Notice that this description does not reveal the algebraic reasons for the elementary equivalence of the groups $G$ and $H$. Indeed, this criterion enables one to check algorithmically (since the isomorphism problem is decidable here) if given $G$ and $H$ are elementarily equivalent, but it does not provide means to construct such an $H$ given a group $G$. The techniques developed in this paper allow us to answer this question completely in terms of second cohomologies and abelian deformations of groups. 
The techniques, approaches and aims (our main focus is on the Problems 1-4) are very different from Oger's \cite{oger}.


\section{Preliminaries, our approach and the main results}\label{mainsec:intro}
We open this section by discussing some preliminaries. In Subsection~\ref{approach:sec} we give an account of our approach and main techniques and state our main results.  
\subsection{Preliminaries on logic}
For the most part we follow standard model theory texts such as~\cite{hodges} regarding notation and model theory. A group $G$ is a structure with signature $\langle \cdot, ^{-1}, 1\rangle$ and the corresponding language is called $L$. 

For the convenience of the reader we shall introduce our notion of ``interpretation" which might be a bit different from the standard definitions.
\subsubsection{\protect Interpretations}\label{ss1}
\label{interpret1}Let $\mathfrak{B}$ and $\mathfrak{U}$ be  structures of signatures $\Delta$ and
$\Sigma$ respectively. We may assume that $\Sigma$ and $\Delta$ do not contain any function symbols replacing them if necessary with predicates (i.e. replacing operations with their graphs). The structure $\mathfrak{U}$ is said to be
\textit{interpretable}\index{interpretable} in $\mathfrak{B}$ with parameters $\bar{b}\in |\mathfrak{B}|^m$ or \textit{relatively
interpretable} in $\mathfrak{B}$ if there is a set of first-order formulas
$$\Psi=\{A(\bar{x},\bar{y}), E(\bar{x},\bar{y}_1,\bar{y}_2),\Psi_{\sigma}(\bar{x}, \bar{y}_1, \ldots ,
\bar{y}_{t_{\sigma}}): \sigma \textrm{  a predicate of signature  } \Sigma \}$$ of signature $\Delta$ such
that
 \begin{enumerate}
 \item $A(\bar{b})=\{\bar{a}\in|\mathfrak{B}|^n:\mathfrak{B}\models A(\bar{b},\bar{a})\}$ is not empty,
 \item $E(\bar{x},\ov{y}_1,\ov{y}_2)$ defines an equivalence relation $\epsilon_{\bar{b}}$ on $A(\bar{b})$,
 \item if the equivalence class of a tuple of elements $\bar{a}$ from $A(\bar{b})$ modulo the
 equivalence relation $\epsilon_{\bar{b}}$ is denoted by $[\bar{a}]$, for every $n$-ary predicate
 $\sigma$ of signature $\Sigma$, the predicate $P_{\sigma}$ is defined on
 $A(\bar{b})/\epsilon_{\bar{b}}$ by
 $$P_{\sigma}([\bar{b}],[\ov{a}_1], \ldots, [\ov{a}_n])\Leftrightarrow_{\text{def}}\mathfrak{B}\models \Psi_{\sigma}(\bar{b}, \ov{a}_1,
 \ldots, \ov{a}_n),$$
 \item There exists a map $f:A(\bar{b})\rightarrow |\fra{U}|$ such that the structures $\mathfrak{U}$ and
 $\Psi(\mathfrak{B},\bar{b})=\langle A(\bar{b})/\epsilon_{\bar{b}},P_{\sigma}:\sigma\in \Sigma \rangle$ are isomorphic via the map $\tilde{f}:A(\bar{b})/\epsilon_{\bar{b}}\rightarrow |\fra{U}|$ induced by $f$.
 \end{enumerate}
 Let $\Phi(x_1,\ldots, x_n)$ be a first-order formula of signature $\Delta$. If $\mathfrak{U}$ is interpretable in $\mathfrak{B}$ for any 
  parameters $\bar{b}$ such that $\mathfrak{B}\models \Phi(\bar{b})$ then  $\mathfrak{U}$
 is said to be \textit{regularly interpretable} in $\mathfrak{B}$ with the help of the formula $\Phi$. If the tuple $\bar{b}$ is empty, $\mathfrak{U}$ is said
 be \textit{absolutely interpretable} in $\mathfrak{B}$. 
 
Now let $T$ be a theory of signature $\Delta$. Suppose that $S: Mod (T) \rightarrow K$ is a functor defined on the class $Mod (T)$ of all models of the theory $T$ (a category with isomorphisms) into a certain category $K$ of structures of signature $\Sigma$. If there exists a system of first-order formulas $\Psi$ of signature $\Delta$, which absolutely interprets the system $S(\frak{B})$ in any model $\frak{B}$ of the theory $T$ we say that $S(\frak{B})$ is \textit{absolutely interpretable in $\frak{B}$ uniformly with respect to $T$}.
 
 For example, the center $Z(G)$ of a group $G$ is interpretable (or in this case definable) in $G$ uniformly with respect to the theory of groups. On the other hand, the commutator subgroup $G'$, generally speaking, is not interpretable in $G$ uniformly with respect to the theory of groups. However, it is so if $G$ is a finitely generated nilpotent group.
 
 \subsubsection{$A$-Modules as two-sorted structures} Assume $A$ is a commutative associative ring with unit and $M$ is an $A$-module. For us the $A$-module $M$ is a two-sorted structure $M_A=\langle M, A, s\rangle$ where $A$ is a ring, $M$ is an abelian group and $s=s(x,y,z)$, where $x$ and $z$ range over $M$ and $y$ ranges over $A$ is the predicate describing the action of $A$ on $M$, that is $\langle M,A,s\rangle \models s(m,a,n)$ if and only if $a\cdot m =n$. Sometimes we drop the predicate $s$ from our notation and write $M_A=\langle M, A\rangle$. When we say that \emph{the ring $A$ and its action on $M$ are interpretable in a structure $\mathfrak{U}$} we mean that the one-sorted structure naturally associated to $M_A$ is interpretable in $\mathfrak{U}$. In particular we shall see that an infinite finitely generated nilpotent group $G$ often contains definable normal subgroups, say $N$ and $M$, where $N\leq M$, $M/N$ is infinite abelian and the richer structure $\langle M/N, \Z\rangle$ is interpretable in $G$. In such a case we shall say that the ring of integers $\Z$ and its action on $M/N$ are interpretable in $G$. 
 
Note that if a multi-sorted structure has signature without any function symbols then there is a natural way to associate a one-sorted structure to it. We always assume that our signatures do not contain any function symbols, since functions can be interpreted as relations. Therefore when we talk about interpretability of multi-sorted structures in each other or interpretability of a multi-sorted structure into a one-sorted one we mean the interpretability of the associated one-sorted structures.  

Recall that a \emph{homomorphism $\theta : \langle M,A, s\rangle \to \langle N,B, t\rangle$ of two-sorted modules} is a pair $(\theta_1, \theta_2)$ where $\theta_1: M\to N$ is a homomorphism of abelian groups and $\theta_2:A \to B$ is a homomorphism of rings satisfying 
$$ s(m_1,a,m_2)\Leftrightarrow t(\theta_1(m_1), \theta_2(a), \theta_1(m_2)), \quad \forall a\in A, \forall m_1,m_2\in M.$$  A homomorphism $\theta$ as above is said be \emph{an isomorphism of two-sorted modules} if $\theta_1$ and $\theta_2$ are isomorphisms of the corresponding structures. 
 \subsection{Nilpotent groups}  
 Here we introduce basic definitions from the theory of nilpotent groups. Our basic references for nilpotent groups are \cite{hall} and \cite{war}.

Consider elements $x$ and $y$ of a group $G$. Let $[x,y]=x^{-1}y^{-1}xy$. We call $[x,y]$ the \textit{commutator} of the elements $x$ and $y$.  If $H$ and $K$ are subgroups of $G$, $[H,K]$ is the subgroup of
$G$ generated by commutators $[x,y]$, $x\in H$ and $y\in K$.
 A series of subgroups of $G$:
$$G=G_1\geq G_2 \geq \ldots G_n \geq G_{n+1}\geq \ldots ,$$
is called \emph{central} if $[G,G_i]\leq G_{i+1}$ for each $i\geq 1$. If $G$ has a central series as above and there exists a natural number $n$ such that $G_{n+1}=1$ then we say that $G$ is a \emph{nilpotent} group. 

 Assume $G$ is a nilpotent group. Let us define a series $\Gamma_1(G), \Gamma_2(G),
\ldots$ of subgroups of $G$ by setting
$$G=\Gamma_1(G), \quad \Gamma_{n+1}(G)=[\Gamma_n(G),G]\quad \textrm{for all $n>1$}\index{ $\Gamma_i(G)$}.$$
It can be easily checked that the above series is a central series. If $G$ is clear from the context we write $\Gamma_i$ for $\Gamma_i(G)$. We often denote $\Gamma_2(G)$ by $G'$ and we also use $Ab(G)$ for the abelian quotient $G/G'$. If $c$ is the least number such that
$\Gamma_{c+1}(G)=1$ then $G$ is said to be a \emph{nilpotent group of class $c$} or simply a
\textit{$c$-nilpotent} group. 

Let $Z(G)=\{x\in G: xy=yx, \forall y\in G\}$ denote the \emph{center of the group $G$}. We define a series of subgroups $Z_i(G)$ of $G$ by setting
$$Z_1(G)=Z(G), \quad Z_{i+1}(G)=\{x \in G:xZ_i(G)\in Z(G/Z_i(G))\},\quad i\geq 1.$$ This series is also a central series and called the \textit{upper central
series} of the group $G$. If $G$ is clear from the context we write $Z_i$ from $Z_i(G)$. Note that if $Z_{c}(G)=G$ and $Z_{c-1}(G)\neq G$ is and only if $G$
is a $c$-nilpotent group.
\subsubsection{Nilpotent groups admitting exponents in a binomial domain}
A \emph{binomial domain $A$} is a characteristic zero integral domain such that for all elements $a\in A$ and copies of positive integers $k=\underbrace{1+ \cdots + 1}_{k-\text{times}}$ there exists a (necessarily unique) solution in $R$ to the equation:
 $$a (a-1)\cdots (a -k+1)= x (k!).$$
 Such a solution is denoted by $\binom{a}{k}$.
 As examples of binomial domains one could mention any characteristic zero field, the ring of integers $\mbb{Z}$, or any subring of the field of rationals $\Q$ (See~\cite{war}).  It is easy to check that being a binomial domain is a first-order property. So any model of the first-order theory $Th(\Z)$ of the ring of integers $\Z$ is also a binomial domain. 
 
 A group $G$ \emph{admitting exponents in a binomial domain $A$} is a nilpotent group $G$ together with a function:
 $$G\times A\rightarrow G, \quad (x,a)\mapsto x^a,$$
 satisfying the following axioms:
 \begin{enumerate}
  \item $x^1=x$, $x^a x^b=x^{(a+b)}$, $(x^a)^b=x^{(a b)}$, for all $x\in G$ and $a, b\in A$.
  \item $(y^{-1}xy)^{a}=y^{-1}x^a y$ for all $x,y\in G$ and $a\in A$.
  \item  $x_1^a x_2^a\cdots x_n^a=(x_1x_2\cdots x_n)^a\tau_2(\bar{x})^{\binom{a}{2}}\cdots \tau_c(\bar{x})^{\binom{a}{c}}$, for all $x_1$, \ldots, $x_n$ in $G$, $a \in A$, where the terms $\tau_i$ come from Hall-Petresco formula and $c$ is the nilpotency class of $G$.
 \end{enumerate}
These are also referred to as \emph{$A$-exponential groups} or  \emph{$A$-groups} in the literature. Homomorphisms of $A$-groups, $A$-subgroups, and similar notions are defined in the obvious manner. We say that a group $G$ is a \emph{finitely generated nilpotent $A$-group} if $G$ is a nilpotent $A$-group which is finitely generated as an $A$-group. We refer the reader to either of \cite{hall} or \cite{war} for more details regarding $A$-groups.  

Model-theoretically a nilpotent $A$-group $G$ can be considered as a pure group, i.e. as a structure with language $L$, or as a two-sorted structure $G_{A}=\langle A, G, s\rangle$ similar to two-sorted modules. We denote the corresponding language by $L_1$. Indeed even a finitely generated nilpotent group $G$ can be considered as a two-sorted structure $G_{\Z}$. Again our first aim in this paper is to understand to what extend $G_{\Z}$ can be recovered from $G$.

 \subsection{Bilinear maps} 
 Assume $M_1$, $M_2$ and $N$ are $A$-modules, where $A$ is a commutative associative ring with unit. The map 
 $$f:M_1\times
M_2\rightarrow N$$
is called $A$-bilinear if 
$$f(ax,y)=f(x,ay)=af(x,y)$$ 
for all $x\in M_1$, $y\in M_2$ and $a \in A$.
 An $A$-bilinear mapping $f:M_1\times M_2\rightarrow N$ is called \textit{non-degenerate in the first variable} if $f(x,y)=0$ for all $y$ in $M_2$ implies $x=0$. Non-degeneracy with respect to the second variable is defined similarly. The mapping $f$ is called \textit{non-degenerate} if it is non-degenerate with respect to first and second variables.
We call the bilinear map $f$, a \textit{full bilinear mapping} if $N$ is generated as an $A$-module by elements $f(x,y)$, $x\in M_1$ and $y\in M_2$. 
\subsection{The second cohomology group and extensions of groups}
Here we briefly review a few concepts from cohomology of groups. The aim is to state three well-known facts on the relationship between the second cohomology group and extensions of groups. Readers may refer to (\cite{robin}, Chapter 11) or (\cite{Brown}, Chapter IV) for details.

Assume \begin{equation}
\label{ext1:eqn}1 \rightarrow N\xrightarrow{\mu} E \xrightarrow{\epsilon} G \rightarrow 1,\end{equation}
is a short exact sequence of groups, called an \emph{extension of $N$ by $G$}. The group $N$ is often called the \emph{kernel} of the extension. If all involved groups are abelian then the sequence is called an \emph{abelian extension of $N$ by $G$}. Consider a function $\eta: G \to E$ transversal to $\epsilon$, i.e. $\eta$ satisfies $\epsilon\circ\eta=1_G$. Then $\eta$ gives a function $\chi_{\eta}: G\rightarrow Aut(N)$ induced by the action of $\eta(G)$ on $\mu(N)$ by conjugation in $E$. As usual $Aut(N)$ denotes the group of automorphisms of $N$. Given the data above the result of the action of $x\in G$ on $y\in N$ is denoted by $y^x$.

If $N$ is an abelian group the above action of $G$ on $N$ induces a $\Z[G]$-module structure on $N$, where $\Z[G]$ is the integral group ring of $G$. Given this data we say that $N$ is a \emph{$G$-module}.

Assume that $N$ is a $G$-module and the following is a possibly different extension of $N$ by $G$.
\begin{equation}
\label{ext2:eqn}1\rightarrow N \xrightarrow{\mu'} E' \xrightarrow{\epsilon'} G \rightarrow 1\end{equation}
We say that the extensions \eqref{ext1:eqn} and \eqref{ext2:eqn} are \textit{equivalent} if there exists an isomorphism $\phi:E \rightarrow E'$ of groups such that the diagram:
$$\begin{CD}
1 @>>> N @>{\mu}>> E @>{\epsilon}>> G @>>> 1 \\
@.  @| @VV{\phi}V  @| @. \\
1 @>>> {N} @>>{\mu'}> E' @>>{\epsilon'}> G @>>> 1
\end{CD}$$
commutes. This introduces an equivalence relation on extensions of the $G$-module $N$ by $G$. 

Let $\eta(1_G)=1_E$. A function $f:G\times G \to N$ satisfying:
\begin{itemize}
\item $f(1_G,y)=1_N=f(x,1_G), ~\forall x,y\in G$
\item $f(x,y)f(xy,z)=(f(y,z))^xf(x,yz),~ \forall x,y,z\in G$
\end{itemize} is called a \emph{2-cocycle} from $G$ to $N$ given the action of $G$ on $N$. These 2-cocycles form an abelian group under point-wise multiplication denoted by $Z^2(G,N)$. A 2-cocycle $f$ is called a \emph{2-coboundary} if there is a function $h:G\to N$ satisfying $h(1_G)=1_N$ and 
$$f(x,y)=(h(y))^{x}(h(xy))^{-1}h(x),$$
for all $x,y \in G$. 
2-coboundaries form a subgroup $B^2(G,N)$ of $Z^2(G,N)$. Define \emph{the second cohomology group} given this data by: 
$$H^2(G,N)=\frac{Z^2(G,N)}{B^2(G,N)}.$$
\begin{fact}\label{co-1} If $N$ is a $G$-module then there exists a bijection between the equivalence classes of extensions of $N$ by $G$ realizing the action and the group $H^2(G,N)$. 
\end{fact}
 If the action of $G$ on $N$ is trivial and $G$ is also abelian, then a 2-cocycle $f\in Z^2(G,N)$ is called \emph{symmetric} if $f(x,y)=f(y,x)$ for all $x$ and $y$ in $G$. The subgroup of symmetric 2-cocycles of $Z^2(G,N)$ is denoted by $S^2(G,N)$. We define $Ext(G,N)$ by
 $$Ext(G,N)=\frac{S^2(G,N)}{S^2(G,N)\cap B^2(G,N)}.$$
\begin{fact}\label{co-0} If $N$ is a trivial $G$-module and $G$ is abelian, then there exists a bijection between the equivalence classes of abelian extensions of $N$ by $G$ and the group $Ext(G,N)$.
\end{fact}

 Now assume $N$ is not necessarily abelian. Then the function $\chi_\eta$ induces a homomorphism $\chi: G\rightarrow Out(N)$ called a \emph{coupling} associated to the extension which is independent of $\eta$, where $Out(N)$ denotes the group of outer-automorphisms of $N$.  It is a known fact that there are couplings which can not be realized by any extensions. However given a coupling $\chi$ and knowing that there exists at least one extension realizing it, we may lift $\chi$ to a set map with target $Aut(N)$, then indeed this induces a well-defined homomorphism from $G$ to $Aut(Z(N))$, that is, the center $Z(N)$ of $N$ is actually a $G$-module. 
 
\begin{fact}\label{co} There is a bijection between the equivalence classes of extensions of $N$ by $G$ with coupling $\chi$ and the group $H^2(G,Z(N)),$ providing that such extensions exist, in which case $Z(N)$ is a $G$-module via any lifting of $\chi:G\to Out(N)$ to $Aut(N)$.
\end{fact}
 \subsection{Our approach and statement of the main results}\label{approach:sec}
 
Here we provide an outline of the paper, a lay out of the main arguments and formulate our main results. It is regrettably a long one, even though the authors have tried to keep the details minimal and presentation somehow informal except in certain cases. But hopefully it gives a good outline of the main arguments and results. In later sections of the paper we provide all details, formal description of the constructions, and the proofs. 

Given a finitely generated nilpotent group $G$ our first aim is to construct a central series 
$$G=G_1 > G_2> \ldots > G_m > G_{m+1}=1,$$
where each $G_i$ is first-order definable in $G$ and each quotient $G_i/G_{i+1}$ satisfies one of the following mutually exclusive conditions:
\begin{enumerate}
\item[(a)] $G_i/G_{i+1}$ is finite,
\item[(b)] $G_i/G_{i+1}$ is infinite torsion-free abelian and the two-sorted $\Z$-module $\langle G_i/G_{i+1}, \Z\rangle$ is interpretable in $G$ uniformly with respect to $Th(G)$,
\item[(c)] $G_i/G_{i+1}$ is infinite torsion-free abelian and there is no definable subgroup $K/G_{i+1}$ of $G_i/G_{i+1}$ such that $\langle K/G_{i+1}, \Z\rangle$ is interpretable in $G$.
\end{enumerate}
Let us call such a central series a \emph{maximally refined series}. It may not be very hard to see that if one is able to construct a series for $G$ with quotients satisfying (a) and (b) only, then any finitely generated group $H$ elementarily equivalent to $G$ will be isomorphic to it. That is because we can use the interpretability of the ring of integers and its action or the finiteness of the quotient to express in the first-order theory of groups that each of the quotients $G_i/G_{i+1}$ is generated by certain elements say $u_{ij}G_{i+1}$, $j=1, \ldots, m_i$ for some positive integer $m_i$. This will imply that $G$ is generated by the $u_{ij}$ and it is not hard to see that $G$ being finitely presentable we can also express all relations among the $u_{ij}$ in the first-order theory of groups. Now, all definitions and interpretations being uniform with respect to (at least finitely generated models of) $Th(G)$ implies that $H$ has the same presentation. The existence of quotients of type in (c) and the way the rest of the group interacts with these quotients determines (at-least heuristically) the extent to which the structure of $H$ can deviate from that of $G$. We shall see that the only quotient satisfying (c) in $G$ is $(Is(G')\cdot Z(G)) / Is(G')$ or equivalently $Is(G')/(Is(G')\cap Z(G))$, where $Is(G')$ refers to the isolator of the commutator subgroup $G'$. Note that if $G$ is abelian then one could get only quotients of type (a) and (c). In this case one can use Szmielew's invariants to show that $H$ and $G$ are isomorphic. There is an example, due to Zilber~\cite{Z71}, of a class 2 finitely generated nilpotent group $G$ such that $G$ has a quotient of type (c). The tools developed in this paper shed a new light on this example (which will be studied in the final section).  

So, let us keep constructing a maximally refined series as an ideal goal. With this goal in mind let us describe a few intermediate constructions which will get us there.

Let
$$G=R_1>R_2> \ldots > R_c> R_{c+1}=1,\qquad (R),$$ be an arbitrary central series of the nilpotent group $G$. Let
$$R^{u}_{c+1}=1, \text{and} ~R^u_i=\{x\in G: [x,G]\subseteq [R_i,G] \},\quad 1\leq i\leq c,$$ and
$$ R^l_1=G,\textrm{  and  } R^l_i=[R_{i-1},G], \quad 2\leq i\leq c+1.$$
Each $R^u_i$ and $R^l_i$ is a subgroup of $G$, and
$$R^u_i\geq R_i\geq R^l_i, \quad 1\leq i\leq c,$$
and
$$[R^u_i,G]=[R_i,G]=R^l_{i+1}, \quad 1\leq i\leq c.$$
Hence
$$G=R^u_1> R^u_2> \ldots > R^u_c=Z(G)>R^u_{c+1}=1, \quad (R^u)$$
$$G=R^l_1> R^l_2=G' > R^l_3> \ldots >R^l_{c+1}=1, \quad (R^l)$$
are central series for $G$. We call the series $(R^u)$ and $(R^l)$ the upper and
lower series associated with $(R)$, respectively. The operation of commutation induces well-defined full bilinear maps
$$f_i:G/G'\times R^u_i/R^u_{i+1} \rightarrow R^l_{i+1}/R^l_{i+2}, \quad
1\leq i\leq c-1$$
each of which is  non-degenerate with respect to
the second variable.
The mappings constructed form a bundle $S_R=\{f_1, \ldots f_{c-1} \}$ of
bilinear mappings associated with the series $(R)$.

Let
$$R^u=R^u_1/R^u_2\oplus \ldots \oplus R^u_{c-1}/R^u_c,$$
$$R^l=R_2^l/R_3^l\oplus \ldots \oplus R^l_c/R^l_{c+1}.$$
We call each quotient $R^u_i/R^u_{i+1}$ a \emph{major direct factor} of $R^u$. Similarly the $R^l_{i}/R^l_{i+1}$ are called major direct factors of $R^l$
\begin{prop}
\label{FR:prop} The bundle $S_R$ induces a full non-degenerate with respect to both variables bilinear map
$$F_R:\frac{G}{V_R}\times R^u \rightarrow R^l,$$
for some subgroup $V_R \geq G'$.
\end{prop}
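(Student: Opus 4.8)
\emph{Proof strategy.}
The plan is to realize $F_R$ as the ``diagonal'' of the bundle $S_R=\{f_1,\dots,f_{c-1}\}$: since the $i$-th summand $R^u_i/R^u_{i+1}$ of $R^u$ and the $i$-th summand $R^l_{i+1}/R^l_{i+2}$ of $R^l$ are precisely the second source and the target of $f_i$, I would put
\[
F_R\colon \frac{G}{G'}\times R^u\longrightarrow R^l,\qquad
F_R\bigl(\bar g,(\bar x_1,\dots,\bar x_{c-1})\bigr)=\bigl(f_1(\bar g,\bar x_1),\dots,f_{c-1}(\bar g,\bar x_{c-1})\bigr),
\]
which is bilinear because each $f_i$ is and because the map is coordinatewise in the second argument (which ranges over the direct sum $R^u$). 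Then I would take $V_R$ to be the preimage in $G$ of the left kernel of this map and verify the three asserted properties; the only non-formal point is that $G'\subseteq V_R$, which is what lets $F_R$ descend to the abelian group $G/V_R$.

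\textbf{The subgroup $V_R$.} Set $V_R=\{g\in G:[g,R^u_i]\subseteq R^l_{i+2}\text{ for }1\le i\le c-1\}$. For a fixed $x\in R^u_i$ the commutator $[g,x]$ lies in $[R^u_i,G]=R^l_{i+1}$, and since $(R)$ is central one has $[R^l_{i+1},G]=[[R_i,G],G]\subseteq[R_{i+1},G]=R^l_{i+2}$; hence $g\mapsto[g,x]R^l_{i+2}$ is a homomorphism $G\to R^l_{i+1}/R^l_{i+2}$, so $V_R$, being the intersection of the kernels of all such homomorphisms over $i$ and $x$, is a subgroup. By construction $g\in V_R$ iff $f_i(\bar g,\cdot)=0$ for every $i$, i.e.\ iff $F_R(\bar g,\cdot)=0$.

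\textbf{Why $G'\subseteq V_R$ (the crux).} This is exactly the well-definedness of the $f_i$ in their first argument already recorded in the setup; concretely it follows from the Hall--Witt three subgroups lemma applied with $X=Y=G$, $Z=R^u_i$ and normal subgroup $N=R^l_{i+2}$: since $[[Z,X],Y]=[R^l_{i+1},G]\subseteq N$ and $[[Y,Z],X]=[[R^u_i,G],G]\subseteq N$ (again using centrality of $(R)$), the lemma yields $[G',R^u_i]=[[X,Y],Z]\subseteq N=R^l_{i+2}$ for all $i$, i.e.\ $G'\subseteq V_R$. Hence $V_R$ is normal, contains $G'$, and $F_R$ factors through $G/V_R\times R^u\to R^l$.

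\textbf{Non-degeneracy and fullness.} The induced map on $(G/V_R)\times R^u$ is non-degenerate in the first variable by the choice of $V_R$ as the left kernel. It is non-degenerate in the second variable coordinatewise: if $(\bar x_1,\dots,\bar x_{c-1})$ annihilates $G/V_R$, then each $\bar x_i$ annihilates $G/G'$ (the projection $G/G'\to G/V_R$ being onto and $f_i$ factoring through it), so $\bar x_i=0$ as $f_i$ is non-degenerate in its second variable. Finally $F_R$ is full: its image contains every tuple $(0,\dots,f_i(\bar g,\bar x_i),\dots,0)$, and these generate $R^l$ precisely because each $f_i$ is full onto $R^l_{i+1}/R^l_{i+2}$. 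The only real obstacle here is isolating $V_R$ correctly --- large enough (it must contain $G'$, which is where the three subgroups lemma and the centrality of $(R)$ enter) so that $F_R$ lives on an abelian group, yet small enough (just the left kernel) to retain non-degeneracy; everything else is routine manipulation of direct sums and kernels.
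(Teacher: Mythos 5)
Your proof is correct and follows essentially the same route as the paper: you take $V_R=\bigcap_i V_i$ to be the common left kernel of the bundle, define $F_R$ coordinatewise from the $f_i$, and check well-definedness, non-degeneracy and fullness exactly as in the paper's construction in Section~\ref{bilincon}. Your appeal to the Hall--Witt three subgroups lemma for $[G',R^u_i]\subseteq R^l_{i+2}$ is just a packaged form of the paper's chain of inclusions $[[G,G],R^u_i]\subseteq[[R^u_i,G],G]=[[R_i,G],G]\subseteq[R_{i+1},G]=R^l_{i+2}$, so there is no substantive difference.
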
 
 The details of the construction and the proof of the proposition can be found in Section~\ref{bilincon}.

Now one can associate a (necessarily commutative associative unitary) ring $P(F_R)$, called the largest ring of scalars of the mapping $F_R$
(see Section~\ref{bilin}) to $F_R$. The ring $P(F_R)$ may not act on all the major direct factors of $R^u$ and $R^l$. But the largest subring $P_R$ of $P(F_R)$ that stabilizes these major direct factors exists and moreover is definable in $P(F_R)$.  So the abelian groups $G/V_R$,
$R^u_i/R^u_{i+1}$ and $R^l_{i+1}/R^l_{i+2}$, $1\leq i \leq c-1$ carry  $P_R$-module structures interpretable in $G$. 

The argument above shows that the ring $P_R$ acts on all the quotients of the upper series except, possibly, the one corresponding to the lowest gap $R^u_c=Z(G)> 1$. It also acts on all the quotients of the lower series except possibly on the quotient of the upper gap $G> G'
=R^l_2$.

 In Section~\ref{coord:sec} we shall introduce refinements $(U(R))$ and $(U(L))$ of $(R^l)$ and $(R^u)$, respectively, that minimize this issue. We shall also construct a definable subring $A_R$ of $P_R$ that acts simultaneously on all quotients of the consecutive terms of the two new series except possibly for the quotient associated to one gap in each case. In the case of $(U(R))$ the gap is $Z(G)> Z(G)\cap G'$ and in the case of $(L(R))$ the gap is $Z(G)\cdot G'>G'$. Notice that if $Z(G)\leq G'$ the gaps are trivial. If they exist they give isomorphic quotients. So we refer to both gaps as the \emph{special gap}.        

The importance of all these constructions for studying the elementary theory of $G$ is reflected in the following proposition. 

\begin{prop}\label{prop64} Assume $A$ is an arbitrary binomial domain. Let $G$ be a finitely generated $A$-exponential nilpotent group  and 
$$G=R_c>R_{c-1}>\ldots R_1>R_0=1 \qquad (R)$$
be a central series of $A$-subgroups where each term is definable in $G$ uniformly with respect to $Th(G)$ . Then,
\begin{enumerate}
\item the upper completed series $(U(R))$  associated with $(R)$ is an $A$-central series and all its terms are interpretable in $G$ uniformly with respect to $Th(G)$,
\item the lower completed series $(L(R))$  associated with $(R)$ is an $A$-central series and all its terms are interpretable in $G$ uniformly with respect to $Th(G)$,
\item the ring $A_R(G)$ is an $A$-algebra, and it is interpretable in $G$ uniformly with respect to $Th(G)$, moreover:
\begin{itemize} 
\item[(a)] the action of $A_R(G)$ on all quotients of the upper completed associated series $(U(R))$ (maybe except the gap $Z(G)\geq G'\cap Z(G))$ is interpretable in $G$ uniformly with respect to $Th(G)$,
\item[(b)] the action of $A_R(G)$ on all quotients of the lower completed associated series $(L(R))$ (maybe except the gap $Z(G)\cdot G' >G'$) is interpretable in $G$ uniformly with respect to $Th(G)$.\end{itemize}\end{enumerate}\end{prop}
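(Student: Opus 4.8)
The plan is to obtain all the required interpretations by composing several interpretations, each one read off from an explicit construction, so that uniformity with respect to $Th(G)$ is passed along at every step. First I would note that the series $(R^u)$ and $(R^l)$ associated with $(R)$ are uniformly definable in $G$: for any definable $H\leq G$ the subgroup $[H,G]$ is definable, uniformly in $Th(G)$, because in a finitely generated nilpotent group the commutator word has finite width, bounded in terms of the number of generators and the nilpotency class, so $[H,G]$ is defined by a formula valid in every model of $Th(G)$; hence each $R^l_i=[R_{i-1},G]$ is uniformly definable, and then $R^u_i=\{x\in G:[x,g]\in R^l_{i+1}\text{ for all }g\in G\}$ is uniformly definable as well. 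Consequently every group occurring in the bundle $S_R$ and in Proposition~\ref{FR:prop} is interpretable (a quotient of a definable group by a definable normal subgroup), each $f_i$ is interpretable since commutation is a group term, and $V_R=\{x\in G:[x,R^u_i]\subseteq R^l_{i+2}\text{ for all }i\}$ is definable, so the full, doubly non-degenerate bilinear map $F_R:G/V_R\times R^u\to R^l$ of Proposition~\ref{FR:prop} is interpretable in $G$ uniformly with respect to $Th(G)$, together with the decompositions of $R^u$ and $R^l$ into the summands $R^u_i/R^u_{i+1}$ and $R^l_{i+1}/R^l_{i+2}$.

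I would then feed $F_R$ into the machinery of Section~\ref{bilin}. The ring $P_R=P(F_R)$, the largest ring of scalars of $F_R$, lives inside a product of endomorphism rings of $G/V_R$, $R^u$ and $R^l$ cut out by the compatibility identities with $F_R$; by the results of Section~\ref{bilin} it is interpretable in $G$ uniformly with respect to $Th(G)$, along with its action on $G/V_R$, $R^u$ and $R^l$, and hence on each summand $R^u_i/R^u_{i+1}$ and $R^l_{i+1}/R^l_{i+2}$. I expect the genuine difficulty to be concentrated here: a priori the largest ring of scalars is a quantification over endomorphisms, and making it first-order, producing concrete codes for its elements, and checking that those codes and the action remain valid in every model of $Th(G)$ --- not only in $G$ itself --- is exactly the point where uniformity must be proved rather than observed. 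Within the present proposition this reduces to the two facts that $F_R$ is interpretable and that it meets the hypotheses of Section~\ref{bilin}, both supplied by Proposition~\ref{FR:prop}.

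The three numbered claims then follow by running the above through the constructions of Section~\ref{coord:sec}. The completed series $(U(R))$ and $(L(R))$ are built from the associated series and the $P_R$-module structure by inserting, between consecutive terms, the preimages of the distinguished $P_R$-submodules of the gaps, each described by a first-order condition over the already-interpreted data; so all their terms are interpretable uniformly with respect to $Th(G)$, which gives (1) and (2). The ring $A_R(G)$ is defined from $P_R$ by an explicit prescription --- the subring acting coherently on all the refined gaps --- hence interpretable uniformly, which is the opening clause of (3). For (3a) and (3b): apart from the gap $Z(G)\geq G'\cap Z(G)$ of $(U(R))$ and the gap $Z(G)\cdot G'/G'$ of $(L(R))$, every gap of a completed series is a $P_R$-subquotient of one of $G/V_R$, $R^u$, $R^l$, and there the $A_R(G)$-action is just the restriction of the already-interpreted $P_R$-action. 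The two exceptional gaps make up $Z(G)\cap G'$, the part of the centre that the non-degenerate $F_R$ does not already separate (its left radical $V_R$ contains $G'$); they are isomorphic, and together they form the \emph{special gap}. The remaining, purely bookkeeping, task, which I would carry out last, is to confirm that a completed series leaves no gap other than this one uncovered --- equivalently, that its own associated upper and lower series coincide with it except at the special gap. (The further passage from the $A_R(G)$-action to the $\Z$-action on a maximally refined series, which is outside this proposition, is where a classical definability theorem of Julia Robinson enters.)
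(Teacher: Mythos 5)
Your proposal is correct and follows essentially the same route as the paper, whose proof of Proposition~\ref{prop64} is simply ``similar to Proposition~\ref{prop63}'': uniform definability of the terms via the finite-width argument (Lemma~\ref{lem61} with Proposition~\ref{prop61}), interpretability of the quotients and of $F_R$, Theorem~\ref{ringinter} for the largest ring of scalars, and the observation that the refinements and the subrings $PL_R$, $AE_R$, $AD_R$, $A_R$ of Section~\ref{coord:sec} are cut out by first-order conditions over the already-interpreted data. (Only your aside that the exceptional gaps ``make up $Z(G)\cap G'$'' is a slip --- the special quotient is $Z(G)/(Z(G)\cap G')\cong Z(G)\cdot G'/G'$ --- but this is descriptive and does not affect the argument.)
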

The constructions are introduced in Section~\ref{coord:sec}. The logical properties of the constructions are studied in Section~\ref{logical:sec}. 

\begin{rem} As starting point for building a maximally refined series for $G$ one can choose any central series with good logical properties, such as the lower central series or the upper central series for $G$. However the existence of the gaps appearing in 3(a) and 3(b) of Proposition~\ref{prop64} and the isomorphism type of the corresponding quotients are independent of the original series $(R)$. That is why we call it the special gap.\end{rem}       

We would have already achieved the goal of constructing a maximally refined series if we could somehow interpret (define) the ring of integers $\Z$ in the ring $A_R(G)$. This is the topic we discuss in Section~\ref{Z-interpret:sec}. By construction the ring $A_R(G)$ is a subring of the endomorphism ring of $G/V_R(G)$ which is a finitely generated abelian group. Therefore it is an example of a ring $A$ with finitely generated additive group $A^+$. It is also a Noetherian ring. Let us recall that any such ring admits a decomposition of zero
$$0=\p_1\cdot \p_2 \cdots \p_m,$$ 
where the $\p_i$ are certain prime ideals of $A$. 

\begin{prop}\label{informal-coord:prop} Let $A$ be a commutative associative ring with unit and finitely generated additive group $A^+$. Then,
\begin{enumerate}
\item[(a)] there exists a definable decomposition of zero $0=\p_1\cdot \p_2 \cdots \p_m,$
in $A$,
\item[(b)] for each $i$, either the characteristic, char$(A/\p_i)$, of the integral domain $A/\p_i$ is not zero and so $A/\p_i$ is finite or char$(A/\p_i)=0$ and the ring of integers $\Z$ is definable in $A/\p_i$. 
\end{enumerate}
\end{prop} 
Part (a) is actually the statement of Proposition~\ref{primary:thm} and part (b) is a corollary of Lemma~\ref{julia:lem} and part (a). These are informal versions of the more technical statements appearing in  Sections~\ref{Prime-decom:sec} and \ref{elem-eq-rings:sec}. Indeed for what follows we need a stronger statement which is Proposition~\ref{basedef}. Let us just comment on part (b), which uses the celebrated theorem of Julia Robinson on undecidability of algebraic number fields of finite degree over $\Q$ in \cite{julia}. She proves that the ring of integers $\Z$ is interpretable in any algebraic number field, that is, a finite extension of the field of rational numbers $\Q$. Note that if $A$ is a characteristic zero integral domain with finitely generated $A^+$ then the field of fractions $F$ of $A$ is an algebraic number field. It is an easy exercise to show that $F$ is interpretable in $A$ uniformly with respect to $Th(A)$. Now the statement follows from Robinson's theorem.

Now combining Proposition~\ref{prop64} and \ref{informal-coord:prop} modulo certain technicalities one may prove the following proposition. We will provide the proof at the beginning of Section~\ref{main:sec}. 

\begin{prop}\label{max-refined:prop} Assume $G$ is a finitely generated nilpotent group which is not abelian-by-finite. then, at least two of the quotients of consecutive terms $(U(R))$ (respectively $(L(R))$) is infinite. Moreover the ring of integers $\Z$ and its action on all the infinite quotients of the consecutive terms of $(U(R))$ (respectively ($L(R)$)), except possibly $Z(G)/(Z(G)\cap G')$ (respectively $(Z(G)\cdot G')/G'$), are interpretable in $G$.  All the interpretations are uniform with respect to finitely generated models of $Th(G)$.  \end{prop}

It should not come as surprise that we also obtain a result on elementary equivalence of finitely generated modules over commutative associative rings with unit and finitely generated additive group. We recall that for us an $A$-module $M$ is a two sorted structure $\langle M, A, s \rangle$, where $M$ is  an abelian group, $A$ is a ring and $s$ is the predicate describing the action of $A$ on $M$. Denote the language by $L_2$. 

 \begin{thm}\label{elemmod:thm}Let $A$ be an associative commutative ring with unit and let $M$ be a finitely generated $A$-module. Then
 there exists a sentence $\psi_{M,A}$ of the language $L_2$ such that $\langle M,A\rangle\models \psi_{M,A}$ and for
 any ring $B$ with finitely generated $B^+$ and any finitely generated $B$-module $N$, we
 have
 $$\langle N,B\rangle\models \psi_{M,A} \Leftrightarrow \langle N,B\rangle
 \cong \langle M,A\rangle.$$\end{thm} 
 
 The proof of the theorem appears at the end of Section~\ref{Z-interpret:sec}. Indeed Theorem~\ref{elemmod:thm} implies the next two statements.  
 \begin{cor} For any associative commutative ring $A$ with unit and
finitely generated additive group $A^+$ there exists formula $\psi_A$
such that $A\models \psi_A$ and for any ring $B$ with finitely generated $B^+$ we have
$$B\models \psi_A \Leftrightarrow A\cong B.$$\end{cor}
\begin{cor}Let $\mathcal{K}$ be the class of all associative
commutative rings with finitely generated additive group. Then any $A$
from $\mathcal{K}$ is finitely axiomatizable inside
$\mathcal{K}$.\end{cor}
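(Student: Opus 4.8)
The plan is to read this off the preceding corollary. That corollary furnishes, for every $A\in\mathcal{K}$, a single first-order sentence $\psi_A$ of the language of rings with the property that for \emph{every} ring $B$ whose additive group $B^+$ is finitely generated one has $B\models\psi_A$ if and only if $B\cong A$. I claim that $\psi_A$ by itself axiomatizes $A$ relative to $\mathcal{K}$: for all $B\in\mathcal{K}$ one has $B\models\psi_A\iff B\equiv A$, so the relative theory of $A$ inside $\mathcal{K}$ is generated by the single sentence $\psi_A$, which is exactly what "finitely axiomatizable inside $\mathcal{K}$" means.

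First I would record that $A\models\psi_A$: apply the preceding corollary with the test ring taken to be $A$ itself, giving $A\models\psi_A\iff A\cong A$, and the right-hand side is trivially true. Hence $\psi_A\in Th(A)$, so any ring (whether or not it lies in $\mathcal{K}$) that is elementarily equivalent to $A$ satisfies $\psi_A$; this is the easy implication $B\equiv A\Rightarrow B\models\psi_A$. For the converse, let $B\in\mathcal{K}$ and suppose $B\models\psi_A$. Because $B^+$ is finitely generated, the preceding corollary applies to $B$ and forces $B\cong A$, whence $B\equiv A$. Combining the two implications yields the claimed equivalence.

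There is no real obstacle: all the content has already been established upstream, in Theorem~\ref{elemmod:thm} and the preceding corollary, which themselves rest on the uniform interpretation of $\Z$ in such rings (Lemma~\ref{julia:lem}, via Julia Robinson's theorem) together with the first-order definability of a $\Z$-basis and its structure constants. The one point worth emphasizing — more a caveat than a difficulty — is why the relativization to $\mathcal{K}$ is essential and cannot be dropped: "$B^+$ is finitely generated" is not a first-order property (it is not preserved under ultrapowers), so $\mathcal{K}$ is not an elementary class and $A$ is in general \emph{not} finitely axiomatizable among all commutative rings; the hypothesis $B\in\mathcal{K}$ is precisely what makes the preceding corollary applicable to $B$. (Minor bookkeeping: that corollary is stated for unital rings, so $\mathcal{K}$ is here understood to consist of commutative rings with unit, in keeping with the rest of the paper.)
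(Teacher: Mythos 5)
Your argument is correct and is exactly the deduction the paper intends (it omits the proof as an "obvious consequence"): the sentence $\psi_A$ from the preceding corollary satisfies, for every $B\in\mathcal{K}$, $B\models\psi_A\iff B\cong A\iff B\equiv A$, so it finitely axiomatizes $Th(A)$ relative to $\mathcal{K}$. Your remarks on $A\models\psi_A$ and on why the relativization to $\mathcal{K}$ cannot be dropped are accurate and consistent with the paper's use of Theorem~\ref{elemmod:thm} and Lemma~\ref{julia:lem}.
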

Let us denote by $L_3$ the first-order language of two-sorted algebras. An algebra $\langle  C, A\rangle$ consists of an arbitrary ring $C$, not necessarily commutative or associative or unitary and the scalar ring $A$ which is assumed to be commutative associative and unitary The following result follows from the statements above. We provide the proof right after the proof of Theorem~\ref{elemmod:thm} in Section~\ref{Z-interpret:sec}.  
\begin{thm}\label{elem-iso-alg:thm} Let $\mathcal{A}$ be the class of all two-sorted algebras $\langle C, A\rangle$ where $C$ is finitely generated as an $A$-module and $A^+$ is finitely generated as an abelian group. For each  $\langle C, A\rangle\in \mathcal{A}$ there exists a formula $\phi_{C,A}$ of $L_3$ such that $\langle C, A\rangle\models \phi_{C,A}$ and for any $\langle D, B\rangle \in \mathcal{A}$,
 $$\langle D, B\rangle\models \phi_{C,A} \Leftrightarrow \langle C,A\rangle \cong \langle D,B\rangle$$
 as two-sorted algebras.    \end{thm}

Going back to the discussion of nilpotent groups, in some sense Proposition~\ref{max-refined:prop} finishes constructing maximally refined series. In the remaining parts of the paper we shall look at the consequences of having such a series and why it is in general the best we can achieve.

 In Section~\ref{coordinatization} we present a coordinatization theorem for finitely $K$-generated groups taking exponents in a binomial principal ideal domain $K$. Recall that elements in a finitely generated torsion-free nilpotent group $G$ can be represented as tuples in $\Z^m$, where $m$ is the Hirsch number of $G$. By a classical theorem of P. Hall~\cite{hall} product and exponentiation are computed by certain polynomials with rational coefficients and integer values on integer entries. Moreover these polynomials are uniquely determined by the so-called structure constants. Here we do not assume that the groups are torsion-free. So we need more complicated but basic tools and language to describe such a group as a tuple in $K^m$ where its isomorphism type is determined by structure constants and periods of (relative) torsion elements. 
 
Section~\ref{regular:sec} introduces terminology related to the ``special gap'' and also a class of finitely generated nilpotent groups, here called regular groups, where elementary equivalence implies isomorphism. In Section~\ref{regular:sec} we actually deal with a somewhat bigger class of groups (taking exponents in a binomial PID), but for now we just restrict our attention to finitely generated nilpotent groups. So let $G$ be such a group. If $N$ is a subgroup of $G$, define
 $$Is(N)=\{x\in G:\exists n \in \N^*~ (x^n\in N)\}.$$
The set $Is(N)$ is a subgroup of $G$ called the \emph{isolator of $N$} in $G$, which is normal in $G$ if $N$ is a normal subgroup of $G$. Next Define
 $$I(G)=Is(G')\cap Z(G).$$
 The special gap
 \begin{equation*}Z(G)\geq G' \cap Z(G)\end{equation*}
 will be called \textit{tame}, if $Z(G)=I(G)$.
 
 Let us refine the special gap above with the help of $I(G)$:
 $$Z(G)\geq I(G)\geq G'\cap Z(G).$$
 The quotient $I(G)/(G'\cap Z(G))$ is a finite abelian group, and the quotient $Z(G)/I(G)$ is a free abelian group of finite rank. Indeed $Z(G)$ splits over $I(G)$ and therefore there exists a free abelian group of finite rank $G_0\leq Z(G)$, such that $Z(G)=
 G_0\times I(G)$.
 
 Any subgroup $G_0\leq G$ such that
 $Z(G)=G_0\times I(G)$ is called an \emph{addition of $G$} and the quotient group $G_f=G/G_0$
 is called a \emph{foundation of $G$}, associated with the addition
 $G_0$. If $G\cong G/G_f\times G_0$ or equivalently if $Is(G'\cdot Z(G))=Is(G')\cdot Z(G)$ then we call $G$ a \emph{regular} group. The fact that the two conditions given above are equivalent is part of the statement of Proposition~\ref{rproperties:prop}. 

Now let us describe the main results of Section~\ref{main:sec}. This section reflects the meticulous study of what having a ``maximally refined central series'' for a finitely generated nilpotent group $G$ means in understanding the structure of a finitely generated group $H$ elementarily equivalent to it. The main auxiliary results in the section are Lemma~\ref{mainlem} and Proposition~\ref{maincor}. The statements are technical and long, so we do not state them here. Some of the main consequences are summarized in the following theorem. 
\begin{thm}\label{prop:embed} Assume $G\equiv H$ are finitely generated nilpotent groups. Then
\begin{enumerate}
\item[(a)] there exists a monomorphism $\phi:G\to H$ of groups with $im(\phi)$ a finite index subgroup of $H$,
\item[(b)] for any addition $G_0$ of $G$ there exists an addition $H_0$ of $H$ so that $G_0\cong H_0$ and $G/G_0\cong H/H_0$,
\item[(c)] $Is(G')\cong Is(H')$
\item[(d)] $G/Is(G') \cong H/Is(H')$
\item[(e)] $Is(G'\cdot Z(G))/(Is(G')\cdot Z(G)) \cong Is(H'\cdot Z(H))/(Is(H')\cdot Z(H))$.
\end{enumerate}
\end{thm}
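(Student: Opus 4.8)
The plan is to run the coordinatization of Proposition~\ref{prop64} on a central series with good logical properties, transfer elementary equivalence through all the resulting interpretable structures, recover isomorphisms of the pieces via Theorem~\ref{elemmod:thm}, and then reassemble the groups along the interpretable bilinear maps.

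First I would fix a central series $(R)$ of $G$ whose terms are uniformly $Th(G)$-definable --- for a finitely generated nilpotent group one may take the lower central series --- and apply Proposition~\ref{prop64} to $G$ and, since $Th(G)=Th(H)$, to $H$ as well. This makes interpretable in each group, uniformly over the common theory: the completed series $(U(R))$ and $(L(R))$, the ring $A_R(-)$, the action of $A_R(-)$ on every gap of these two series except the special gap $Z(-)\geq (-)'\cap Z(-)$, and, by Proposition~\ref{FR:prop}, the bilinear maps $f_i$ and $F_R$. Uniformity over the theory forces each interpreted structure inside $G$ to be elementarily equivalent to its counterpart inside $H$; so $A_R(G)\equiv A_R(H)$ and, for every non-special gap $P/Q$, the two-sorted module $\langle P/Q,A_R(G)\rangle$ is elementarily equivalent to the corresponding $H$-module. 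Since $A_R(G)$ is a commutative Noetherian ring lying inside a finite product of endomorphism rings of finitely generated abelian groups, $A_R(G)^+$ is finitely generated and $r(A_R(G))<\infty$, so Theorem~\ref{elemmod:thm} applies: it yields an isomorphism $A_R(G)\cong A_R(H)$ together with isomorphisms of all non-special gaps as two-sorted modules, compatibly with the interpretable bilinear maps.

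Next I would treat the special gap by hand. As $Is(G')$ is $0$-definable, so are $I(G)=Is(G')\cap Z(G)$ and $G'\cap Z(G)$; the finite group $I(G)/(G'\cap Z(G))$ is then interpretable, hence $\cong I(H)/(H'\cap Z(H))$, while $Z(G)/I(G)$ is interpretable free abelian of finite rank, and rank is an elementary invariant, so it equals $\operatorname{rk}(Z(H)/I(H))$; thus the special gaps agree \emph{as abelian groups}, though not a priori as $A_R$-modules. From this several parts drop out at once: $G/G'$ is interpretable, so $G/G'\equiv H/H'$, hence $G/G'\cong H/H'$ and $G/Is(G')\cong H/Is(H')$, which is (d); $Is(G')\cdot Z(G)$ and $\isg$ are $0$-definable, so the finitely generated abelian group $\isg/(Is(G')\cdot Z(G))$ is interpretable and hence an invariant of the elementary type, which is (e); and the splitting $Z(G)=G_0\oplus I(G)$ together with $I(G)\cong I(H)$ and the equality of complement ranks produces an addition $H_0$ of $H$ with $G_0\cong H_0$, the first half of (b).

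The remaining content --- (a), (c), and $G/G_0\cong H/H_0$ in (b) --- needs the groups to be reassembled from the recovered pieces, and this is where I expect the real work. The plan is an induction down the completed series: with $G/G_{i+1}\cong H/H_{i+1}$ and the interpretable cocycle data governing the extension by the next gap matched up, one matches $G/G_i$ with $H/H_i$; the definable subgroup $Is(G')$, the quotient $G/Is(G')$, and the foundation $G_f=G/G_0$ are handled the same way, their induced series having --- once the finite piece $I(G)/(G'\cap Z(G))$ is isolated --- only gaps carrying interpretable $A_R(G)$-module structure. For (a) I would invoke the coordinatization of Section~\ref{coordinatization}: $G$ is presented by finitely many generators with structure constants and torsion periods in $A_R(G)$; the sentence of $Th(G)$ asserting existence of such a configuration holds in $H$ through $A_R(H)\cong A_R(G)$ and the interpreted modules; and the resulting homomorphism $G\to H$ is injective (a nontrivial element has a nonzero coordinate in some recovered gap) and of finite index (the data is weaker only at the special gap, where merely the abelian-group structure and the torsion-free rank are controlled, costing at most a finite amount). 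The hard part, and the heart of Lemma~\ref{mainlem} and Theorem~\ref{maincor}, is carrying this induction through the special gap: showing that the discrepancy between an $A_R(G)$-module isomorphism (which one cannot get there) and an abelian-group isomorphism (which one can) is confined to finite pieces, so that the glued map is a monomorphism of finite index and the definable sub- and quotient groups come out genuinely isomorphic. Alternatively one could read (a) off from Oger's isomorphism $G\times\Z\cong H\times\Z$ by a direct index computation, but the reconstruction argument is what produces (b)--(e) along with it.
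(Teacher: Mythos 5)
Your proposal is correct and takes essentially the paper's own route: the paper proves this theorem precisely by transferring, via the first-order formula of Lemma~\ref{mainlem}, the pseudo-basis data of $G$ (length, periods, and all structure constants except those tied to the addition, where only maximal linear independence over $\Z$ is expressible), extracting through Theorem~\ref{maincor} a matching configuration in $H$, and mapping pseudo-basis to configuration to obtain the finite-index monomorphism $\phi$, from which (a)--(d) follow, while (e) and $G_0\cong H_0$ come from uniform interpretability exactly as you argue (Corollary~\ref{gq}, Lemma~\ref{bigseries}). The only overstatement is that Theorem~\ref{elemmod:thm} yields gap isomorphisms ``compatibly with the interpretable bilinear maps'' --- it gives each two-sorted module isomorphism separately --- but your argument never actually uses that compatibility, since the reassembly runs through the transferred structure constants.
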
 

The most immediate consequence of Theorem~\ref{prop:embed} is the following theorem on elementary equivalence of regular groups. This theorem was announced in \cite{M1984} but the proof was not published.  

\begin{thm}[\cite{M1984}]\label{regular:prop} If $G$ is finitely generated regular group and $H$ is a finitely generated group such that $G\equiv H$, then $G\cong H$.\end{thm}

In Section~\ref{elemcohom:sec} we give a cohomological account of the results in Section~\ref{main:sec}. Let us set $N(G)=Is(G')\cdot Z(G)=Is(G')\times G_0$, $M(G)=Is(G'\cdot Z(G))$ and $\bar{G}= G/N(G)$. We look at $G$ as the following extension where $\mu$ is the inclusion and  $\pi$ is the canonical surjection.
\begin{equation*} 1\rightarrow N(G) \xrightarrow{\mu} G \xrightarrow{\pi} \bar{G} \rightarrow 1.\end{equation*}
Recall that all the equivalence classes of the extensions of $N(G)$ by $\Bar{G}$ with the same coupling, $\chi: \bar{G}\to Out(N(G))$, as this extension are in one-one correspondence with elements of the second cohomology group $H^2(\bar{G}, Z(N(G)))$ where the center $Z(N(G))$ of $N(G)$ is a $G$-module via any lifting of $\chi$ to $Aut(N(G))$. We notice that picking an addition $G_0$ for $G$, the properties $N(G)=G_0 \times Is(G')$ and  $G_0\subset Z(G)$ imply that $Z(N(G))=G_0\times N_1(G)$ where both $G_0$ and $N_1(G)=Z(Is(G'))$ are clearly stable under the action of $G$. We will argue that
 \begin{equation*}\begin{split}
 H^2(\bar{G}, Z(N(G)))&\cong H^2(\bar{G}, N_1(G))\oplus Ext(\frac{M(G)}{N(G)}, G_0)\\
    [f]&\mapsto [f_1] \oplus [f_2],\end{split}\end{equation*}
where $f$, $f_1$ and $f_2$ are 2-cocycles and $[f]$ denotes the class of $f$ in the corresponding cohomology group. This splitting as well as the main results from Section~\ref{main:sec} imply that any finitely generated group $H$ elementarily equivalent to $G$ is realized as an extension of $N(G)$ by $\bar{G}$ with the same coupling as the extension above. If the 2-cocycle corresponding to $G$ (as a representative of an equivalence class) has the form  $f_1+f_2$ then the one corresponding to $H$ has the form $f_1+f_2'$, where the symmetric 2-cocycles $f_2$ and $f_2'$ belong to possibly different classes of $Ext(M(G)/N(G), G_0).$ We can actually say more about the 2-cocycle $f_2'$.

We will define now what an abelian deformation of a finitely generated nilpotent group is. It might help the reader to at least read the statements of Lemmas~\ref{sp1},~\ref{mainlem} and~\ref{stb} before going through the definition. The indices $i_0,i_1$ and $i_2$ used in the definition are defined in Lemma~\ref{sp1}, even though it should be clear from the definition what they are.    
 
  \begin{defn}[Cohomological definition of finitely generated abelian deformations] \label{abdefcohom:defn} Assume $G$ is a finitely generated nilpotent group corresponding to an element  
 $$[f_1]\oplus [f_2] \in H^2(\bar{G}, N_1(G))\oplus Ext(\frac{M(G)}{N(G)}, G_0)\cong H^2(\bar{G}, Z(N(G))),$$
 realizing a coupling $\chi: G\to Out(N(G)).$ Assume 
 $$\frac{M(G)}{N(G)}=\langle u_{i_0+1}N(G)\rangle \times\cdots\times \langle u_{i_1}N(G)\rangle\cong \frac{\Z^+}{e_{i_0+1}\Z^+}\oplus\cdots\oplus \frac{\Z^+}{e_{i_1}\Z^+},$$
 is the invariant factor decomposition of the finite abelian group $M(G)/N(G)$. Set $e=e_{i_0+1}\cdots e_{i_1}$ and $n=i_1-i_0$. Moreover let $\{u_{i_1+1},\ldots ,u_{i_1+n}, \ldots, u_{i_2}\}$ be a basis for the addition $G_0$ as a free abelian group. Let also $p=i_2-i_1$. Assume we are given integers $d_{i_1+1}, \ldots , d_{i_1+n}$ and an $n\times n$ matrix  $(c_{ij})$ of integers such that 
  \begin{enumerate}
 \item[(a)] gcd$(d,e)=1$ where $d=d_{i_1+1}\cdots d_{i_1+n}$
 \item[(b)] $det(c_{ij})=\pm 1$.
 \end{enumerate}
 Define the group Abdef$(G,\bar{d},\bar{c})$ as the extension of $N(G)$ by $\bar{G}$ with the same coupling $\chi:G\to Out(N(G))$ defining $G$, corresponding to $[f_1]\oplus[f'_2]$, where 
  $$f'_2=\sum_{i=i_0+1}^{i_1} f'_{2i},\qquad  f'_{2i}\in S^2(\langle u_iN(G)\rangle , G_0),$$
 and for $0\leq s,t<e_i$,
 \begin{equation*}
 f'_{2i}(u_i^{s}N(G), u_i^tN(G)) = \left\{
 \begin{array}{lr}
 1 & \text{if }~~ s+t<e_i\\
 \prod_{k=i_1+1}^{i_1+n} u_k^{d_kc_{ik}} & \text{   if }~~ s+t\geq e_i.
 \end{array} \right.
 \end{equation*}
 Any group $H$ isomorphic to Abdef$(G,\bar{d},\bar{c})$ is called an abelian deformation of $G$.
 \end{defn}  

Here is the main theorem of this paper.

\begin{thm}[Characterization Theorem] \label{mainthm}
Assume $G$ is a finitely generated nilpotent group and $H$ is a finitely generated group such that $G \equiv H$. Then there exist tuples of integers $\bar{d}$ and $\bar{c}$, satisfying the conditions in Definition~\ref{abdefcohom:defn} such that
$$ H\cong \text{Abdef}(G,\bar{d},\bar{c}).$$  
\end{thm}
The proof of the theorem is included in Section~\ref{elemcohom:sec}. The converse is proven in Section~\ref{converse:sec}.

Finally in Section~\ref{Zilber:sec} we analyze an example given by B. Zilber~\cite{Z71} of two finitely generated 2-nilpotent groups which are elementarily equivalent but not isomorphic. We will show that they can be looked as abelian deformations of one another.
 
\section{Bilinearization of nilpotent groups}\label{bilinsec}
In this section we define the non-degenerate bilinear mapping $F_R$,
which is canonically associated with an arbitrary
central series $(R)$
 of a nilpotent group $G$. In a certain sense, the mapping $F_R$ is a
generalization of the graded Lie
ring associated with the central series $(R)$ of $G$. 

All the results and definitions in Subsection \ref{bilin} and \ref{largest-ring:sec} are taken from \cite{alexei86}. The rest of the material in this section is new to the best of our knowledge.
\subsection{Largest ring of a bilinear map} \label{bilin}
In this section all the modules are considered to be exact and scalar rings are always commutative associative with a unit. An $A$-module $M$ is said to be \emph{exact} if
 $am=0$ for $a\in A$ and all $m\in M$ implies $a=0$. 
Let $f:M_1\times M_2\rightarrow N$ be a non-degenerate full $A$-bilinear mapping for some ring $A$.

Let $M$ be an $A$-module and let $\mu:A\rightarrow P$ be an inclusion of rings. Then the $P$-module $M$ is an
\textit{$P$-enrichment} of the $A$-module $M$ with respect to $\mu$ if for every $a\in A$ and $m \in M$,
$am=\mu(a)m$. Let us denote the set of all $A$ endomorphisms of the $A$-module $M$ by $End_A(M)$. Suppose the
$A$-module $M$ admits a $P$-enrichment with respect to the inclusion of rings $\mu:A\rightarrow P$. Then every
$\alpha\in P$ induces an $A$-endomorphism, $\phi_{\alpha}:M\rightarrow M$ of modules defined by
$\phi_{\alpha}(m)=\alpha m$ for $m \in M$. This in turn induces an injection $\phi_P:P\rightarrow End_A(M)$ of
rings. Thus we associate a subring of the ring $End_A(M)$ to every ring $P$ with respect to which there is an
enrichment of the $A$-module $M$.

\begin{defn}Let $f:M_1\times M_2\rightarrow N$ be a full $A$-bilinear
mapping and $\mu:A\rightarrow P$ be an inclusion of rings. The mapping $f$ admits $P$-enrichment with
respect to $\mu$ if the $A$-modules $M_1$, $M_2$ and $N$ admit $P$ enrichments with respect to $\mu$ and $f$ remains
bilinear with respect to $P$. We denote such an enrichment by $E(f,P)$.\end{defn} 

We define an ordering $\leq$
on the set of enrichments of $f$ by letting $E(f,P_1)\leq E(f,P_2)$ if and only if $f$ as an $P_1$ bilinear
mapping admits a $P_2$ enrichment with respect to inclusion of rings $P_1\rightarrow P_2$. The largest
enrichment $E_H(f,P(f))$ is defined in the obvious way. We shall prove existence of such an enrichment for a
large class of bilinear mappings. 

The following proposition and its proof are taken from~\cite{alexei86}. We provide the proof here since the ring $P(f)$, whose construction is revealed in the proof, shall be used in constructing various other rings introduced later in the paper.  
\begin{prop}[\cite{alexei86}, Theorem 1] \label{P(f)}If $f:M\times M \rightarrow N$ is a non-degenerate
full $A$-bilinear mapping over a commutative associative ring $A$ with unit, then $f$ admits the largest enrichment.\end{prop}

\begin{proof}
An $A$-endomorphism $\alpha$ of the $A$-module $M$ is called \textit{symmetric} if $$f(\alpha x,y)=f(x,\alpha y)$$ for every $x,y
\in M$. Let us denote the set of all such endomorphisms by $Sym_f(M)$, i.e.

$$Sym_f(M)=\{\alpha\in End_A(M):f(\alpha x,y)=(x,\alpha y),\quad \forall x,y
\in M \}.$$

 Set $$Z=\{\beta \in Sym_f(M):\alpha \circ \beta= \beta\circ\alpha,\quad \forall \alpha \in Sym_f(M)\}.$$ Then $Z$ is non-empty since the unit 1 belongs to $Z$ and it is actually an $A$-subalgebra of $End_A(M)$. For each $n$, let $Z_n$
be the set of all endomorphisms $\alpha$ in $Z$ that satisfy the formula
\begin{equation*}\begin{split}
S_n(\alpha)\Leftrightarrow & \forall \bar{x},\bar{y},\bar{u},\bar{v}
\left(\sum_{i=1}^nf(x_i,y_i)=\sum_{i=1}^{n}f(u_i,v_i) \rightarrow\right. \\
& \left.\sum_{i=1}^nf(\alpha x_i,y_i)=\sum_{i=1}^{n}f(\alpha u_i,v_i) \right).
\end{split}\end{equation*}
i.e.
\begin{equation*}
Z_n=\{\alpha\in Z: S_n(\alpha)\}.\label{zn}
\end{equation*}
Each $Z_n$ is also  an $R$-subalgebra of $Z$. Now set $$P(f)=\cap_{i=1}^\infty Z_n.$$ The identity mapping is in
every $Z_n$ so $P(f)$ is not empty. Since the mapping $f$ is full, for every $x\in N$ there are $x_i$ and
$y_i$, in $M$ such that $x=\sum_{i=1}^nf(x_i,y_i)$ for some $n$. The $P(f)$-module $M$ is exact by construction. Now we can define the action
of $P(f)$ on $N$ by setting $\alpha x=\sum_{i=1 }^nf(\alpha x_i,y_i)$. The action is clearly well-defined since $\alpha$
satisfies all the $S_n(\alpha)$ and makes $N$ into a $P(f)$-module. Moreover for any $x,y\in M$ and $A\in P(f)$ we have $$f(\alpha x,y)=f(x,\alpha y)=\alpha f(x,y),$$
that is, $f$ is $P(f)$-bilinear.

In order to prove that the ring $P(f)$ is the largest ring of scalars, we prove that for any ring $P$ with respect to which $f$ is bilinear, $\phi_P(P) \subseteq P(f)$. Since $f$ is $P$-bilinear $\phi_P(P) \subseteq
Sym_f(M)$. Let $p \in P$ then for $\alpha\in Sym_f(M)$ and $x,y \in M$,
\begin{equation*}\label{myrem}\begin{split}
f(\alpha \circ\phi_{p}(x),y)&=f(\phi_{p}(x),\alpha y)\\
&=\alpha f(x,\alpha y)=\alpha f(\alpha x,y)\\
&= f(\phi_{p}\circ \alpha (x),y).
\end{split}\end{equation*}
Non-degeneracy of $f$ implies that $\phi_{p}\circ \alpha=\alpha\circ \phi_{p}$. Therefore $\phi_P(P)\subseteq
Z$. It is clear that $\phi_{p}$ belongs to every $Z_n$ by bilinearity of $f$ with respect to $P$. Therefore
$\phi_P(P)\subseteq P(f)$, hence $E(f,P)\leq E(f,P(f))$.
\end{proof}

\subsection{Largest ring of scalars as a logical invariant}\label{largest-ring:sec}
Indeed the ring $P(f)$ is interpretable in the bilinear map $f$ providing that $f$ satisfies certain conditions in addition to the ones in Proposition~\ref{P(f)}. 

The mapping $f$ is said to have \textit{finite width}\index{bilinear mapping!of finite width} if there is a natural number $s$ such that for every $u\in
N$ there are $x_i\in M_1$ and $y_i\in M_2$ such that
$$u=\sum_{i=1}^sf(x_i,y_i).$$
The least such number, $w(f)$\index{ $w(f)$}, is the \textit{width} of $f$.

A set $E_1=\{e_1,\ldots e_n\}$ is a \textit{left complete system} for a non-degenerate mapping $f$ if $f(E_1,y)=0$ implies $y=0$. The
cardinality of a minimal left complete system for $f$ is denoted by $c_1(f)$. A right complete system and the number $c_2(f)$ are defined correspondingly.

The \textit{type} of a bilinear mapping $f$, denoted by $\tau(f)$ \index{ $\tau(f)$}, is the triple $$(w(f),c_1(f), c_2(f)).$$ The mapping $f$ is
said to be of finite type if  $w(f)$, $c_1(f)$ and $c_2(f)$ all exist. If $f,g:M_1\times M_2 \rightarrow N$ are bilinear maps of finite type we say that the type of $g$ is less than the type of $f$ and write $\tau(g)\leq \tau(f)$ if $w(g)\leq w(f)$, $c_1(g)\leq c_1(f)$ and $c_2(g)\leq c_2(f)$.

Let $A$ be a commutative ring with unit. Assume $M_1$, $M_2$ and $N$ are exact $A$-modules. Let $f:M_1\times M_2\rightarrow N$ be a $A$-bilinear map. We associate two structures to $f$. The first one is
$$\mathfrak{U}(f)=\langle M_1,M_2,N, \delta\rangle.$$
where $M_1$, $M_2$ and $N$ are abelian groups and $\delta$ describes the bilinear map. The other one is  $$\mathfrak{U}_A(f)=\langle A,M_1,M_2,N,\delta,s_{M_1},s_{M_2}, s_N\rangle,$$ where $A$ is a ring and $s_{M_1}$, $s_{M_2}$ and $s_N$ describe the actions of $A$ on the modules $M_1$, $M_2$
and $N$ respectively.

We state the following theorem without proof. Readers may refer to the cited reference for a proof. 

\begin{thm}[\cite{alexei86}, Theorem 2]\label{ringinter}Let $f$ be a non-degenerate full bilinear mapping of finite type and let $P(f)$ be the largest ring of scalars of $f$. Then all the constructions, $Sym_f(M)$, $Z$, and $Z_n$ for all $n\in \N^*$ are regularly interpretable in $\mathfrak{U}(f)$. In particular $P(f)=\cap_{n=1}^{w(f)}Z_n$ where $w(f)$ is the width of $f$ and therefore $\mathfrak{U}_{P(f)}(f)$
 is absolutely interpretable in $\mathfrak{U}(f)$. Moreover the formulas involved in the interpretation depend only on the type of $f$.\end{thm}
 
\begin{rem}\label{M1notM2} Proposition~\ref{P(f)} and Theorem~\ref{ringinter} are stated for bilinear maps $f:M_1\times M_2\to N$ where $M_1=M_2$. In the following we reduce the general case $M_1\neq M_2$ to the case in these statements. So assume $f$ is full and non-degenerate. Now define 
$$f^p: (M_1\oplus M_2) \times (M_1\oplus M_2) \to N\oplus N$$
by 
$$f^p(x_1\oplus y_1, x_2 \oplus y_2)=f(x_1,y_2) \oplus f(x_2,y_1),$$
for all $x_i \in M_1$ and $y_i\in M_2$, $i=1,2$. The map is clearly non-degenerate and full. So by Proposition~\ref{P(f)} the largest ring $P=P(f^p)$ of scalars exists. Define $P_1$ as the subset of all elements of $P$ stabilizing both $M_1$ and $M_2$. Indeed $P_1$ is a non-empty subring of $P$ and makes $f$ a $P_1$-bilinear map.  On the other hand any ring $R$ making $f$ bilinear has to embed into $P_1$ otherwise $P(f^p)$ would not define the largest enrichment. So indeed $P(f)=P_1$.

Moreover $f^p$ is absolutely interpretable in $f$. Indeed in this case $M_1$ and $M_2$ are absolutely definable subgroups of $M_1\oplus M_2$ and so the ring $P_1$ is absolutely definable in $P(f^p)$. Therefore $P_1$ is interpretable in $f$. \end{rem}
 \begin{rem} If one only cares about algebraic properties of $P(f)$ there is a simpler definition of $P(f)$. Given a full nondegenerate $R$-bilinear map 
 $$f:M_1\times M_2\to N,$$ one can identify $P(f)$\index{ $P(f)$} with the subring $S\leq End_R(M_1)\times End_R(M_2)\times End_R(N)$ consisting of all triples  $A=(\phi_1,\phi_2,\phi_0)$ such that for all $x\in M_1$ and $y\in M_2$ 
  \begin{equation}\label{myrem1}f(\phi_1(x), y)=f(x,\phi_2(y))=\phi_0(f(x,y)).\end{equation}
  Moreover one could drop the reference to $R$ and simply take $P(f)$ to be the subring $S'$ of $End(M_1)\times End(M_2)\times End(N)$ whose elements satisfy~\eqref{myrem1}. This is simply because $E(f,P(f))\leq E(f,S')$, while $E(f,P(f))$ is the largest enrichment.\end{rem}
 
\subsection{Construction of $F_R$}\label{bilincon}
Throughout $G$ is a nilpotent group. 

Let
$$G=R_1>R_2> \ldots > R_c> R_{c+1}=1,\qquad (R),$$ be an arbitrary central series of the nilpotent group $G$. Let
$$R^u_{c+1}=1, \text{ and } R^u_i=\{x\in G: [x,G]\subseteq [R_i,G] \} \text{ if }  1\leq i\leq c,$$ and
$$ R^l_1=G,\textrm{ and } R^l_i=[R_{i-1},G] \text{ if } 2\leq i\leq c+1.$$
Clearly each $R^u_i$ and $R^l_i$ is a subgroup of $G$, and
$$R^u_i\geq R_i\geq R^l_i, \quad 1\leq i\leq c+1,$$
and
$$[R^u_i,G]=[R_i,G]=R^l_{i+1}, \quad 1\leq i\leq c.$$
Hence
$$G=R^u_1> R^u_2> \ldots > R^u_c=Z(G)> R^u_{c+1}=1 \quad (R^u),$$
$$G=R^l_1> R^l_2=G' > R^l_3> \ldots >R^l_{c+1}=1 \quad (R^l),$$
are central series of $G$.
\begin{defn}The series $(R^u)$ is called the upper central series associated to $(R)$ and $(R^l)$ is called the lower central series associated to $(R)$.\end{defn}
For each $1\leq i \leq c-1$ we may define a full bilinear mapping,
$$f_i:G/G'\times R^u_i/R^u_{i+1} \rightarrow R^l_{i+1}/R^l_{i+2},$$
by $f_i(xG',yR^u_{i+1})=[x,y]R^l_{i+2}$. Each $f_i$, $1\leq i\leq c-1$ is well defined due to the inclusions:
\begin{equation*}\begin{split}
[G',R^u_i]&=[[G,G],R^u_i]\subseteq [[R^u_i,G],G]\\
&=[[G,R^u_i],G]=[[R_i,G],G]\subseteq [R_{i+1},G]\\
&= R_{i+2}^l.\end{split}\end{equation*}
Note that $f_i$ is non-degenerate with respect to the second variable due to the definition of $R^u_{i+1}$. The
mappings constructed form a bundle $S_R=\{f_1, \ldots f_{c-1} \}$ of
bilinear mappings associated with the series $(R)$.

Let
$$V_i(R)=\{x\in G: [x,R^u_i]\subseteq R^l_{i+2}\}$$ be the kernel of $f_i$
with respect to the first variable. Let
$$V_R=\bigcap_{i=1}^{c-1}V_i(R)\geq  G'\cdot  Z(G),$$
$$R^u=R^u_1/R^u_2\oplus \ldots \oplus R^u_{c-1}/R^u_c,$$
$$R^l=R_2^l/R_3^l\oplus \ldots \oplus R^l_c/R^l_{c+1}.$$
We now introduce a full non-degenerate with respect to both variables
 bilinear mapping
$$F_R:G/V_R\times R^u \rightarrow R^l,$$
which canonically corresponds to the bundle $S_R$ and is defined
according to the rule:
$$F_R(x, \sum_{i=1}^{c-1}x_i)=\sum_{i=1}^{c-1}f_i(x_0,x_i),$$
where $x\in G/V_R$, $x_i\in R^u_i/R^u_{i+1}$ and  $x_0$ is an arbitrary
pre-image of $x$ in $G/G'$.

The full non-degenerate bilinear map $F_R$ constructed above will be referred to as \emph{the bilinear mapping associated with the series $(R)$}.

By Theorem~\ref{P(f)} the largest ring of scalars $P(F_R)$ of the bilinear map $F_R$ exists. By construction $P(F_R)$ acts on $G/V_R$, $R^u$ and $R^l$. But it does not necessarily stabilize the major direct factors $R^u_i/R^u_{i+1}$ and $R^l_{i+1}/R^l_{i+2}$, for $1\leq i\leq c-1$. However it is not hard to see that the largest subring of $P(F_R)$ that stabilizes all such quotients is non-empty since it contains the unit 1. So we define $P_R$ as \emph{the largest subring of $P(F_R)$ which stabilizes all $R^u_i/R^u_{i+1}$ and $R^l_{i+1}/R^l_{i+2}$, for $1\leq i\leq c-1$}. 

Let us consider some examples.

\begin{exmp} Consider the lower central series
$$G=\Gamma_1(G)> \ldots > \Gamma_{c}(G)>\Gamma_{c+1}(G)=1,\qquad (\Gamma)$$ of a nilpotent group
$G$. Then we have $\Gamma^l_i(G)=\Gamma_i(G)$. So the action of the ring
$P_{\Gamma}$ on all the quotients
$$\Gamma_2(G)/\Gamma_3(G), \ldots, \Gamma_c(G)/1=\Gamma_c(G)$$
of the lower central series is defined. Note that $G/\Gamma_2(G)$ is excluded from the list above.\end{exmp}
\begin{exmp}For the upper central series
$$G=Z_c(G)> \ldots > Z_1(G)=Z(G)> 1, \qquad (Z)$$
we have $Z^u_i(G)=Z_i(G)$, and the action of the ring $P_Z$ on the quotients
$$Z_c/Z_{c-1}, \ldots , Z_2/Z_1$$
is defined i.e. on all the quotients of the upper central series except the
center $Z(G)$.\end{exmp}
\begin{exmp}\label{2nil}Let $G$ be 2-nilpotent.
\begin{enumerate}
\item From the upper central series
$$G=Z_2>Z_1=Z(G)>1$$
we obtain $Z^u_i=Z_i$, $Z_1^l=G'$, $V_Z=Z(G)$. Therefore $F_Z$ is the
standard bilinear mapping,
$$F_Z:G/Z(G)\times G/Z(G)\rightarrow G'.$$
\item For the lower central series
$$G=\Gamma_1(G)>\Gamma_2(G)=G'>1$$
we obtain $\Gamma^l_i(G)=\Gamma_i(G)$, $\Gamma^u_2(G)=Z(G)$ and
$V_{\Gamma}=Z(G)$, hence
$$F_{\Gamma}:G/Z(G)\times G/Z(G)\rightarrow G'$$
and $F_{\Gamma}=F_Z$.\end{enumerate}
Note that in both cases above the action of the ring
$P_{\Gamma}=P_Z$ is defined on all the quotients except the quotient
$Z(G)/G'$ (if it is non-trivial). \end{exmp}
\begin{lem}\label{Rulbinom:lem} Assume $A$ is a binomial domain, $G$ is a nilpotent $A$-group and the central series $(R)$ is an $A$-series. Then both of the series $(R^l)$ and $(R^u)$ are central $A$-series. Moreover $P_R$ is an $A$-algebra.\end{lem}
\begin{proof}
We first prove that $R^l_i$ are $A$-subgroups. Firstly by Lemma 10.5 of~\cite{war} the commutator map 
$$ R_{c-1}\times G  \to R_c, \qquad(x,y)\mapsto  [x,y]$$
is an $A$-homomorphism in both variables since $R_c\leq Z(G)$. This shows that $R^l_{c}=[R_{c-1},G]$ is an $A$-subgroup. Now fix $1\leq i\leq c-1$ and assume for all $i+1\leq j \leq c$, $R^l_{j}$ is an $A$-subgroup. Now consider the commutator map:
$$ \frac{R_{i-1}}{R^l_{i+1}}\times \frac{G}{R^l_{i+1}}  \to \frac{R_i}{R^l_{i+1}}, \qquad(xR^l_{i+1},yR^l_{i+1})\mapsto  [x,y]R^l_{i+1}.$$
Note that by Lemma~10.6 of \cite{war} and the induction hypothesis all the groups involved in the definition of the map are $A$-subgroups of $G/R^l_{i+1}$. Apply the argument above to the image $R^l_{i}/R^l_{i+1}$ of the map to show that it is an $A$-subgroup of $G/R^l_{i+1}$. This implies that $R^l_{i}$ is an $A$-subgroup by induction. 

To prove that the $R^u_{i}$ are $A$-subgroups we just need to note that 
$$\frac{R^u_{i}}{R^l_{i+1}}=Z\left(\frac{G}{R^l_{i+1}}\right).$$ 
Since the center of an $A$-group is an $A$-subgroup and we proved above that all the $R^l_{i}$ are $A$-subgroups the result follows by induction. 

It remains to prove that $V(R)$ is an $A$-subgroup. By definition $V_i(R)$ is the kernel of the bilinear map $f_i$. By above $f_i$ is $A$-bilinear and $V_i(R)/G'$ is the kernel with respect to the first variable. So for each $V_i(R)/G'$ is an $A$-group, therefore $V_i(R)$ is an $A$-subgroup since $G'$ is so. Hence $V(R)=\cap_{i=1}^{c-1}V_i(R)$ is an $A$-subgroup. In particular this shows that $G/V(R)$ is an $A$-group.     

Indeed we proved above that $F_R$ is an $A$-bilinear map.  This proves that both $P(F_R)$ and $P_R$ are $A$-algebras since by definition they include $A$ as a subring.
 	\end{proof}
\begin{rem} The ring $P_R$ may not be a binomial domain even if $G$ is a nilpotent $A$-group. For example consider the group $G=UT_3(\Z\times \Z)$ of $3\times 3$ upper unitriangular matrices over the ring $\Z\times \Z$. It is easy to verify that $G$ is a 2-nilpotent group with $\Gamma_2=Z_1$. Here $P_\Gamma=P_Z=\Z\times \Z$ while $\Z\times \Z$ is not a domain. Also $UT_3(\Z[\sqrt{2}])$ is a 2-nilpotent group while the domain $P_\Gamma=\Z[\sqrt{2}]$ is not even 2-binomial (consider $\frac{\sqrt{2}(\sqrt{2}-1)}{2}$). \end{rem}
\subsection{Coordinatization of the action of the ring of scalars}\label{coord:sec}

Assume $A$ is a binomial domain. For a nilpotent $A$-group $G$ the action of the ring $A$
on various quotients of $G$ is coordinated, i.e. if $H_1\unlhd H_2$ are
$A$-subgroups of $G$, then the canonical epimorphism $H_2\rightarrow
H_2/H_1$ is an $A$-epimorphism. In this section we build subrings of
the ring $P_R$, which satisfy this property wherever the action of $P_R$ is defined. 

Denote by $L_R$ the set of canonical homomorphisms
$$e_i:R^l_{i}/R^l_{i+1}\rightarrow R^u_{i}/R^u_{i+1},$$induced by the
inclusions $R^l_i\rightarrow R^u_i$. There exists the largest subring
$PL_R$ of the ring $P_R$ (with respect to inclusion) such that all
homomorphisms of $L_R$ are $PL_R$-linear. 

Finally let us build a ring which acts simultaneously on quotients of some more central series,
for instance the upper and lower central series. Let
$$G=Q_1 > \ldots  > Q_d > Q_{d+1}=1, \qquad (Q)$$
be one more central series of the group $G$. Consider the bilinear maps, naturally defined using the commutation operation,  
$$g_i:\frac{G}{G'} \times \frac{Q^{u}_i}{Q^u_{i+1}} \to \frac{Q^l_{i+1}}{Q^l_{i+2}},$$ and let $S_Q$ be the set of all these maps. Define $Q^u$ and $Q^l$ similar to $R^u$ and $R^l$. Let $V_{R\cup Q}=V_R\cap V_Q$ and define 
$$F_{R\cup Q}: \frac{G}{V_{R\cup Q}}\times \left(R^u \oplus Q^u\right) \to R^l\oplus Q^l,$$
by $$F_{R\cup Q}(x, \sum_{i=1}^{c-1}x_i + \sum_{j=1}^{d-1}y_j) =\sum_{i=1}^{c-1}f_i(x_0,x_i) + \sum_{j=1}^{d-1}g_j(x_0,y_j),$$ where $x\in G/V_{R\cup Q}(G)$, $x_i\in R^u_i/R^u_{i+1}$, $y_j\in Q^u_j/Q^u_{j+1}$ and $x_0$ is an arbitrary pre-image of $x$ in $G/G'$. Needless to say that $F_{R\cup Q}$ is a full non-degenerate bilinear map. So the largest ring $P(F_{R\cup Q})$ with respect to which $F_{R\cup Q}$ is bilinear exists and similarly one can construct the largest subring $P_{R\cup Q}$ of  $P(F_{R\cup Q})$ which acts on all major direct factors of $R^u$, $Q^u$, $R^l$ and $Q^l$. Finally one can construct the subring $PL_{R\cup Q}$ similar to $PL_R$ above.

The two constructions introduced above can be easily generalized to an
arbitrary set $C$ of central series of the group $G$ and an
arbitrary set $L$ of canonical homomorphisms between the quotients of
series associated with $C$. In fact, let $S_R$ be the
bundle of mappings corresponding to the series $(R)$ from
$C$. Let $S_C=\bigcup_{R\in C}S_R$ and
$F_C$ be the corresponding bilinear mapping. Denote by
$PL_C\leq P_C$ the largest subring
(with respect to inclusion) such that all homomorphisms from
$L$ are $PL_{C}$-linear. The ring
$PL_C$ is a convenient tool because it allows
one to pass freely in reasoning from one series to another (in
$C$), specially when refining a series by means of another one is required.
\begin{prop}\label{noether-plc:prop} Let $G$ be finitely generated nilpotent $A$-group over a
Noetherian binomial domain $A$. Let $C$ be an arbitrary set of central
$A$-series of $G$ and $L$ an arbitrary subset of canonical homomorphisms
between quotients of series associated with $C$. Then
$PL_C$ is a finite dimensional commutative
associative $A$-algebra with a unit.\end{prop}
\begin{proof}By construction (see Section~\ref{bilin})
$PL_C$ is an $A$-subalgebra of the algebra of
$A$-endomorphisms $End_{A}(G/V_C)$ of the $A$-module
$G/V_{C}$. According to the construction
$V_C\geq G'$ and $G/G'$ is a
finitely generated $A$-group by hypothesis. Consequently the $A$-module
$G/V_C$ is finitely generated, so is the $A$-module
$End_A(G/V_C)$. Moreover this module is Noetherian since
$A$ is a Noetherian ring. Hence $PL_C$ is a
finitely generated $A$-module.

\end{proof}
\subsection{Maximally refined series}\label{cs}
We note that the ring $P_R$ acts exactly on all the quotients of the
upper series except on the one corresponding to the lowest gap $R^u_c=Z(G)> 1$, and on all
the quotients of the lower series except on the quotient of the upper gap $G> G'
=R^l_2$. On the other hand, $P_R$ acts on the
quotient $G/V_R$. Since $V_R \geq  G'\cdot  Z(G)$, then in the case of $Z(G)
\nleq G'$ there always remains a gap, $ G'\cdot Z(G)>G'$, on the
quotient of which the ring $P_R$ does not act. Similarly in a
2-nilpotent group $G$ if the gap $Z(G)\geq G'$ is nontrivial
($Z(G)\neq G'$), then the action of the ring $P_R$ on the quotient
$Z(G)/G'$ is undefined (see Example~\ref{2nil}).
In this section we introduce a simple construction which allows one to
maximally refine the series $(R^u)$ and $(R^l)$ so that the ring
$P_R$ acts on all the quotients of the refined series, except the
fixed special gap (if it is not trivial). For the refinement of
$(R^u)$ the special gap is $Z(G)\geq Z(G)\cap G'$, and for the
refinement of $(R^l)$ it is $G'\cdot Z(G) \geq G'$. Note that the
quotients of special gaps are isomorphic in the two cases. The special
gaps are essential and there is no way to get rid of them.

So let us start the refining process. We refine the series
$(R^u)$ in the last term $Z(G)>1$ with the help of the intermediate
terms $R^l_i\cap Z(G)$, $i=2,\ldots, c$, to get 
$$Z(G) \geq G'\cap Z(G)= R^l_2\cap Z(G)\geq \ldots \geq R^l_c\cap Z(G)\geq 1.$$
Thus we obtain \emph{the refined upper series (U(R)) associated with $(R)$}:
$$G=R^u_1\geq R^u_2 \geq \ldots \geq Z(G)\geq G'\cap Z(G) \geq \ldots \geq 1
\quad (U(R)).$$
Consider the set $E=\{ \epsilon_i: i=2, \ldots , c \}$ of the
canonical monomorphisms:
$$\epsilon_i: T_i= (R^l_i\cap Z(G))/(R^l_{i+1}\cap Z(G)) \rightarrow
R^l_i/R^l_{i+1}=S_i,$$
and denote by $AE_R$ the largest subring of $PL_R$, which leaves all
the images $\epsilon_i(T_i)$ in the $PL_R$-modules $S_i$ invariant. Then the action of $AE_R$
on $S_i$ induces an action of $AE_R$ on $T_i$ with the help of the
monomorphisms $\epsilon_i$ according to:
\begin{equation}\label{ActInvDefn}
\alpha x=_{\text{def}}\epsilon^{-1}(\alpha \epsilon(x)), \quad \alpha \in AE_R,  x\in
T_i.
\end{equation}

In this way, an action of the ring $AE_R$ is defined on all the
quotients of the series $(U(R))$, except on $Z(G)\geq
Z(G)\cap G'$, if it is non-trivial.

Dually starting from the lower series $(R^l)$ associated with $(R)$, we construct
\emph{the refined lower series $(L(R))$ associated with $(R)$}. Namely we refine the
series
$$G> V_R \geq G'=R^l_2\geq \ldots \geq R^l_c\geq R^l_{c+1}=1$$ in the gap $V_R\geq
G'$ by means of the upper associated series, i.e. we insert the series
$$V_R=(V_R\cap R^u_1)\cdot G'\geq (V_R\cap R^u_2)\cdot G'\geq\ldots \geq (V_R\cap R^u_c)\cdot G'=Z(G)\cdot G'.$$
As a result $(L(R))$ is of the form
\begin{align*}G\geq V_R \geq (V_R\cap R^u_2)\cdot G' &\geq \ldots \geq Z(G)\cdot G'\\
&\geq G' =R^l_2 \geq \ldots \geq R^l_c\geq R^l_{c+1}=1
\quad (L(R)).\end{align*}
For each $i$ consider the canonical epimorphism: 
$$\beta_i: X_i=(V_R\cap
R^u_i)/(V_R\cap R^u_{i+1}) \to ((V_R\cap R^u_i)\cdot G')/((V_R\cap R^u_{i+1})\cdot G')=Y_i,$$ and also consider the natural monomorphisms  $$\delta_i:(V_R\cap
R^u_i)/(V_R\cap R^u_{i+1})\to R^u_i/R^u_{i+1}=Z_i.$$ 

Denote by $AD_R$ the largest subring of $PL_R$, which leaves
all the images $\delta_i(X_i)$ in $Z_i$ invariant. Then one can use a definition similar to \eqref{ActInvDefn} to define an action of $AD_R$ on the $X_i$. Then the $Y_i$ can be turned into $AD_R$-modules via the $\beta_i$. Like $AE_R$, the
ring $AD_R$ acts on all the quotients of the series $(L(R))$,
except $Z(G)\cdot G'/ G'$, if it is non-trivial.

Let $A_R=AE_R \cap AD_R$. Then the ring $A_R$ acts, via the constructions above, on all the
quotients of the series $(U(R))$ and $(L(R))$ except the ones corresponding to the special
gaps. Moreover in both cases the special gaps give the same quotients,
$$(Z(G)\cdot G')/G'\cong Z(G)/(Z(G)\cap G')$$ up to isomorphism.

We close the section with the following proposition. Its proof uses Lemma~\ref{Rulbinom:lem} and is similar to that of Proposition~\ref{noether-plc:prop} and is omitted here.

\begin{prop}\label{ringf}Let $A$ be binomial domain and let $G$ be a nilpotent $A$-group. If $(R)$ is a central series of $A$-subgroups of
$G$, then all the terms of $(L(R))$ and $(U(R))$ are $A$-subgroups and $A_R$ is an associative commutative $A$-algebra with a unit. If in addition $A$ is Noetherian and $G$ is finitely generated as an $A$-group, then $A_R$ is finite dimensional as an $A$-algebra. \end{prop}

\section{ Some logical invariants of finitely generated nilpotent groups}\label{logical:sec}

In this section we study the logical properties of the constructions introduced in the previous section. In particular we will show that all the constructions associated with the bilinearization of $G$, are definable by formulas of the language of groups. Most of the material here is folklore in the context of finitely generated nilpotent groups. We consider the slightly more general case of finitely generated nilpotent $A$-groups where $A$ is a binomial domain. 

\subsection{Uniform interpretability of the bilinearization}

In this subsection it will be shown that bilinearization of an arbitrary finitely generated nilpotent group $G$ is  absolutely interpretable in the group $G$ by a system of formulas of the signature of groups. Besides, the same formulas absolutely interpret bilinearization in any group $H$ which is elementary equivalent to $G$. In other words, bilinearization in the group $G$ is a logical invariant of $G$. Moreover, this is true for arbitrary finitely generated $A$-groups over a binomial domain $A$, e.g.  for finitely generated nilpotent pro-$p$-groups, unipotent $k$-groups over a field $k$ of zero characteristic. Most of the arguments here are quite well-known and standard. 

 Below we will prove that $G'$ as well as other verbal subgroups of a finitely generated $A$-group $G$ are definable in $G$. Definability of verbal subgroups of a finitely generated $A$-exponential nilpotent group $G$ is related to the notion of finite width. Assume a subgroup $N$ of the group $G$ is generated by the set $X$. We say that $N$ is of \textit{width} $s$ (with respect to $X$), if any element of $N$ can be represented as a product of no more than $s$ elements of the set $X$ and their inverses, $s$ being the minimal number with this property. The width of the subgroup $N=\langle X\rangle$ will be denoted by $s_X(N)$ or simply $s(N)$.

Let $v(x_1,\ldots , x_n) = v(\bar{x})$ be a group word. The set
$$V= \{v(\bar{g}): \bar{g}\in G^n\}$$ is called a \textit{value set} of $v$ in $G$. The subgroup $v(G)$ generated by the set $V$ is called a \textit{verbal subgroup}, defined by the word $v$. By the width of the subgroup $v(G)$ we mean the width with respect to the value set $V$.

Let $\varphi(x)$ be a formula of the language of groups. Consider the functor $f_{\varphi}$ which associates to any group $G$ its subgroup $f_{\varphi} (G)$, generated by the definable subset
$$\varphi(G)=\{g\in G :  G\models \varphi(g)\}.$$

\begin{prop} \label{prop61}Let $G$ be a group. If the subgroup $f_{\varphi}(G)$ is of finite width, then $f_{\varphi}(G)$ is definable in $G$ uniformly with respect to $Th(G)$.\end{prop}
\begin{proof} Let $s$ be the width of the subgroup $f_{\varphi}(G)$ with respect to $\varphi(G)$. Then the subgroup $\varphi(G)$ is defined in $G$ by the formula:
$$\Phi_s(x) = \exists x_1,\ldots , \exists x_s (x=\prod_{i=1}^s x_i\wedge \bigwedge^s_{i=1}(\varphi(x_i) \vee \varphi(x_i^{-1}))).$$
The group $G$ and hence any group $H\equiv G$ satisfies the sentence
$$\forall x(\Phi_{s+1}(x)\rightarrow \Phi_{s}(x)),$$
which allows to contract any product of elements of the set $\varphi(G) \cup \varphi(G)^{-1}$ to a product of no more than $s$ factors of $\varphi(G) \cup \varphi(G)^{-1}$. Consequently, if $H\equiv G$, then the width of $f_{\varphi}(H)$ is not greater than $s$, and hence it is exactly $s$. Therefore $\Phi_s(x)$ defines $f_{\varphi}(H)$ in all models $H$ of $Th(G)$.

\end{proof}

\begin{cor}Any verbal subgroup $v(G)$ of finite width is definable in $G$ uniformly with respect to $Th(G)$.\end{cor}
\begin{lem} \label{lem61}Let $G$ be a $c$-nilpotent $A$-exponential group generated by the set $X=\{x_1,\ldots ,x_n\}$. Let $N$ be a definable normal subgroup of $G$. Then the subgroup $[N, G]$ is of width not greater than $n(c-1)$ with respect to the set of generators
$$[N,X]=\{[y,x]:y\in N, x\in X\}.$$\end{lem}
\begin{proof} Let $R_0=N$, $R_i=[N, \underbrace{  G,\ldots ,G  }_{\textrm{
$i$-times}}]$ , $i=1,\ldots, c$, and consider the central series of subgroups
$$N=R_0\geq [N,G]=R_1\geq R_2 \geq \ldots \geq R_c=1.$$
Since $[R_i,G]=R_{i+1}$, then for $y\in R_i$, $g_1,g_2\in G$ and $\alpha \in A$ we have
\begin{equation}\label{Hall}\begin{split}
[y,g_1g_2]R_{i+2}&= [y,g_1][y,g_2]R_{i+2} \\
[y,g_1^\alpha]R_{i+2}&=[y^\alpha,g_1]R_{i+2}.
\end{split}\end{equation}
Hence for $y_k \in N$, $g_k\in G$, decomposing $g_k$ with the help of generators from $X$ and using congruences \eqref{Hall}, we obtain:
$$\prod_{k=1}^n[y_k,g_k]=\prod_{k=1}^n[y_k(0),x_k]r_2,$$
where $y_k(0)\in R_0=N$, $r_2\in R_2$. Continuing this process for $r_2$, we obtain by induction
$$\prod_{k=1}^n[y_k,g_k]=\prod_j^{c-1}\prod_{i=1}^n[y_i(j),x_i],$$
where $y_i(j)\in R_j \leq N$. Consequently, the width of $[N,G]$ with respect to the set of generators $[N,G]$ does not exceed $(c-1)n$ . 

\end{proof}
\begin{cor}\label{corlem1} Assume $G$ is nilpotent group which is finite generated as an $A$-group. Then each term of the lower central series of $G$ is a subgroup of finite width. \end{cor}
\begin{prop} \label{prop62}Assume $G$ is a finitely generated $A$-exponential group. Then:
\begin{enumerate}
\item each term of the upper central series
$$G=Z_c(G)> Z_{c-1}(G) > \ldots > Z_1(G) > Z_0(G)=1 \qquad                 (Z)$$
is definable in $G$ uniformly with respect to $Th(G)$,
\item each term of the lower central series
$$G=\Gamma_1(G)> \Gamma_2(G) > \ldots > \Gamma_c(G) > \Gamma_{c+1}=1$$
is definable in $G$ uniformly with respect to $Th(G)$.
\end{enumerate}\end{prop}
\begin{proof} Define the formulas $\Phi_i(x)$, $i=0, 1, \ldots, c$,  recursively as follows.
$$\Phi_0(x)= (x=1), \quad \Phi_i(x)=\forall y~\Phi_{i-1}([x,y]),\text{ if, } 1\leq i \leq c.$$
Clearly $\Phi_i(x)=\forall y_i,\ldots,y_1 ([x,y_i,\ldots, y_1]=1)$ and so, for each group $G$, each $x\in G$ satisfies $\Phi_i$ in $G$ if and only if $x\in Z_i(G)$. So the $Z_i(G)$ are definable in $G$ uniformly with respect to $Th(G)$. Part (2) follows from Proposition~\ref{prop61} and Corollary~\ref{corlem1}. 

\end{proof}
 
\begin{lem}\label{inter-quo:lem} Assume $N_i\unlhd G_i$, $i=1,2, \ldots, n$, for some natural number $n$, are definable subgroups of $G$ uniformly with respect to $Th(G)$. Then,
\begin{enumerate}
\item all quotient groups $G_i/N_i$ are interpretable in $G$ uniformly with respect to $Th(G)$,
\item the direct sum $\bigoplus_{i=1}^n G_i/N_i$ is interpretable in $G$ uniformly with respect to $Th(G)$.  
\end{enumerate}
\end{lem}
The proof of the lemma is an elementary exercise in model theory so it is omitted.
\begin{prop}\label{prop63}Assume $G$ is a finitely generated $A$-exponential nilpotent group, and each term of the $A$-series
$$G=R_c>R_{c-1}>\ldots R_1>R_0=1 \qquad (R)$$
is definable in $G$ uniformly with respect to $Th(G)$ . Then:
\begin{enumerate}
\item all the terms $R^u_i$ of the upper series associated with $(R)$
are definable in $G$ uniformly with respect to $Th(G)$,
\item all the terms $R^l_i$ of the lower series associated with $(R)$ are definable in $G$ uniformly with respect to $Th(G)$,
\item the system $\mathfrak{U}(F_R)$ corresponding to the bilinear mapping $F_R$ is interpretable in $G$ uniformly with respect to $Th(G)$,
\item the rings $P_R$ and $PL_R$ are non-empty and their actions on the quotients $R^u_i/R^u_{i-1}$ and $R^l_{i}/R^l_{i-1}$
are interpretable in $G$ uniformly with respect to $Th(G)$.\end{enumerate}\end{prop}
\begin{proof} Parts (1) and (2) immediately follow from Lemma~\ref{lem61} and Proposition~\ref{prop61}. To prove part (3) note that by parts (1) and (2) and Lemma~\ref{inter-quo:lem} $G/V(R)$, $R^u$ and $R^l$ are all absolutely interpretable in $G$. Moreover we use only commutators $[x,y]$ to define the bilinear map $F_R$. To verify part (4) note that $F_R$ has a finite complete system and is of finite width since all the modules $G/V(R)$, $R^u$ and $R^l$ are finitely generated $A$-modules and $F_R$ is $A$-bilinear. It is by construction full and non-degenerate. So $F_R$ satisfies the hypotheses of Theorem~\ref{ringinter}. Therefore the largest ring of scalars $P(F_R)$ exists and more importantly the structure $\mathfrak{U}_{P(F_R)}(F_R)$ is absolutely interpretable in $\mathfrak{U}(F_R)$ which by part (3) is absolutely interpretable in $G$. Interpretations are uniform with respect to $Th(G)$. This in turn implies that the largest subring $P_R$ of $P(F_R)$ stabilizing these quotients is interpretable in $G$.  Finally the homomorphisms
$$e_i:R^l_i/R^l_{i-1}\rightarrow R^u_i/R^u_{i-1},$$ are interpretable in $G$, which implies that $PL_R$ is interpretable in $G$. By definition all these rings include the identity map and are all nontrivial. 
\end{proof}

\noindent\emph{Proof of Proposition~\ref{prop64}.} The proof is similar to that of Proposition~\ref{prop63}.

\qed

\section{Interpretations in finite dimensional commutative algebras
and elementary equivalence}\label{Z-interpret:sec}
In this section we describe by first order formulas some algebraic
invariants of any associative commutative ring $A$ with finitely
generated additive group $A^+$. In particular we provide proofs of  Theorem~\ref{elemmod:thm} and \ref{elem-iso-alg:thm}. 
\subsection{Interpretability of decomposition of zero into the product
of prime ideals with fixed characteristic}\label{Prime-decom:sec}

Let $A$ be an arbitrary associative commutative ring with a
unit. Suppose that we have a decomposition of zero into the product of
finitely generated prime ideals:
$$0=\p_1\cdot \p_2 \cdots \p_m,\qquad (\mathfrak{P})$$
Let $Char(\p_i)=\lambda_i$ be the characteristic of the integral
domain $A/\p_i$ and $$Char(\mathfrak{P})=(\lambda_1,\ldots ,
\lambda_m).$$ The purpose of this subsection is to obtain a formula
interpreting the decomposition of type $(\mathfrak{P})$ in $A$ with the
fixed characteristic $Char(\mathfrak{P})$, where the interpretation is uniform with respect to $Th(A)$.

A sequence of lemmas will follow. We omit some proofs as they are obvious.

\begin{lem} \label{P1} Consider the formula
$$Id(x,\bar{y})=\exists z_1,\ldots, \exists z_n (x=y_1z_1+\ldots + y_nz_n).$$
For any tuple $\bar{a}=(a_1,\ldots, a_n)\in A^n$ the formula $Id(x,\bar{a})$
defines in $A$ the ideal $id(\bar{a})$, generated by the elements
$a_1,\ldots, a_n$.\end{lem}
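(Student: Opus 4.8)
The claim to prove is that for a fixed tuple $\bar a=(a_1,\dots,a_n)\in A^n$, the set $\{x\in A: A\models Id(x,\bar a)\}$ equals the ideal $id(\bar a)=\{a_1r_1+\cdots+a_nr_n: r_i\in A\}$. The plan is a direct two-way inclusion together with a remark on why the definition is uniform in the sense used elsewhere in the paper. First I would unwind the semantics of the formula: $A\models Id(b,\bar a)$ means precisely that there exist $z_1,\dots,z_n\in A$ with $b=a_1z_1+\cdots+a_nz_n$, which is literally the statement $b\in id(\bar a)$ since $A$ is commutative with unit. So the two sets coincide essentially by definition; there is nothing to compute.

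For the forward inclusion, given $b$ satisfying $Id(b,\bar a)$, the witnesses $z_1,\dots,z_n$ exhibit $b=\sum_i a_i z_i$, so $b\in id(\bar a)$ because $id(\bar a)$ is exactly the set of all $A$-linear combinations of the $a_i$ (using that in a commutative ring with unit the ideal generated by a finite set is the set of such combinations, with no extra additive-generator terms needed). For the reverse inclusion, given $b\in id(\bar a)$, write $b=\sum_i a_i r_i$ for some $r_i\in A$; then taking $z_i=r_i$ witnesses the existential quantifier, so $A\models Id(b,\bar a)$. Finally I would observe that the formula $Id(x,\bar y)$ has $\bar y$ as free parameters and does not depend on $A$, so it defines $id(\bar a)$ uniformly across all commutative unital rings; this is what justifies using it freely in the subsequent lemmas (e.g.\ the prime-ideal decomposition arguments).

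Honestly there is no real obstacle here: the only thing to be slightly careful about is the standing hypothesis that $A$ is associative commutative with unit, which is exactly what makes the ``set of linear combinations'' description of the generated ideal correct (without a unit one would also need the terms $n_i a_i$ with $n_i\in\Z$, and without commutativity one would need two-sided sums $\sum x_i a_i y_i$). Since the paper fixes this hypothesis at the start of the subsection, the lemma follows immediately, and I would present it in two or three sentences accordingly.
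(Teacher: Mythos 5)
Your proof is correct and is exactly the routine verification the paper has in mind (the paper omits this proof as obvious): the formula's semantics literally coincide with the description of $id(\bar a)$ as the set of $A$-linear combinations of $a_1,\dots,a_n$, which is valid precisely because $A$ is commutative with unit. Your added remark on uniformity of the formula across rings is also the right observation for how the lemma gets used later.
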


\begin{lem}\label{P2} The formula
$$P(\bar{y})=\forall x_1,\forall x_2(Id(x_1x_2,\bar{y})\rightarrow
(Id(x_1,\bar{y})\vee Id(x_2,\bar{y})))$$
is true for the tuple $\bar{a}$ of elements of the ring $A$ if and
only if the ideal $id(\bar{a})$ is prime.\end{lem}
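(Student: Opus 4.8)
The plan is to verify both implications directly from the definition of a prime ideal, using Lemma~\ref{P1} to translate the formula-level statements into statements about the ideal $id(\bar a)$. Recall that a proper ideal $\p$ of a commutative ring is prime precisely when for all $r_1, r_2 \in A$, the membership $r_1 r_2 \in \p$ forces $r_1 \in \p$ or $r_2 \in \p$. By Lemma~\ref{P1}, for any $a \in A$ the atomic-over-parameters formula $Id(a,\bar a)$ holds in $A$ if and only if $a \in id(\bar a)$. Hence $A \models P(\bar a)$ says exactly: for all $x_1, x_2 \in A$, if $x_1 x_2 \in id(\bar a)$ then $x_1 \in id(\bar a)$ or $x_2 \in id(\bar a)$. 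This is visibly the defining condition for primeness of $id(\bar a)$, with one caveat I address below.

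First I would record the easy direction: if $id(\bar a)$ is prime, then by definition it satisfies the implication displayed above for every choice of $x_1, x_2$, and therefore $A \models P(\bar a)$ after applying Lemma~\ref{P1} in the forward direction to rewrite each $Id(\cdot,\bar a)$ as the corresponding membership. Conversely, if $A \models P(\bar a)$, then again by Lemma~\ref{P1} the ideal $id(\bar a)$ satisfies $r_1 r_2 \in id(\bar a) \Rightarrow r_1 \in id(\bar a) \vee r_2 \in id(\bar a)$ for all $r_1, r_2 \in A$, which is exactly the multiplicative condition defining a prime ideal.

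The one genuine subtlety — and the only place the statement needs care — is the requirement that a prime ideal be \emph{proper}, i.e.\ $id(\bar a) \neq A$. The formula $P(\bar y)$ as written does not literally exclude $id(\bar a) = A$; indeed if $id(\bar a) = A$ then $Id(x,\bar a)$ holds for every $x$ and the implication in $P(\bar a)$ is vacuously true. So strictly the lemma should be read with the convention (standard in this setting, and consistent with the decomposition $(\mathfrak{P})$ where the $\p_i$ are proper) that "prime" allows the unit ideal, or else $P(\bar y)$ should be conjoined with $\neg Id(1,\bar y)$ expressing properness. I would simply note this: under the convention that the improper ideal counts as (degenerate) prime the equivalence is exact as stated, and otherwise one appends the properness clause, which is itself a first-order formula over the parameters $\bar y$, so uniform interpretability is unaffected.

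The proof has no real obstacle — it is a one-step unwinding of definitions through Lemma~\ref{P1}. The only thing to be deliberate about is flagging the properness convention so the later use of this lemma in building the interpretable decomposition $(\mathfrak{P})$ (Proposition~\ref{primary:thm}) is not derailed by the unit ideal sneaking in as a spurious "prime".
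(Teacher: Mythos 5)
Your proof is correct and is exactly the one-step unwinding via Lemma~\ref{P1} that the paper treats as obvious (its proof of this lemma is omitted). Your caveat about properness is well taken but is already absorbed later in the paper: the clause $\bigwedge_{i=1}^m \exists z\,\neg Id(z,\bar y_i)$ in the formula $D_{\Lambda}$ of Lemma~\ref{P4} is precisely the properness condition, so the unit ideal cannot enter the interpretable decomposition used in Proposition~\ref{primary:thm}.
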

\begin{lem}\label{P7} There exists a formula $Id_i(x,\bar{y}_1,\ldots , \bar{y}_i)$,
such that for any tuples $\bar{a}_1$, \ldots , $\bar{a}_i$, $Id_i(x,\bar{a}_1,\ldots , \bar{a}_i)$ defines the ideal
$\p_1\cdots \p_i$ in $A$ where $\p_k=id(\bar{a}_k)$. \end{lem}
Indeed the ideal $\p_1\cdots \p
_i$ is generated by all the products of the form $y_1\cdots y_i$
where $y_k$ is an element of the tuple $\bar{a}_k$ and the number of such products
is finite.
\begin{lem}\label{P3} The formula:
$$D(\bar{y_1},\ldots,
\bar{y_m})=\forall x( \bigwedge_{i=1}^mP(\bar{y}_i) \wedge Id_m(x,\bar{y}_1,\ldots , \bar{y}_m)\rightarrow
x=0)$$
is true for tuples $\bar{a}_1,\ldots, \bar{a}_m$ if and only if the
ideals $\p_i=Id(\bar{a}_i)$ satisfy the decomposition
$(\mathfrak{P})$.\end{lem}

\begin{lem}\label{P4} The formula
$$D_{\Lambda}(\bar{y}_1,\ldots , \bar{y}_m)=D(\bar{y}_1, \ldots
\bar{y}_m) \wedge \bigwedge_{i=1}^m \forall x Id(\lambda_ix,\bar{y}_i)\wedge\bigwedge_{i=1}^m
\exists z \neg Id(z,\bar{y}_i)$$
where $\Lambda=(\lambda_1,\ldots \lambda_m)=Char(\mathfrak{P})$ is true
for tuples $\bar{a}_1,\ldots ,\bar{a}_m$ of elements of $A$ if and only
if all the following statements hold:
\begin{itemize}
\item the ideals $\p_i=id(\bar{a}_i)$ satisfy the decomposition
$(\mathfrak{P})$,
\item if $\lambda_i>0$ then $Char(A/\p_i)=\lambda_i$, 
\item the integral domains $A/\p_i$ are all non-zero.
\end{itemize}\end{lem}

Denote by $0(\mathfrak{P})$ the number of zeros in the tuple
$(\lambda_1,\ldots , \lambda_m)$.
\begin{lem}\label{decom}Let $\mathfrak{P}=(\p_1,\ldots, \p_m)$ be a collection of finitely generated prime ideals of the ring $A$, satisfying the
decomposition $(\mathfrak{P})$ and possessing the least number
$0(\mathfrak{P})$ among all such decompositions. Then for any ring $B$
if $A\equiv B$, then the formula $D_{\Lambda}(\bar{y}_1,\ldots ,
\bar{y}_m)$ is true in $B$ on tuples $\bar{b}_1$, \ldots, $\bar{b}_m$,
if and only if:
\begin{enumerate}

\item $Id(x,\bar{b}_i)$ defines the prime ideal $\q_i=id(\bar{b}_i)$,
\item $0=\q_1\cdot \q_2\cdots \q_m$,
\item $Char(B/\q_i)=\lambda_i$, \quad $i=1, \ldots ,m$.
\end{enumerate}\end{lem}
\begin{proof} Items 1 and 2 follow from Lemma~\ref{P3}. If $\lambda_i>0$ then
Char$(B/q_i)=\lambda_i$ according to Lemma~\ref{P4}. Consequently,
$0(\mathfrak{P})\geq 0(\mathfrak{Q})$ where $\mathfrak{Q}=(\q_1,\ldots
, \q_m)$. If $0(\mathfrak{P})>0(\mathfrak{Q})$ then starting from
$\mathfrak{Q}$ we construct the formula $D_{\mu}$,
$\mu=char(\mathfrak{Q})$. From $A\equiv B$ and Lemma~\ref{P4} we obtain that
there exists a tuple $\mathfrak{P}'=(\p'_1,\ldots, \p'_m)$ such that
$0(\mathfrak{P}')\leq 0(\mathfrak{Q})<0(\mathfrak{P})$ which contradicts the choice of $\mathfrak{P}$. Consequently
$0(\mathfrak{P})=0(\mathfrak{Q})$ and hence
$Char(\mathfrak{P})=Char(\mathfrak{Q})$. The proposition is
proved.

\end{proof}
\begin{rem}\label{decom:rem} Any Noetherian commutative associative ring with a unit
possesses a decomposition of zero $0=\p_1\ldots \p_m$, satisfying the
assumptions of Proposition~\ref{decom}.\end{rem}

\begin{prop} \label{primary:thm} For any Noetherian associative commutative ring $A$ with a
unit, there exists an interpretable decomposition of zero into a product of prime ideals, where the interpretation is
uniform with respect to $Th(A)$.\end{prop}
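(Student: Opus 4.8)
The plan is to reduce the proposition to the classical fact that in a Noetherian commutative ring the zero ideal is a finite product of prime ideals, and then to notice that, precisely because of Noetherianity, this fact can be recorded by a single first-order sentence, which will therefore belong to $Th(A)$ and be witnessed in every model of $Th(A)$.

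First I would run the classical argument inside $A$ itself. Since $A$ is Noetherian, $(0)$ contains a finite product of prime ideals: writing $\mathrm{nil}(A)=\p_1\cap\cdots\cap\p_r$ for the finitely many minimal primes of $A$ and using that $\mathrm{nil}(A)^N=(0)$ for a suitable $N$ (the nilradical of a Noetherian ring is nilpotent), one gets $\p_1^N\cdots\p_r^N=(\p_1\cdots\p_r)^N\subseteq\mathrm{nil}(A)^N=(0)$. A product of ideals contained in $(0)$ is $(0)$, so, relabelling its factors, we obtain primes $\p_1,\ldots,\p_m$ of $A$ with $\p_1\cdots\p_m=(0)$. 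Noetherianity also makes each $\p_i$ finitely generated; fix an integer $k$ large enough that $\p_i=\langle a^{(i)}_1,\ldots,a^{(i)}_k\rangle$ for suitable $a^{(i)}_j\in A$, for all $i$ simultaneously.

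Next I would package this into a first-order sentence. For a fixed $k$ the property ``the ideal $\langle\bar y\rangle$ generated by a $k$-tuple $\bar y=(y_1,\ldots,y_k)$ is a proper prime ideal'' is first-order: it is $\neg(1\in\langle\bar y\rangle)\wedge\forall u\,\forall v\,(uv\in\langle\bar y\rangle\to u\in\langle\bar y\rangle\vee v\in\langle\bar y\rangle)$, where ``$x\in\langle\bar y\rangle$'' abbreviates $\exists z_1\cdots\exists z_k\,(x=z_1y_1+\cdots+z_ky_k)$. Likewise ``$\langle\bar y^{(1)}\rangle\cdots\langle\bar y^{(m)}\rangle=(0)$'' is first-order, since that product ideal is generated by the finitely many elements $y^{(1)}_{j_1}\cdots y^{(m)}_{j_m}$, so the condition is $\bigwedge_{1\le j_1,\ldots,j_m\le k}\,y^{(1)}_{j_1}\cdots y^{(m)}_{j_m}=0$. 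Let $\theta(\bar y^{(1)},\ldots,\bar y^{(m)})$ be the conjunction of the prime-and-proper condition on each $\bar y^{(i)}$ with this product-zero condition, and set $\sigma:=\exists\bar y^{(1)}\cdots\exists\bar y^{(m)}\,\theta$. By the previous paragraph $A\models\sigma$, hence $\sigma\in Th(A)$ and $M\models\sigma$ for every $M\models Th(A)$. In such an $M$, choosing witnesses $\bar b^{(1)},\ldots,\bar b^{(m)}$ for $\sigma$, the sets $\p_i^M:=\langle\bar b^{(i)}\rangle$ are proper prime ideals of $M$ with $\p_1^M\cdots\p_m^M=(0)$, and each is cut out by the single fixed formula ``$x\in\langle\bar y^{(i)}\rangle$'' with parameters ranging over the definable set defined by $\theta$, which is nonempty in every model of $Th(A)$. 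This is exactly a decomposition of $(0)$ into a product of primes whose interpretation is uniform with respect to $Th(A)$.

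The argument is soft; the one point that needs care — and in a sense the reason the proposition is not vacuous — is that Noetherianity is exactly what lets the a priori second-order assertion ``there exist prime ideals with product $(0)$'' be replaced by the genuine first-order sentence $\sigma$, with a \emph{fixed} number $m$ of factors and a \emph{fixed} bound $k$ on the number of generators. After that, the conclusion transfers to all of $Th(A)$ even though a model $M\models Th(A)$ need not itself be Noetherian and need not admit such a decomposition for any ``canonical'' choice of primes; all that is claimed, and all that is needed downstream, is that $(0)$ be a product of $m$ uniformly parameter-definable prime ideals. The remaining bookkeeping — that ``prime'', ``proper'' and ``product $=(0)$'' are expressible over finitely generated ideals without quantifying over ideals — is routine, since a finitely generated ideal, and a finite product of such, are themselves uniformly parameter-definable. (The genuinely substantive matter, namely interpreting $\Z$ inside the quotients $A/\p_i$, is treated separately, and is where Julia Robinson's theorem enters.)
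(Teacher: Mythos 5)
Your argument is correct for the statement as literally phrased, and its mechanism is the same as the paper's: the parametric formulas ``$x\in\langle\bar y\rangle$'', ``$\langle\bar y\rangle$ is prime'' and ``the product of the $m$ ideals is $0$'' are exactly the paper's Lemmas~\ref{P1}--\ref{P3}, and the existence of witnesses in $A$ is the paper's Remark~\ref{decom:rem} (your minimal-primes/nilpotent-nilradical argument is a fine proof of that classical fact). The difference is in what the interpretation is required to carry. The paper does not stop at a purely existential sentence: it routes the proposition through Proposition~\ref{decom}, whose whole content is that one can also fix the characteristic vector $Char(\mathfrak{P})=(\lambda_1,\ldots,\lambda_m)$ of the quotients $A/\p_i$. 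Positive characteristics are recorded directly in the formula $D_\Lambda$ of Lemma~\ref{P4}, but ``$Char(A/\p_i)=0$'' is not expressible by a single formula in this generality, so the paper instead chooses a decomposition minimizing the number $0(\mathfrak{P})$ of characteristic-zero factors and shows by a minimality argument that any witnesses in a ring $B\equiv A$ must then reproduce the same characteristic vector. Your sentence $\sigma$ guarantees that witnesses in $B$ define primes $\q_i$ with $\q_1\cdots\q_m=0$, but a priori with different characteristics than in $A$, and that control is precisely what the subsequent steps consume: Lemmas~\ref{P5}--\ref{P6} and Proposition~\ref{big} build the subrings $\Z\cdot 1+\q_i$ and the formula $R_{n,\Lambda}$ using the known $\lambda_i$, and Proposition~\ref{basedef} uses the resulting $\mathfrak{P}$-series. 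So as a proof of the bare statement your argument stands, but to serve its role in the paper the decomposition must be the characteristic-rigid one of Proposition~\ref{decom}, and the minimality-of-$0(\mathfrak{P})$ device is the one genuinely non-soft ingredient your version omits.
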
 
\begin{proof} The proposition is a direct corollary of Proposition~\ref{decom} and Remark \ref{decom:rem}.\end{proof}

\subsection{The case of a ring with a finitely generated additive group}\label{elem-eq-rings:sec}
Let $A$ be a commutative associative ring with unit and a finitely
generated additive group $A^+$. We shall denote by $r(A)$ the minimal number of generators of $A^+$ as an abelian group, say, the number of cyclic factors in the invariant decomposition of $A^+$. In case that $M$ is a finitely generated $A$-module where $A$ is as above the minimal number of generators of $M$ as an abelian group is denoted by $r(M)$, while the minimal number of generators for $M$ as an $A$-module is denoted by $r_A(M)$.

\begin{lem}\label{P9} There exists a sentence $ch_{\lambda}$ of the language of rings such
that for any integral domain $A$ with finitely generated additive group
$A^+$:
$$char(A)=\lambda \Leftrightarrow A\models ch_{\lambda}.$$\end{lem}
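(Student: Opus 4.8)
\noindent\emph{Proof of Lemma~\ref{P9}.} The plan is to treat separately the cases where $\lambda$ is a prime, where $\lambda=0$, and where $\lambda$ is neither; only the middle case will use the hypothesis that $A^+$ is finitely generated. When $\lambda=p$ is a prime, I would take $ch_p$ to be the sentence $\underbrace{1+1+\cdots+1}_{p}=0$. If $A\models ch_p$ then $p\cdot 1=0$, so $char(A)$ divides $p$; since $A$ is an integral domain, $1\neq 0$, so $char(A)\neq 1$ and therefore $char(A)=p$. The converse is immediate, and no finiteness hypothesis is needed here. If $\lambda$ is neither $0$ nor a prime, no integral domain has characteristic $\lambda$, so one may take $ch_\lambda$ to be any unsatisfiable sentence such as $\exists x\,(x\neq x)$.

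The interesting case is $\lambda=0$. Here I would let $ch_0$ be the sentence
$$\exists x\,\bigl(x\neq 0\ \wedge\ \forall y\,(xy\neq 1)\bigr),$$
which asserts that $A$ is not a field, and then verify that for a finitely generated integral domain $A$ this holds precisely when $char(A)=0$. For the forward direction of the equivalence one argues contrapositively: if $char(A)=p>0$ then $p\cdot z=(p\cdot 1)z=0$ for every $z\in A$, so $A^+$ is a finitely generated torsion abelian group, hence finite; and a finite integral domain is a field (for $a\neq 0$ the map $z\mapsto az$ is injective on the finite set $A$, hence bijective, so $1\in aA$). Thus in positive characteristic $A\models\neg ch_0$. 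Conversely, if $char(A)=0$ then $\Z\cdot 1\cong\Z$ sits inside $A$, and if $A$ were a field it would contain the fraction field $\Q$ of $\Z\cdot 1$; but $\Q$ is not finitely generated as an abelian group, whereas every subgroup of the finitely generated abelian group $A^+$ is finitely generated — a contradiction. Hence $A$ is not a field and $A\models ch_0$.

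There is essentially no obstacle: the only point needing care is the $\lambda=0$ equivalence, which rests on the two elementary facts that a finite domain is a field and that a field of characteristic zero contains $\Q$, which is not finitely generated additively. Assembling the three cases gives the single sentence $ch_\lambda$ required by the lemma. \qed
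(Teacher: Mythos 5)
Your proof is correct, and it takes a slightly different route from the paper's, though both arguments turn on the same underlying principle. For $\lambda=0$ the paper's sentence is ``$2\neq 0$ and $\forall y\,(2y\neq 1)$'': in characteristic $0$ an inverse of $2$ would force $\Z[1/2]\leq A$, which is not finitely generated additively, while in characteristic $p\neq 2$ the domain contains $\mathbb{F}_p$ and hence $1/2$, and in characteristic $2$ the clause $2\neq 0$ fails. You instead take $ch_0$ to say ``$A$ is not a field,'' proving the forward direction via the facts that a finitely generated additive group of prime exponent is finite and a finite domain is a field, and the converse via the fact that a characteristic-zero field contains $\Q$, which is not finitely generated (using that subgroups of finitely generated abelian groups are finitely generated). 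So the paper tests invertibility of the single element $2$ and excludes $\Z[1/2]$, whereas you test invertibility of an arbitrary nonzero element and exclude $\Q$; your version is arguably more conceptual and also makes explicit the trivial case of $\lambda$ neither zero nor prime, which the paper leaves implicit since integral domains admit no such characteristic. Both arguments are complete and use the finite generation of $A^+$ only in the $\lambda=0$ case, exactly where it is needed.
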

\begin{proof} To prove the claim notice that if $\lambda$ is a prime then we can set
$ch_{\lambda}= \forall x (\lambda x=0)$. For $\lambda =0$ it is
enough to note that for the integral domain $A$, $char(A)=0$ if and
only if $2\neq 0$ and $1/2\notin A$. In fact if $char(A)=0$ then $2\neq
0$ and if $1/2\in A$ then $A\geq \mathbb{Z}[1/2]$ but
$\mathbb{Z}[1/2]$ is not finitely generated. Contradicting with the
assumption that $A^+$ is finitely generated. Conversely if $char(A)=p\neq 2$, then
$pA=0$. So $A$ contains the finite field $\Z/p\Z$ and so
$1/2\in A$.

\end{proof}
\begin{lem}\label{P10} Let $A$ be a commutative associative ring with unit and $r(A)=n$. Then there exists a sentence $\varphi_{n}$ of the language of rings such that $ A
\models \varphi_{n}$ and for any commutative associative ring $B$ with unit,
$$B \models \varphi_n \Leftrightarrow r(B)\leq n.$$\end{lem}

\begin{proof} 
Let us first assume that $A$ is an integral domain. By  Lemma~\ref{P9} there is a sentence $ch_{\lambda}$ that defines the characteristic $char(A)=\lambda$ of $A$ in the language of rings. If   $char(A)=\lambda\neq
0$ then $A$ is finite and $\varphi_n$ will say that $ch_\lambda$ and $A$ does not
have more than $\lambda^n$ elements. If $char(A)=0$ then
$r(A)=n$ if and only if $|A^+/2A^+|=2^n$. So in this case $\varphi_n$ will say that $ch_0$ and there are precisely $2^n$ distinct elements in $A$ modulo $2A$. 

Now assume $A$ is not necessarily an integral domain. Then $A$ admits a decomposition of zero $$0=\p_1\cdot \p_2 \cdots \p_m,\qquad (\mathfrak{P})$$ with  $\Lambda=char(\mathfrak{P})$. Set $O_i=\p_0\cdots\p_i$, where $\p_0=A$. Set also $\bar{O}_i=O_i/O_{i+1}$. Note that $r(A)$ is bounded by 
$$\sum_{i=0}^{m-1}r(\bar{O}_i).$$
So it is enough to come up with sentences $\varphi_i$ each expressing a bound for $r(\bar{O}_i)$. By lemma~\ref{P4} there are tuples of elements $\bar{a}_i$ ,$i=1, \ldots, m$ satisfying $D_{\Lambda}(\bar{a}_1, \ldots \bar{a}_m)$. Moreover if $B$ is any ring similar to $A$ with tuples of elements $\bar{b}_1, \ldots, \bar{b}_m$ which satisfy $D_\Lambda(b_1,\ldots ,b_m)$ then by Lemma~\ref{P5}, $B$ has a decomposition of zero $\mathfrak{Q}$ with same exact properties of $\mathfrak{P}$. Moreover the formula $id_i(x,\bar{a}_1, \ldots \bar{a}_i)$ from Lemma~\ref{P1} defines $O_i$ in $A$ and $id_i(x,\bar{b}_1, \ldots \bar{b}_i)$ defines similar term in $B$. The quotients  $\bar{O}_i$ are finitely generated $A$-modules over the integral domains $A/\p_i$. Assume $r(A/\p_{i+1})=n_i$ and $r_{A/\p_{i+1}}(\bar{O})=s_i$. Note that $r(\bar{O}_i)\leq n_is_i$. So it is enough to define $n_i$ and $s_i$ in the language of rings. By definability of the $\p_i$ and the $O_i$ it is easy to write a sentence in the language of rings saying that $r_{A/\p_{i+1}}(\bar{O}_i)\leq s_i$. By the first paragraph of this proof and definability of $\p_{i+1}$ there is also a sentence in the language of rings saying that $r(A/\p_{i+1})\leq n_i$. Note that the same formulas work for a ring $B$ as above. 
\end{proof}
\begin{cor} \label{10b} Assume $\mathcal{K}$ is the class of all associative commutative unitary rings $A$ with unit and $r(A)<\infty$. Assume $\mathcal{I}_n$ is the subclass of $\mathcal{K}$ consisting of all integral domains $A$ of characteristic zero with $r(A)\leq n$, for some natural number $n>0$. Then there exists a sentence $\Phi_n$ of the language of rings such that for any $A\in \mathcal{K}$
$$A\models \Phi_n \Leftrightarrow A\in \mathcal{I}_n.$$\end{cor}
\begin{proof} A ring $A$ being an integral domain is axiomatizable by one ring theory sentence. The formula $ch_0$ from Lemma~\ref{P9} is true in any $A\in \mathcal{I}_n$ and conversely implies that $A\in \mathcal{K}$ has characteristic 0 once $A$ satisfies it. The formula $\varphi_n$ from Lemma~\ref{P10} is satisfied by any $A\in \mathcal{I}_n$ and conversely will force $r(A)\leq n$ for any $A\in \mathcal{K}$ satisfying it. The conjunction of these sentences is the desired one.\end{proof}
\begin{cor} \label{10a} Let $A\in \mathcal{I}_n$. Then, there exists a formula $\phi_{\Z}$ of the language of rings such that 
$$A \models \phi_\Z \Leftrightarrow A\cong \Z.$$ 
\end{cor}
\begin{proof} By Corollary~\ref{10b} the formula $\Phi_1$ characterizes members of $\mathcal{I}_1$ among those of $\mathcal{K}$. But $\mathcal{I}_1$ has only one member up to isomorphism, namely $\Z$. So we may set $\phi_\Z=\Phi_1$. \end{proof}

\begin{lem}\label{julia:lem} Consider the class $\mathcal{I}_n$ introduced in Corollary~\ref{10b}. Then there exists a formula $R_n(x)$ defining
the subring $\mathbb{Z}\cdot 1_A$ in any member $A$ of $\mathcal{I}_n$. \end{lem}
\begin{proof} We need to note that the field of fractions $F$ of $A$ is an extension of field of rationals $\Q$ with dimension $n$ over $\Q$. So $F$ is a field of algebraic numbers of finite degree over $\Q$. Now by Theorem on page 956 of \cite{julia} the ring of integers $\Z$ is definable in $F$ by a formula $\Phi_{\Z(F)}(x)$. An inspection of the proof shows that the formula defines $\Z$ in any algebraic extension $K$ of $\Q$ with $[K:\Q]\leq [F:\Q]=r(A)=n$. Moreover $F$ is uniformly interpretable in $A$. Though elementary, let us elaborate on this claim here a bit. Recall that $F$ is realized as $X/\sim$ where
$$X=\{(x,y):x\in A, y\in A\setminus \{0\}\},$$
and $\sim$ is the equivalence relation on $X$ defined by 
$$(x,y)\sim (z,w) \Leftrightarrow xw=yz.$$
Addition and multiplication are defined on $X/\sim$ in the obvious manner using addition and multiplication on $A$. The same formulas interpret the field of fractions $K$ of any integral domain of characteristic zero $B$ in $B$. So combining the results here we have an interpretation of $\Z$ in $A$. 

But the above interpretation of $\Z$ in $A$ also provides a formula defining $\Z$ (as a subset of $A$) in $A$ in the following way. Note that there is an interpretable monomorphism $\mu: A \to F$ defined by $\mu (a)=[(a,1)]$ where $|F|=X/\sim$ is considered as the set of equivalence classes $[(x,y)]$ described above. Now the copy of $\Z$
sitting in $F$ is included in the image of $\mu$ so the copy of $\Z$ in $A$ is a definable subset of $A$ as $\mu^{-1}(\Phi_{\Z(F)}(\mu(A))$. Since by Corollary~\ref{10a} $\Z$ is axiomatizable in $\mathcal{I}_n$ by one formula,  there exists a formula defining $\Z$ in any member of $\mathcal{I}_n$.   
\end{proof} 

\begin{lem}\label{P5} There exists a formula $R_{n,\Lambda}(x,\bar{y})$ such that for any commutative associative 
ring $A$ with unit and $r(A)\leq n$ and for any prime ideal
$\p=id(\bar{a})$ of $A$ if $char(A/\p)=\lambda$ then the formula
$R_{n,\Lambda}(x,\bar{a})$ defines the subring
$$\mathbb{Z}\cdot 1+\p=\{z\cdot 1+x:z\in \mathbb{Z}, x\in \p\},$$
in $A$. \end{lem}

\begin{proof} Indeed the ideal $\p=id(\bar{a})$ is defined in $A$ by the formula
$Id(x,\bar{a})$. Consequently the ring $A/\p$ and the canonical
epimorphism $A \rightarrow A/\p$ are interpretable in $A$. Therefore
to obtain $R_{n,\Lambda}(x,\bar{y})$ it is sufficient to define the
subring $\mathbb{Z}\cdot 1$ in $A/\p$. In the case of $Char (A/\p)=0$ we use the formula $R_n(x)$ from Lemma~\ref{julia:lem}. As for the case of $char(A/\p)>0$ the set $\mathbb{Z}\cdot 1 +\p$ is finite
in $A/\p$ and hence definable in $A/\p$. 

\end{proof}

\begin{lem}\label{P6} Assume $A$ is a commutative associative ring with unit and $r(A)\leq n$, admitting a decomposition
$$0=\p_1\ldots \p_m, \quad (\mathfrak{P}),$$
into prime ideals $\p_i=id(\bar{a}_i)$ with $char(A/\p_i)=\lambda_i$, $i=1, \ldots ,
n$. Let $\Lambda=(\lambda_1, \ldots , \lambda_m)$. Then there exists a first-order formula of the language of rings $R_{n,\Lambda}(x, \bar{y}_1, \ldots ,
\bar{y}_m)$ where  such that
 then the formula $R_{n,\Lambda}(x,\bar{a}_1, \ldots, \bar{a}_m)$
defines in $A$ the subring
$$A_{\mathfrak{P}}=\bigcap_{i=1}^m(\mathbb{Z}\cdot 1 + \p_i),$$
\end{lem}
\begin{proof} For each $1\leq i \leq m$ consider the formula $R_{n,\lambda_i}(x,\bar{y}_i) $ introduced in Lemma~\ref{P5}. So we can set
$$R_{n,\Lambda}(x,\bar{a}_1, \ldots, \bar{a}_m)=\bigwedge_{i=1}^mR_{n,\lambda_i}(x,\bar{y}_i).$$
\end{proof} 
To a decomposition of $0$ in $A$ as above we associate the
series of ideals $$A> \p_1>\p_1\p_2>\ldots> \p_1\cdots \p_m =0$$ of the ring $A$
which will be called a $\mathfrak{P}$-series. The ring
$A_{\mathfrak{P}}$ from Lemma~\ref{P6} acts on all the quotients $\p_1\cdots \p_i/\p_1\cdots
\p_{i+1}$
 as each subring $\mathbb{Z}\cdot 1 +\p_{i+1}$ acts on the quotient  $\p_1\cdots \p_i/\p_1\cdots
\p_{i+1}$ for each $i=1, \ldots, m$.

Recall that $L_2$ is the language of two-sorted modules, say, $\langle M, A,
s \rangle$. We drop $s$ from our notation and denote the structure only by $\langle M, A\rangle$.  
If $A$ is a commutative associative ring with unit admitting a decomposition of zero $\mathfrak{P}$, then $\mathfrak{P}$-series of the
ring $A$ induces a series of $A$-modules
$$M\geq \p_1M\geq \p_1\p_2M\geq \ldots \geq \p_1\cdots \p_mM=0,$$
which will also be called a \emph{$\mathfrak{P}$-series for the $A$-module $M$} or a
special series for $M$.
The following lemma is a direct corollary of Proposition~\ref{decom}.
\begin{lem}\label{P8} There exists a formula $\phi_i(x,\bar{y}_1, \ldots ,\bar{y}_i)$
such that if $\p_1\cdots \p_m=0$ is a decomposition of zero in the ring
$A$ and $\p_k=id(\bar{a}_k)$, then $\phi_i(x,\bar{a}_1, \ldots ,
\bar{a}_i)$ defines the submodule $M_i=\p_1\cdots \p_iM$ in the two-sorted model $\langle M,A \rangle$.\end{lem}

The following proposition collects the main results of this section so far. 
\begin{prop}\label{big} 
Assume $A$ ($B$) is commutative associative ring with unit  
 and finitely generated additive group $A^+$ (resp. $B^+$). Let $M$ (resp. N) be a finitely generated $A$-module (resp. $B$-module). Assume there are tuples $\bar{a}_1$,$ \ldots $, $\bar{a}_m$ of elements of $A$ which satisfy $D_{\Lambda}(\bar{x}_1, \ldots , \bar{x}_m)$. Moreover assume $\langle M, A\rangle \models \varphi_n$. Then the following hold uniformly with respect to all models $\langle N, B\rangle$ of $Th(\langle M, A\rangle)$  containing tuples $\bar{b}_{i}$ which satisfy $D_{\Lambda}(\bar{b}_1, \ldots , \bar{b}_m)$.
\begin{enumerate}

\item $Id(x,\bar{b}_i)$ defines in $B$ the prime ideal
$\q_i=id(\bar{b}_i)$;
\item If $\mathfrak{Q}=(\q_1, \ldots , \q_m)$ then
$char(\mathfrak{q})=\Lambda$;
\item $0=\q_1\cdots \q_m$;
\item $\phi_i(x,\bar{q}_1, \ldots , \bar{q}_i)$ defines the $i$-th
term $N_i=\q_1\cdots \q_iN$ of the special $\mathfrak{Q}$-series for $N$ in $\langle N,B \rangle$;
\item $r(B)\leq n$.
\item The formula $R_{n,\Lambda}(x,\bar{b}_1, \ldots , \bar{b}_m)$, defines the subring
$$B_{\mathfrak{Q}}=\bigcap_{i=1}^m(\mathbb{Z}\cdot 1+\q_i),$$ in $B$.

\end{enumerate}\end{prop}
\begin{proof} Items (1)-(4) follow from  Lemma~\ref{decom},
\ref{P6} and  \ref{P9}. Part (5) follows directly from Lemma~\ref{P9}. To prove (6.) we note that by (5), $r(B)\leq n$. So the statement follows from Lemma~\ref{P8}.
 \end{proof}
Finally we are ready to prove the main technical result of this section. By a \emph{$\Z$-pseudo-basis} for finitely generated abelian group $M$ we simply mean a minimal generating set for $M$ as an abelian group. Assume $\bar{u}=(u_1, \ldots , u_s)$ is an ordered $\Z$-pseudo-basis for $M$ and let $M_i$ be the subgroup of $M$ generated by $u_i, \ldots, u_s$. The \emph {the period $e_i$} of $u_i$ is the order of the cyclic group $M_i/M_{i+1}$ if $M_i/M_{i+1}$ is finite, and we set $e_i=\infty$ if the corresponding quotient is infinite. 
\begin{prop} \label{basedef} Let $M$, $N$, $A$ and $B$ satisfy conditions of Proposition~\ref{big}.  Let the collection of prime ideals $\mathfrak{P}=(\p_1,\ldots ,
\p_m)$, $\p_i=id(\bar{a}_i)$, satisfy the usual conditions, say as in
Proposition~\ref{big}. Assume $\bar{c}=(c_1, \ldots , c_n)$ is a $\Z$-pseudo-basis of period $\bar{f}=(f_1, \ldots f_n)$ associated to the   $\mathfrak{P}$-series of $A$, and $\bar{u}=(u_1, \ldots, u_s)$ is a pseudo-basis of period $\bar{e}=(e_1, \ldots , e_s)$ associated with the $\mathfrak{P}$-series for $M$. Then there exists a
formula $$\phi_{\mathfrak{P},n}(x_1, \ldots , x_n,y_1, \ldots , y_s, \bar{y}_1, \ldots ,
\bar{y}_m)$$ defining in the two-sorted structure $M_A^*=\langle M, A, \bar{a}_1,\ldots ,
\bar{a}_m\rangle$ the set of all $\Z$-pseudo-bases of periods $\bar{f}$ and $\bar{e}$ associated
with the $\mathfrak{P}$-series for $A$ and $M$, respectively. Moreover the formula $\phi_{\mathfrak{P},n}(\bar{x}, \bar{y}, \bar{b}_1, \ldots, \bar{b}_m)$ defines in the model $N_B^*=\langle N,B, \bar{b}_1, \ldots \bar{b}_m\rangle$ the set of all $\Z$-pseudo-bases $\bar{d}$ and $\bar{v}$ of $N$ associated to the corresponding special $\mathfrak{Q}$-series of $B$ and $N$ uniformly for all models $N_B^*$ of $Th(M_A^*)$,  if $\bar{b}_1$, \ldots , $\bar{b}_m$ satisfy the formula $D_{\Lambda}$.  
\end{prop}

\begin{proof} By Proposition~\ref{big} the models
$\langle M_i, A_{\mathfrak{P}}, \bar{a}_i\rangle$ are definable in
$M_A^*$ uniformly with respect to all models $N_B^*$, with $N$ and $B$ satisfying the  hypotheses, of $Th(M_A^*)$. So it suffices to find a formula $\phi_i$ for each $i$,
defining a basis for $M_i/M_{i+1}$ of fixed period
$\bar{e}_i$.  the model
$\langle M_i, A_{\mathfrak{P}}\rangle$ is interpretable in $\langle
M,A\rangle$ with the help of any tuples of generating elements
$\bar{a}_1$, \ldots , $\bar{a}_m$ satisfying the formula
$D_{\Lambda}$. Since $M_{i+1}=\p_{i+1}M_i$, the model $\langle \ov{M}_i,A_i
\rangle$ where $\ov{M}_i=M_i/M_{i+1}$ and
$A_i=A_{\mathfrak{P}}/(\p_{i+1}\cap A_{\mathfrak{P}})$ is
obviously interpretable in $\langle M_i, A_{\mathfrak{P}}\rangle$ with the
help of $\bar{a}_{i+1}$. In the view of the fact that $A_i$ is
either $\mathbb{Z}$ or the finite field $\mathbb{Z}/p\mathbb{Z}$ and
the action of $A_i$ on
$\ov{M}_i$ is interpretable in $\langle M,A\rangle$ it is easy to write out a formula
defining all bases of $\ov{M}_i$ of given period $\bar{e_i}$ and
thus to construct the desired formula $\phi_i$. Again $\phi_i$ depends on the tuples $a_i$ as far as they satisfy $D_\Lambda$. So again by Proposition~\ref{big} the formulas $\phi_i$ define all $\Z$-pseudo-bases for the $N_i/N_{i+1}$ in a model $N_B^*$ of $Th(M_A^*)$  where the $N_i$ are defined in $N^*_B$ with the same formulas that define the $M_i$  in $M^*_A$.  
\end{proof}
Assume $A$ and $M$ satisfy the usual conditions, $\bar{c}=(c_1, \ldots , c_n)$ is a $\Z$-pseudo-basis of $A$ of period $\bar{f}=(f_1, \ldots f_n)$ and $\bar{u}=(u_1, \ldots , u_s)$ is a $\Z$-pseudo-basis of $M$ of period $\bar{e}=(e_1, \ldots , e_s)$. Then
\begin{enumerate}
\item for any $1\leq i,j\leq n$ there exist integers $s_k(c_i,c_j)$ such that $c_ic_j=\sum_{k=1}^ns_k(c_i,c_j)c_k$, 
\item for any $1\leq i \leq n$ and $1\leq j \leq s$ there exist integers $s'_k(c_i,u_j)$ such that $c_iu_j=\sum_{k=1}^ss'_k(c_i,u_j)u_k$,
\item for any $1 \leq i \leq n$ if $f_i< \infty $ then there exist integers $t_k(f_ic_i)$ such that $f_ia_i=\sum_{k=1}^n t_k(f_ic_i)c_i$,
\item for any $1\leq i \leq s$ if $e_i< \infty$ then there exists integers $t'_k(e_iu_i)$ such that $e_iu_i=\sum_{k=1}^s t'_k(e_iu_i)u_i$.
\end{enumerate}
The integers introduced above are called \emph{the structural constants} associated to the pseudo-bases $\bar{a}$ and $\bar{u}$. We assume an arbitrary but fixed ordering on the set of structure constants. It is easy to verify that $\langle M, A \rangle$ is determined up to isomorphism, as a two-sorted module, by the periods $\bar{e}$, $\bar{f}$ and the associated structure constants.

 Finally we are ready to complete the proof of Theorem~\ref{elemmod:thm}. 
 
 \noindent\emph{Proof of Theorem~\ref{elemmod:thm}.} From Proposition~\ref{basedef} we have the formula
$\varphi_{\mathfrak{P},n}$ which defines all $\mathbb{Z}$-pseudo-bases $\bar{c}$ and $\bar{u}$ of periods $\bar{f}$ and $\bar{e}$ for $A$ and $M$, respectively, in
$\langle M,A\rangle$. Again by Proposition~\ref{basedef} the same formula defines in $\langle N,B\rangle$ similar $\Z$-pseudo-bases $\bar{d}$ and $\bar{v}$ of $B$ and $N$. We need only to describe the structural constants associated with the pseudo-bases $\bar{c}$ and $\bar{u}$ for $A$ and $M$ respectively. This can be done by a formula, say $\psi_{A,M}$, of the language $L_2$ because all these constants are
integers and there are only finitely many of them. Obviously this implies that the $\Z$-pseudo-bases $(\bar{u},\bar{c})$ and $(\bar{v},\bar{d})$ are $\Z$-pseudo-bases of  
$\langle M,A\rangle$ and $\langle N, B\rangle$ respectively of the same periods and structure constants. So the theorem follows.

\qed

\noindent\emph{Proof of Theorem~\ref{elem-iso-alg:thm}.} The proof is entirely similar to that of Theorem~\ref{elemmod:thm}. In addition to the structure constants listed in items (1)-(4) above for a two-sorted module we need to describe the structure constants defining the multiplication for the ring $C$. Keeping the same notation as the proof of mentioned theorem and replacing $M$ by $C$ we need to describe the integers $t''_k(u_iu_j)$ and $t'''_k(u_ju_i)$ where $u_iu_j= \sum_{k=1}^s t''_k(u_iu_j)u_k$ and $u_ju_i= \sum_{k=1}^s t'''_k(u_ju_i)u_k$ since $C$ is not necessarily a commutative ring. Again these new structure constants are also integers and could be captured in the first-order theory of $C$. The new structure constants together with the ones from items (1)-(4) above describe $\langle C, A\rangle$ up to isomorphism by a single first-order formula $\phi_{C,A}$ of $Th(\langle C,A\rangle)$. Clearly if an algebra $\langle D, B\rangle$ from $\mathcal{A}$ satisfies $\phi_{C,A}$ then $\langle C, A \rangle\cong \langle D, B\rangle.$   

\qed

\section{Coordinatization of finitely
generated nilpotent groups admitting exponents in a binomial principal ideal domain}\label{coordinatization}
In this section we generalize Philip Hall's theorem (see \cite{hall}, Section 6) on coordinatization of finitely generated torsion-free nilpotent groups to finitely generated $A$-groups, where $A$ is a binomial PID. In the case that $G$ is a torsion-free $A$-group by \emph{$A$-coordinatization} we mean that one can choose a Mal'cev (canonical) basis $u_1, \ldots , u_n$, $u_i\in G$, with
respect to which any element $g\in G$ can be uniquely written as the
product
$$g=u_1^{t_1(g)}\ldots u_m^{t_m(g)}, \quad t_i(g)\in A,$$
where $t_i(g)$ is called the $i$-th coordinate of $g$ and the coordinates of
the product $gh$ are computed by the coordinates of $g$ and $h$ with
the help of some polynomials depending only on the group $G$ and the
chosen basis $\bar{u}=(u_1,\ldots , u_m)$. Even further we shall introduce the notion of a pseudo-basis for an
arbitrary finitely generated group over a binomial PID $A$, which allows
one to introduce $A$-coordinatization in the general case (even when
$G$ has torsion). 

Recall that we consider an $A$-group $G$ as two different structures. One as a pure group, where the corresponding language is denoted by $L$. The other, a two sorted structure $G_A=\langle G, A \rangle $, where the corresponding language is denoted by $L_1$. The ultimate aim, as should be clear by now, is to study to what extent $G_A$ can be recovered from $G$. For example ~\cite{MR4} studies the question for finitely generated nilpotent pro-$p$ groups,
i.e. when $A$ is the ring of $p$-adic integers $\mathbb{Z}_p$.

Throughout this section $A$ is a binomial PID, and
all the groups, subgroups and homomorphisms are $A$-groups,
$A$-subgroups and $A$-homomorphisms, respectively, i.e. we work in the
category of $A$-groups. 

Let $M$ be a finitely generated abelian $A$-group, (i.e. a finitely generated module
over $A$). Then $M$ admits a decomposition $A=Am_1 \oplus \ldots
\oplus Am_n$ into the direct sum of cyclic $A$-groups. Let
$Ann(m_i)=\{\alpha \in A:\alpha m_i=0\}$ be the annihilator ideal of $m_i$
and let $e_i$ be a fixed generator of $Ann(m_i)$. Then $e_i$ will be
called a \textit{period} of $m_i$ and denoted by $e(m_i)$ (if $Ann(m_i)=0$ we
let $e(m_i)= \infty$).
\begin{defn}The tuple $\bar{m}=(m_1,\ldots , m_n)$ of elements of the
abelian $A$-group $M$ is called a pseudo-basis of
period $e(\bar{m})=(e(m_1),\ldots ,e(m_n))$ of $M$.\end{defn}
For the ideal $Ann(m_i)$ we fix an element $d_i(x_0)$ in each coset
$x_0+Ann(m_i)$ and define a function (a section)
$d_i:A\rightarrow A$, which is constant on each coset of $Ann(m_i)$,
i.e. if $x -x_0\in Ann(m_i)$ then $d_i(x)=d_i(x_0)$. In particular if
$e_i=\infty$ then $Ann(m_i)=0$ and so $d_i(x)=x$. The tuple of
sections $d(\bar{m})=(d_1, \ldots , d_m)$ will be called a
\textit{a section of the pseudo-basis $\bar{m}$}.
\begin{rem} The pseudo-basis $\bar{m}$ and the section $d(\bar{m})$ give
an $A$-coordinatization of $M$, because any element $g\in M$ admits the unique
decomposition
$$g=m_1^{t_1(g)}\cdots m_n^{t_n(g)},$$
(in the multiplicative notation), where $t_i(g)=d_i(\alpha_i)$ for
certain elements $\alpha_i \in A$, $i=1, \ldots, n$, defined uniquely
modulo $Ann(m_i)$.\end{rem}
\begin{defn}Let $G$ be an arbitrary finitely generated nilpotent $A$-group. Let
\begin{equation}\label{eq1}G=G_1\geq \ldots \geq G_{c+1}=1\end{equation} be a
certain central series of $A$-subgroups of $G$. Let $\bar{v}_i$ be
the tuple of elements of $G_i$, the images of which form a pseudo-basis of
period $e(\bar{v_i})$ with the section $d(\bar{v}_i)$ of the
finitely generated $A$-group $G_i/G_{i+1}$, $i=1, \ldots , c$. Then the
tuple $\bar{u}=\bar{v}_1\cup \ldots \cup \bar{v}_c$ will be called a
pseudo-basis of $G$ associated with the series~\eqref{eq1}, and the
tuples $e(\bar{u})=e(\bar{v}_1)\cup \ldots \cup e(\bar{v}_c)$ and
$d(\bar{u})=d(\bar{v}_1)\cup \ldots \cup d(\bar{v}_c)$ are the period
and the section of the pseudo-basis $\bar{u}$.\end{defn}
\begin{prop}Let $G$ be a finitely generated $A$-group and $\bar{u}=(u_1,\ldots ,
u_m)$ a pseudo-basis for $G$ of period $e(\bar{u})$ with the section $d(\bar{u})$. Then
\begin{enumerate}
\item any element $g\in G$ can be represented uniquely in the form
\begin{equation}\label{eq2} g=u_1^{t_1(g)}\ldots u_m^{t_m(g)},\end{equation}
where for each $i=1,\ldots , m$, $t_i(g)=d_i(\alpha_i)$ for a certain element $\alpha_i\in A$, defined uniquely modulo the ideal $Ann(u_i)=Ae(u_i)$;
\item The $A$-subgroups $G_i=\langle u_i,\ldots, u_m\rangle_A$ form a central
series $$G=G_1> \ldots > G_m > 1$$ in
$G$.\end{enumerate}\end{prop}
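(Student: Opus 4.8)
The plan is to establish (1) first, by induction on the length $c$ of the central series $G=G_1\geq\cdots\geq G_{c+1}=1$ from which the pseudo-basis $\bar u$ was built, reducing each inductive step to the abelian (module) case that is already handled by the remark that a base together with a section yields a $K$-coordinatization of a finitely generated abelian $K$-group; then to deduce (2) from (1) by identifying the $K$-subgroups $\langle u_i,\ldots,u_m\rangle_K$ with terms of a refinement of the original series.

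For (1), the base case $c=1$ is immediate: there $G=G/G_2=G_1/G_2$ is abelian, $\bar u=\bar v_1$ is a base of it with section $d(\bar u)$, and the claim is precisely the above-mentioned remark. For the inductive step I would write $\bar v_1=(u_1,\dots,u_{k_1})$ and set $\bar w=\bar v_2\cup\cdots\cup\bar v_c$; this $\bar w$ is a pseudo-basis of $G_2$ associated with the shorter central series $G_2\geq\cdots\geq G_{c+1}=1$. Given $g\in G$, I would first decompose its image in the abelian $K$-group $G_1/G_2$ over the base $\bar v_1$, uniquely, as $\prod_{j=1}^{k_1}\ov{u_j}^{\,d_j(\alpha_j)}$ with $\alpha_j$ unique modulo $Ann(u_j)$, lift this to $g=\bigl(\prod_{j=1}^{k_1}u_j^{\,d_j(\alpha_j)}\bigr)g'$ with $g'\in G_2$, and then apply the inductive hypothesis to $g'$; this gives existence of the form \eqref{eq2}. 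For uniqueness, given $u_1^{t_1}\cdots u_m^{t_m}=u_1^{s_1}\cdots u_m^{s_m}$ I would project to $G_1/G_2$, where all factors of index $>k_1$ vanish, obtaining two base decompositions of the same element of $G_1/G_2$; uniqueness in the abelian case forces the first $k_1$ exponents to agree, and after cancelling the common left factor $u_1^{t_1}\cdots u_{k_1}^{t_{k_1}}$ the inductive hypothesis on $G_2$ handles the remaining equality.

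For (2), I would first show, using (1), that $\langle\bar v_j\cup\cdots\cup\bar v_c\rangle_K=G_j$ for each block index $j$: the inclusion $\subseteq$ is clear because all these generators lie in $G_j$, and $\supseteq$ follows by applying (1) to the $K$-group $G_j$ with its central series $G_j\geq\cdots\geq G_{c+1}=1$ and pseudo-basis $\bar v_j\cup\cdots\cup\bar v_c$, which writes every element of $G_j$ as a product of $K$-powers of these generators. Then for a fixed index $i$, letting $j$ be the block containing $u_i$, one obtains the sandwich $G_{j+1}\subseteq G_i=\langle u_i,\dots,u_m\rangle_K\subseteq G_j$, and sees that modulo $G_{j+1}$ the subgroup $G_i$ is the $K$-submodule of the abelian group $G_j/G_{j+1}$ spanned by a suffix of the base $\bar v_j$, while $G_{i+1}$ is spanned by the shorter suffix obtained by dropping $\ov{u_i}$; hence $G_i/G_{i+1}$ is the single cyclic $K$-group $K\ov{u_i}$, which is nonzero (base elements of nontrivial quotients being nonzero), giving strictness $G_i>G_{i+1}$. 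Normality of $G_i$ in $G$ follows since $G_{j+1}\trianglelefteq G$ and $G_i/G_{j+1}$, being a subgroup of the central subgroup $G_j/G_{j+1}$ of $G/G_{j+1}$, is normal there; and $[G,G_i]\subseteq[G,G_j]\subseteq G_{j+1}\subseteq G_{i+1}$ in all cases, so $(G_i)$ is a central series.

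I expect the only genuinely delicate point to be the identification $\langle\bar v_j\cup\cdots\cup\bar v_c\rangle_K=G_j$ used in (2): a careless argument would be circular, so it is essential that part (1) has already been proved for an \emph{arbitrary} finitely generated $K$-group carrying a pseudo-basis, allowing it to be applied to the subgroup $G_j$ with its induced central series. Everything else — the abelian case, the normal-subgroup correspondence for $G/G_{j+1}$, and keeping the sections $d_i$ and the ``$\bmod\ Ann(u_i)$'' clauses aligned between $G$ and its quotients — is routine bookkeeping.
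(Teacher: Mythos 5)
Your proof is correct, but it runs the induction in the opposite direction from the paper. The paper argues by induction on $m$, the length of the pseudo-basis: it observes that $G_m=\langle u_m\rangle_K$ is central (being contained in the last term of the central series), that the images of $u_1,\ldots,u_{m-1}$ form a pseudo-basis of $G/G_m$ with the same periods and sections, and then pulls statements (1) and (2) back from the quotient $G/G_m$ to $G$; the reduction is thus ``bottom-up, one basis element at a time, through a central quotient.'' You instead induct on $c$, the length of the given central series, peeling off the whole top block $\bar v_1$ at once: you coordinatize the abelian quotient $G_1/G_2$ directly via the remark on bases with sections, and restrict to the subgroup $G_2$ with pseudo-basis $\bar v_2\cup\cdots\cup\bar v_c$, which forces you to prove the auxiliary identification $\langle \bar v_j\cup\cdots\cup\bar v_c\rangle_K=G_j$ (correctly, by applying part (1) to $G_j$ with its induced series) in order to get (2). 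What the paper's route buys is that (2) comes essentially for free from the inductive structure, at the cost of checking that periods and sections survive passage to $G/\langle u_m\rangle_K$; what your route buys is that the abelian coordinatization remark is used verbatim and the only delicate point is the one you flag, namely that (1) must be available for an arbitrary finitely generated $K$-group before it is applied to $G_j$ (and one should note in passing that $G_2$, resp. $G_j$, is finitely generated as an extension of finitely generated $K$-groups). The only point glossed in both your argument and the paper's is strictness of the chain $G_1>\cdots>G_m>1$, which tacitly assumes each $u_i$ has nontrivial image in its block quotient; your parenthetical ``base elements of nontrivial quotients being nonzero'' is the right remark and matches the paper's implicit convention.
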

\begin{proof}The proof is by induction on $m$. The case $m=1$ is
trivial. It is clear from the definition of a pseudo-basis that
$G_m=\langle u_m\rangle_A$ lies in the center of $G$ and the images $\bar{u}_1$,
\ldots , $\bar{u}_{m-1}$ form a pseudo-basis of the group $G/G_m$ of
period $(e(u_1), \ldots , e(u_{m-1}))$ with the section $(d_1,
\ldots , d_{m-1})$. By induction $G/G_m$ satisfies  conditions
(1) and (2) of the statement of the proposition. Now one can easily
prove (1) and (2) for the entire group $G$.

\end{proof}
Fix an arbitrary pseudo-basis $\bar{u}=(u_1, \ldots, u_m)$ with the section $d(\bar{u})=(d_1, \ldots, d_m)$ of period $e(\bar{u})=(e_1,
\ldots , e_m)$. The elements $t_i(g)=d_i(\alpha_i)$ from
decomposition~\eqref{eq2} are called \emph{the coordinates of $g$} in the
pseudo-basis $\bar{u}$. We shall also write equation~\eqref{eq2} in the
vector form:
$$g=\bar{u}^{t(g)}, \quad \textrm{where  }t(g)=(t_1(g), \ldots
t_m(g)).$$
\begin{defn} the set of coordinates
$$\Gamma(\bar{u})=\{t_k([u_i,u_j]),t_k(u_i^{e_i}): 1 \leq i,j,k\leq
m\}$$
is called the set of structural constants of the pseudo-basis $\bar{u}$
and the subring $A(\bar{u})=\mathbb{Z}(\Gamma(\bar{u}))\leq A$,
generated over $\mathbb{Z}$ by the set $\Gamma(\bar{u})$ is called the
definition ring of the basis $\bar{u}$. \end{defn}
From now on we shall represent
$\Gamma(\bar{u})$ in an arbitrary but fixed way in the form of a
tuple $\Gamma(\bar{u})=(\gamma_1, \ldots , \gamma_s)$ so that by
the index $i$ one can uniquely determine which coordinate of which
element $\gamma_i$ is.

For any section $d_i:A\rightarrow A$ and for a fixed period
$e_i$ ($e_i$ is a generator of $Ann(u_i)$) we define the function
$r_i:A\rightarrow A$ from the equality $x=e_ir_i(x)+d_i(x)$. Since $A$
has no zero-divisors $r_i(x)$ is defined uniquely.

Let $R_i$, $D_i$, $B_i$, $i=1,\ldots m$, be unary function
symbols, $+$ and $\cdot$ be binary function symbols and $c_n$ and
$b_{\gamma}$ be constant symbols for $n\in \mathbb{N}$ and $\gamma\in
\Gamma(\bar{u})$. Consider the signature
$$\mathfrak{F}_m=\langle R_i, D_i, B_n, + , ., c_n, b_{\gamma}:n\in
\mathbb{N}, \gamma\in \Gamma(\bar{u}), 1\leq i \leq m \rangle.$$
Denote by $\mathfrak{TF}_m$ the set of all possible terms over
$\mathfrak{F}_m$.
\begin{defn}We shall say that a term $t(x_1,\ldots , x_n) \in
\mathfrak{TF}_m$ computes a function $f:A^n\rightarrow A$ with respect
to a pseudo-basis $\bar{u}$ of period $e(\bar{u})$ with the
section $d(\bar{u})$ and the set of structural constants $\Gamma
(\bar{u})$ if after the substitutions: $R_i(x)\rightarrow r_i(x)$,
$D_i(x)\rightarrow d_i(x)$, $B_n(x)\rightarrow \binom{x}{n}$, $x+y
\rightarrow x+y$, $xy \rightarrow xy$, $c_n\rightarrow n$ and
$b_{\gamma}\rightarrow \gamma$ the obtained operation $t^u(x_1, \ldots
, x_n):A^n \rightarrow A$ is equal to $f$.\end{defn}
\begin{thm}[Uniform Coordinatization]\label{UC}
For any natural number $m$ the following statements are true.
\begin{enumerate}
\item There exist terms $M_1, \ldots , M_m \in \mathfrak{TF}_m$
computing respectively the multiplication coordinate functions $t_1(gh)$, \ldots
, $t_m(gh)$ with  respect to any pseudo-basis $\bar{u}=(u_1, \ldots ,
u_m)$ of any finitely generated $A$-group over any binomial PID, $A$.
\item There exist terms $D_1, \ldots , D_m \in \mathfrak{TF}_m$,
computing respectively the exponentiation coordinate functions
$t_1(g^{\alpha})$, \ldots , $t_m(g^\alpha)$ with respect to any
pseudo-basis $\bar{u}=(u_1,\ldots, u_m)$ of any finitely generated $A$-group $G$ over
any binomial PID, $A$.\end{enumerate}\end{thm}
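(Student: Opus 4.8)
The plan is to prove both statements simultaneously by induction on $m$, reducing the case of an $m$-element pseudo-basis to that of the $(m-1)$-element pseudo-basis of the central quotient $G/G_m$, exactly as in the proof of the previous proposition. Fix a finitely generated $K$-group $G$ with pseudo-basis $\bar{u}=(u_1,\ldots,u_m)$, section $d(\bar u)$, period $e(\bar u)$ and structural constants $\Gamma(\bar u)$. For $m=1$ the group $G=\langle u_1\rangle_K$ is cyclic, and the coordinate functions are computed directly: $t_1(u_1^\alpha u_1^\beta)=D_1(\alpha+\beta)$ and $t_1((u_1^\alpha)^\beta)=D_1(\alpha\beta)$, which are visibly terms over $\mathfrak{F}_1$ (here the constant $t_1(u_1^{e_1})\in\Gamma(\bar u)$ appears only implicitly through $D_1$, since $u_1^{e_1}=1$ in the cyclic group; in the inductive step, however, $u_i^{e_i}$ need not be trivial and this is where the constants $b_\gamma$ genuinely enter).

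For the inductive step, write $g=\bar u^{t(g)}$, $h=\bar u^{t(h)}$, and push $u_1$ to the front: using $u_1 u_j = u_j u_1 [u_1,u_j]$ repeatedly one gets $gh = u_1^{\,t_1(g)+s}\cdot w$, where $w\in G_2=\langle u_2,\ldots,u_m\rangle_K$ and $s$ together with the coordinates of $w$ are obtained from $t(g)$, $t(h)$ and the coordinates $t_k([u_1,u_j])\in\Gamma(\bar u)$ by finitely many applications of addition, multiplication, the binomial functions $\binom{\cdot}{n}$ (these arise when collecting powers of a commutator $[u_1,u_j]$ past further factors, exactly as in P.~Hall's collection process — this is the role of the symbols $B_n$), and the sections $d_i$, $r_i$ (needed to normalize an exponent of $u_i$ modulo $Ann(u_i)=Ke_i$ and to carry the overflow $r_i$ into $G_{i+1}$). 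All of these are available in $\mathfrak{F}_m$. Crucially, the collection of $w$ into the normal form $u_2^{\,t_2}\cdots u_m^{\,t_m}$ is precisely the multiplication in $G/\langle u_1\rangle$-image situation, i.e.\ it is handled by the terms $M_2,\ldots,M_m$ furnished for the $(m-1)$-pseudo-basis $(\bar u_2,\ldots,\bar u_m)$ of $G_2$ by the induction hypothesis; substituting these in gives the $M_i$ for $G$. Finally one handles the leftmost coordinate: $t_1(gh)=d_1(t_1(g)+s)$, which is $D_1$ applied to a term, and one must also feed the overflow $r_1(t_1(g)+s)$, multiplied appropriately using the constants $t_k(u_1^{e_1})\in\Gamma(\bar u)$, into the $G_2$-part before invoking the $M_i$ — again only the allowed operations are used. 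Statement (2) follows the same pattern: $g^\alpha=\bar u^{t(g^\alpha)}$ is computed by writing $g^\alpha=(u_1^{t_1(g)}w)^\alpha$, expanding via the $A$-group power laws (here $\binom{\alpha}{n}$ genuinely appears, as in Hall's formulas for $K$-powers in nilpotent $K$-groups), and reducing the $G_2$-tail by the inductive terms $D_2,\ldots,D_m$.

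The main obstacle I anticipate is bookkeeping: making precise that the collection process terminates in a number of steps bounded by a function of $m$ alone (independent of $G$ and $K$), so that the resulting object really is a single fixed term in $\mathfrak{TF}_m$ and not merely an algorithm. This amounts to a careful induction showing that moving a factor $u_i^{\alpha}$ past $u_j$ ($i<j$) produces only factors $u_k$ with $k\ge i$ together with commutators living in strictly higher terms of the central series, so that the "weight'' of the unprocessed part strictly decreases; nilpotency of length $c$ bounds the depth, and the pseudo-basis has fixed length $m$, so finitely many term-operations suffice. Once this termination/uniformity is in hand, the rest is the routine verification that each elementary move (swap two generators, absorb a commutator, collect a power using $\binom{\cdot}{n}$, normalize an exponent modulo a period using $D_i$, carry the overflow using $R_i$) is expressible by a term over $\mathfrak{F}_m$, and that the constants invoked all lie in $\Gamma(\bar u)$ — which is exactly why $\Gamma(\bar u)$ was defined to contain all $t_k([u_i,u_j])$ and all $t_k(u_i^{e_i})$.
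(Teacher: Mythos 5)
Your proposal follows essentially the same route as the paper: induction on the length $m$ of the pseudo-basis, stripping off $u_1$, collecting via Hall's process with the binomial symbols $B_n$, using the sections $d_1,r_1$ and the structure constants of $u_1^{e_1}$ to handle the period overflow, and invoking the inductive terms for the subgroup $G_2=\langle u_2,\ldots,u_m\rangle_K$ with pseudo-basis $(u_2,\ldots,u_m)$ (not a quotient, as your opening sentence suggests). The "main obstacle" you flag is exactly what the paper dispatches with the Hall--Petresco formula, which gives the collected form in one closed expression truncated at weight $m$ --- legitimate because the central series $G=G_1>\cdots>G_m>1$ attached to the pseudo-basis bounds the nilpotency class by $m$, so the resulting terms depend on $m$ alone.
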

Before we proceed to the proof of the theorem  we state a
definition. Let $W(x_1, \ldots , x_m, y_1 , \ldots, y_m, z_1, \ldots
z_m)=W(\bar{x},\bar{y},\bar{z})$ be an arbitrary group word. Let
$g=W(u_1^{\alpha_1},\ldots, u_m^{\alpha_m},u_1^{\beta_1}, \ldots,
u_m^{\beta_m},u_1, \ldots ,u_m)=W(\bar{u}^{\alpha},\bar{u}^{\beta},
\bar{u})$ be an element of a $A$-group $G$ with the pseudo-basis
$\bar{u}$. It is clear that the coordinates $t_i(g)$ in the basis
$\bar{u}$ are functions $f_i(\alpha,\beta)$ of $\alpha=(\alpha_1,
\ldots, \alpha_m)$ and $\beta=(\beta_1, \ldots , \beta_m)$. We shall
say that coordinates of $g$ in the basis $\bar{u}$ are computed
$m$-\textit{universally}, and the word $W$ itself is
$m$-\textit{universal}, if there exist terms $T_{1,W}, \ldots, T_{m,W}$
computing the functions $f_1(\alpha,\beta), \ldots , f_m(\alpha,
\beta)$ with respect to any pseudo-basis $\bar{u}=(u_1,\ldots u_m)$ in
any $A$-group over any binomial PID $A$. In part (1) of the theorem it
is demanded to prove that the word $W_0=x_1\ldots x_my_1\ldots y_m$ is
$m$-universal. Similarly part (2) of the theorem requires a proof of
$m$-universality of $W_1=(x_1\cdots x_m)^{\lambda}$, $\lambda$ being a
variable for elements of the ring $A$ (here $W_1$ is a word in the
two-sorted language $L_1$).
\begin{lem}The following statements are true:
\begin{enumerate}
\item if the word $W_0(\bar{x},\bar{y})$ is $m$-universal then any
group word $W(\bar{x}_1,\ldots ,\bar{x}_n)$ of the language $L$ is $m$-universal,
\item if the words $W_0(\bar{x},\bar{y})$ and $W_1(\bar{x},\lambda)$
are $m$-universal then any word $$W(\bar{x}_1,\ldots , \bar{x}_n, \lambda_1, \ldots, \lambda_k)$$ of the language $L_1$ is $m$-universal.
\end{enumerate}\end{lem}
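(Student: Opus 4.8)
The plan is to prove both statements by structural induction on the group word, reducing everything to the two hypothesized $m$-universal words $W_0(\bar{x},\bar{y})$ and $W_1(\bar{x},\lambda)$. The key observation is that any group word is built up from generators by finitely many multiplications, inversions, and (in the $L_1$ case) exponentiations, so it suffices to show that $m$-universality is preserved under each of these operations and under substitution of $m$-universal words into one another.

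For statement 1, I would first note that a single generator $x_i$ (respectively $y_i$, $z_i$), viewed as a word in $\bar{x}$, is trivially $m$-universal: its coordinate functions are just the appropriate projections $f_k(\alpha,\beta) = \alpha_k$ or $f_k(\alpha,\beta) = \beta_k$, each computed by a term of $\mathfrak{TF}_m$ (a variable, or a constant in the case of the fixed basis elements). Next, given two words $U$ and $V$ that are $m$-universal with computing tuples $(T_{1,U},\dots,T_{m,U})$ and $(T_{1,V},\dots,T_{m,V})$, I want to show $UV$ is $m$-universal. Since $\bar{u}^{t(g)}$ is the canonical form, writing $g = U(\bar{u}^{\bar{\alpha}},\dots)$ and $h = V(\bar{u}^{\bar{\alpha}},\dots)$ we have $g = \bar{u}^{\,\bar{T}_U}$ and $h = \bar{u}^{\,\bar{T}_V}$; then $gh = \bar{u}^{\,\bar{T}_U}\bar{u}^{\,\bar{T}_V}$, and by $m$-universality of $W_0$ applied with the pseudo-basis $\bar{u}$, the coordinates of $gh$ are obtained by substituting $\bar{T}_U$ and $\bar{T}_V$ into the multiplication terms $M_1,\dots,M_m$ of $W_0$. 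This composition of terms is again a term in $\mathfrak{TF}_m$, so $UV$ is $m$-universal. For inversion, $g^{-1} = \bar{u}^{\,\bar{T}_U}$ raised to the power $-1$; I would handle this either by invoking $W_1$ with $\lambda = -1$ (here $-1 = c_0 - c_1$ or simply a constant term), or — if we wish to avoid $L_1$ in statement 1 — by noting that $g^{-1}$ can be obtained by repeated multiplication using the finite exponent structure, but the cleanest route is to observe that in the proof of part 1 of Theorem~\ref{UC} one only needs $W_0$, and inversion of a coordinate vector is itself expressible via the multiplication terms by solving $\bar{u}^{\bar{s}}\bar{u}^{\bar{t}} = 1$ recursively from the bottom of the central series. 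An induction on the length of the word $W$ then closes statement 1.

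For statement 2, the induction is the same but now the word may involve exponentiation by ring variables $\lambda_1,\dots,\lambda_k$. The new inductive step is: if $U(\bar{x}_1,\dots,\bar{x}_n,\lambda_1,\dots,\lambda_k)$ is $m$-universal with computing terms $\bar{T}_U$, then $U^{\lambda_{k+1}}$ is $m$-universal, where $\lambda_{k+1}$ is a fresh ring variable. Indeed $g = \bar{u}^{\,\bar{T}_U}$, so $g^{\lambda_{k+1}} = (\bar{u}^{\,\bar{T}_U})^{\lambda_{k+1}} = (u_1^{T_{1,U}}\cdots u_m^{T_{m,U}})^{\lambda_{k+1}}$, which is exactly an instance of $W_1(\bar{x},\lambda)$ with $x_i \mapsto u_i^{T_{i,U}}$ and $\lambda \mapsto \lambda_{k+1}$; by $m$-universality of $W_1$ its coordinates are computed by substituting the $T_{i,U}$ and the variable $\lambda_{k+1}$ into the exponentiation terms $D_1,\dots,D_m$ of $W_1$, yielding terms of $\mathfrak{TF}_m$ again. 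Combined with the multiplication and inversion steps from part 1, every word of $L_1$ is reached by a finite induction.

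I expect the main obstacle to be purely bookkeeping rather than conceptual: one must be careful that the terms obtained by these substitutions genuinely lie in $\mathfrak{TF}_m$ — in particular that the sections $d_i$, the remainder functions $r_i$, and the binomial symbols $B_n$ that appear inside $M_j$ and $D_j$ compose correctly, and that constants $c_n$ and $b_\gamma$ introduced at each step are among the allowed constants of $\mathfrak{F}_m$. Since $\mathfrak{TF}_m$ is by definition closed under composition of terms, this is automatic, but the proof should state it explicitly. A secondary subtlety is handling inversion in statement 1 without prematurely invoking exponentiation; the safest phrasing is to note that $x^{-1}$ for a single generator can be computed from $W_0$ by downward recursion on the central series $G = G_1 > \cdots > G_m > 1$ furnished by the pseudo-basis, since $u_m^{-1}$ has an explicit coordinate (governed by $e_m$ and the section $d_m$) and then one lifts by solving $\bar{u}^{\bar s}\bar{u}^{\bar t}=1$ one coordinate at a time using the already-established $m$-universality of $W_0$.
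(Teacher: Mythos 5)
Your overall strategy---induction on the structure of the word, reducing the inductive step to the hypothesized $m$-universality of $W_0$ (and of $W_1$ for part 2) and closing under composition of terms---is exactly the paper's argument: its entire proof is the sentence ``induction on the number of letters in the word $W$.'' However, two steps as you wrote them would not go through and need repair.

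First, the base case is not ``trivially projections.'' A letter is evaluated at $u_i^{\alpha}$ with $\alpha\in K$ arbitrary, and when $e_i\neq\infty$ the canonical coordinates of $u_i^{\alpha}$ are not the tuple $(0,\ldots,\alpha,\ldots,0)$: the $i$-th coordinate is $d_i(\alpha)$, and the coordinates beyond the $i$-th are those of $(u_i^{e_i})^{r_i(\alpha)}\in G_{i+1}$, which are nontrivial functions of $\alpha$. The repair uses precisely the available hypothesis: $m$-universality of $W_0$ with the second exponent tuple specialized to $\bar{0}$ (the constant $c_0$) yields terms computing the canonical coordinates of $\bar{u}^{\beta}$ for an \emph{arbitrary} tuple $\beta$, in particular of $u_i^{\alpha}=\bar{u}^{(0,\ldots,\alpha,\ldots,0)}$; so canonicalizing a letter is a specialization of the multiplication terms $M_1,\ldots,M_m$, not a projection.

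Second, the inversion step. Your fallback of ``solving $\bar{u}^{\bar{s}}\bar{u}^{\bar{t}}=1$ recursively from the bottom of the series'' is not a term-level construction: already the top coordinate forces $t_1=d_1(-s_1)$-type expressions, so it neither avoids additive inverses of exponents nor, by itself, produces elements of $\mathfrak{TF}_m$ (solving an equation is not a term unless you exhibit the solution as one). The clean treatment is to write the group word so that inversion occurs only on letters, observe that an inverted letter $x^{-1}\mapsto u_i^{-\alpha}$ is again a power of a basis element whose exponent is a term in $\alpha$ (the signature is evidently intended to include integer constants/negation---the paper's own proof of Theorem~\ref{UC} freely writes exponents such as $-\binom{x}{m}$), and then proceed exactly as in the positive-letter case by composing with the $M_k$; a general subword inverse $U^{-1}$ is the reversed product of inverted letters, so no separate ``inverse of a coordinate vector'' operation is needed. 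With these two corrections your multiplication step (composition of the $M_k$) and exponentiation step (composition with the $D_k$ from $W_1$) give precisely the paper's induction.
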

\begin{proof}The proof is by induction on the number of letters in the
word $W$.

\end{proof}

  \begin{proof}[Proof of Theorem~\ref{UC}]
We proceed with induction on the length of the pseudo-basis $\bar{u}=(u_1, \ldots , u_m)$. Let $\alpha=(\alpha_1,\ldots , \alpha_m)$ and $\beta=(\beta_1,\ldots , \beta_m)$ be the coordinates of the elements $x$ and $y$:
$$x=u_1^{\alpha_1} \cdots u_m^{\alpha_m},\quad y=u_1^{\beta_1}\cdots u_m^{\beta_m},$$
in vector notation $x=\bar{u}^{\alpha}$, $y=\bar{u}^{\beta}$. Then for $m=1$ we obtain
$$t_1(xy)=d_1(\alpha_1+\beta_1), \quad t_1(x^{\alpha})=d_1(\alpha_1\alpha).$$
Now let $\bar{u}=u_1 \cup \bar{v}$, where $\bar{v}=(u_2, \ldots , u_m)$. We have
\begin{equation*}\begin{split}
xy=&u_1^{\alpha_1}\ldots u_m^{\alpha_m}u_1^{\beta_1}\ldots u_m^{\beta_m}\\
&=u_1^{\alpha_1+\beta_1}(u_1^{-\beta_1}u_2u_1^{\beta_1})^{\alpha_2}\ldots (u_1^{-\beta_1}u_mu_1^{\beta_1})^{\alpha_m}u_2^{\beta_2}\ldots u_m^{\beta_m}\\
&= u_1^{\alpha_1+\beta_1}\prod_{i=2}^m(u_1^{-\beta_1}u_iu_1^{\beta_1})^{\alpha_i}\prod_{i=2}^mu_i^{\beta_i}\end{split}\end{equation*}
In the case $e_1\neq \infty$ it is necessary to rewrite  the first factor in the following way:
$$u_1^{\alpha_1+\beta_1}=u_1^{d_1(\alpha_1+\beta_1)}(u_1^{e_1})^{r_1(\alpha_1+\beta_1)}=u_1^{d_1(\alpha_1+\beta_1)}(\bar{v}^{\gamma})^{r_1(\alpha_1+\beta_1)}$$
where $\gamma$ is the tuple of structural constants associated to $u_1^{e_1}$ from $\Gamma(\bar{u})$.
For proving the theorem it suffices to verify the $m$-universality of
factors of the type $(\bar{v}^{\gamma})^{r_1(\alpha_1+\beta_1)}$ and
$u_1^{-\beta_1}u_iu_1^{\beta_1}$. For the first one this is
true by induction, since $|\bar{v}|=m-1$, $\gamma$ is just a tuple of structure constants computable by terms of $\mathfrak{TF}_m$ and the function $r_1(\alpha_1+\beta_1)$
is computed by a term from $\mathfrak{TF}_m$.

It remains to consider the case of $u_1^{-\beta}u_iu_1^{\beta_1}$,
$i\geq 2$. For any $x \in A$ we have,
$$u_1^{-x}u_iu_1^x=u_1^{-x}u_iu_1^xu_i^{-1}u_i=u_1^{-x}(u_iu_1u_i^{-1})^xu_i.$$
Then,
\begin{equation*}\begin{split}
(u_iu_1u_i^{-1})^x &=(u_1[u_1,u_i^{-1}])^x\\
&= u_1^x[u_1,u_i^{-1}]^x\tau_m^{-\binom{x}{m}}\cdots \tau_2^{-\binom{x}{2}},\end{split}\end{equation*}
by the Hall-Petresco formula, where $\tau_k$ is a fixed product of
simple commutators of length no less than $k$ of elements $u_1$ and
$[u_1,u_i^{-1}]$. Consequently,
$$u_1^{-x}u_iu_1^x=[u_1,u_i^{-1}]^x \tau_m^{-\binom{x}{m}}\cdots
\tau_2^{-\binom{x}{2}}u_i.$$
Now it is sufficient to prove that the elements $[u_1,u_i^{-1}]$,
$\tau_1,\ldots , \tau_m$ are $m$-universal. We have
$$[u_1,u_i^{-1}]=u_i[u_i,u_1]u_i^{-1}=u_i\bar{v}^{\delta}u_i^{-1},$$
where $\delta$ is a tuple of structural constants from
$\Gamma(\bar{u})$. By induction the word $u_i\bar{v}^{\delta}u_i^{-1}$
is $(m-1)$-universal and hence $[u_1,u_i^{-1}]$ is $m$-universal.

Since $\tau_i$ is a fixed product of simple commutators of the
elements $u_1$ and $[u_i,u_1]$ it suffices to show a way of uniform
elimination of all occurrences of $u_1$ into an arbitrary commutator
$[g_1,\ldots , g_n]$, $g_i$ being either $u_1$ or a fixed
$(m-1)$-universal element from $\langle u_2, \ldots, u_m\rangle
_A$. For this purpose it suffices to prove that if
$g=\bar{v}^{t(g)}$ then the coordinates of the commutator $[g,u_1]$
(as functions of $t(g)$) are computed by some terms from
$\mathfrak{TF}_m$. Let the decomposition of $g$ by the base $\bar{v}$
begin with $u_i^x$, $i\geq 2$, i.e. $g=u_i^xf$. Then
$$[g,u_1]=[u_i^xf,u_1]=[u_i^x,u_1][[u_i^x,u_1],f][f,u_1].$$
Then
\begin{equation}\begin{split}\label{eq3}
[u_i^x,u_1]&= u_i^{-x}u_1^{-1}u_i^{x}u_1\\
&=u_i^{-x}(u_1^{-1}u_iu_1)^x\\
&=[u_i,u_1]^x\tau_2(u_i^{-1},u_1^{-1}u_iu_1)^{\binom{x}{2}}\cdots
\tau_m(u_i^{-1},u_1^{-1}u_iu_1)^{\binom{x}{m}}.\end{split}\end{equation}
The coordinates of the elements $[u_i,u_1]$ and
$u_1^{-1}u_iu_1=u_i[u_i,u_1]$ are computed by the terms from
$\mathfrak{TF}_m$, then by induction (everything is computed in the
basis $\bar{u}$) the coordinates of each factor in the product
appeared in the last line of~\eqref{eq3}, and hence the coordinates
of the whole product are computed by terms from $\mathfrak{TF}_m$. Now
it remains to notice that the decomposition of $[f,u_1]$ is obtained
by induction. As the final result we obtain the decomposition of $xy$
into the product of $m$-universal elements lying in the group
$G_2=\langle u_2, \ldots , u_m\rangle_A$. By induction their product
is computed in the base $\bar{v}$ $(m-1)$-universally, which proves
part (1) of the statement of the theorem.

Now consider exponentiation:
$$(u_1^{\alpha_1}\ldots
u_m^{\alpha_m})^{\lambda}=u_1^{\alpha_1\lambda} \ldots
u_m^{\alpha_m\lambda}\tau_m^{-(^{\lambda}_m)}\ldots
\tau_2^{-(^{\lambda}_2)},$$
$\tau_i$ being a product of fixed commutators of weight $\geq i$ of
the elements $u_1^{\alpha_1},\ldots , u_m^{\alpha_m}$. According to
part 1, already proved, the elements $\tau_1$, \ldots ,
$\tau_m$ are $m$-universal and lie in the group $G_2$. Then by
induction their powers are $(m-1)$-universal and again according to
part 1 the entire product is $m$-universal, which proves part 2 of
the theorem. The theorem is proved.

\end{proof}
As a corollary of the theorem we immediately obtain the following
theorem.
\begin{thm}Two finitely generated $A$-groups $G$ and $H$ are $A$-isomorphic if and
only if they have pseudo-bases of the same period with the same set of
structural constants.\end{thm}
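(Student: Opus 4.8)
The plan is to derive this statement as a direct corollary of the Uniform Coordinatization Theorem (Theorem~\ref{UC}) together with the normal form proposition that precedes it. The forward implication is the easy one: a $K$-isomorphism $\phi\colon G\to H$ carries a pseudo-basis $\bar{u}=(u_1,\dots,u_m)$ of $G$ associated with a central $K$-series $(G_i)$ to $\bar{w}=(\phi u_1,\dots,\phi u_m)$, which is a pseudo-basis of $H$ associated with $(\phi G_i)$. Since $\phi$ maps $Ann(u_i)$ onto $Ann(w_i)$, the periods agree; taking for $\bar{w}$ the sections already fixed for $\bar{u}$ (legitimate, since the annihilator ideals coincide) and using that $\phi$ commutes with commutation and with $K$-exponentiation, the structural constants of $\bar{w}$ coincide with those of $\bar{u}$.

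For the converse, suppose $\bar{u}$ (for $G$) and $\bar{w}$ (for $H$) are pseudo-bases of the common period $\bar{e}=(e_1,\dots,e_m)$ with identical structural constants. Because $Ann(u_i)=Ke_i=Ann(w_i)$, I would fix a single system of sections $d=(d_1,\dots,d_m)$ (together with the derived functions $r_i$) and use it for both pseudo-bases; then, by the preceding proposition, the normal forms of the elements of $G$ and of $H$ are each parametrized bijectively by the set $d_1(K)\times\cdots\times d_m(K)$. Define $\phi\colon G\to H$ by sending the element of $G$ with normal-form exponent tuple $(s_1,\dots,s_m)$ to the element of $H$ with the same exponent tuple; by uniqueness of the normal form, $\phi$ is a well-defined bijection.

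It then remains to verify that $\phi$ respects multiplication and $K$-exponentiation. By Theorem~\ref{UC} there are terms $M_1,\dots,M_m\in\mathfrak{TF}_m$ computing the coordinates of a product and terms $D_1,\dots,D_m\in\mathfrak{TF}_m$ computing the coordinates of a power, valid for every pseudo-basis of every finitely generated $K$-group over a binomial PID. Evaluating $M_k$ through $\bar{u}$ is the substitution $R_i\mapsto r_i$, $D_i\mapsto d_i$, $B_n\mapsto\binom{\cdot}{n}$, $c_n\mapsto n$, $b_\gamma\mapsto\gamma$ with $\gamma$ ranging over the structural constants of $\bar{u}$; evaluating through $\bar{w}$ is the same substitution except that $b_\gamma$ is sent to the structural constants of $\bar{w}$. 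Since the sections, the binomial operation, and the natural-number constants are literally the same, and the structural constants of $\bar{u}$ and $\bar{w}$ agree by hypothesis, the interpreted functions coincide: $M_k^u=M_k^w$ and $D_k^u=D_k^w$ on tuples over $K$. Hence for $g=\bar{u}^{\alpha}$ and $h=\bar{u}^{\beta}$ in normal form, $gh$ has normal-form exponents $M_k^u(\alpha,\beta)$ whereas $\phi(g)\phi(h)=\bar{w}^{\alpha}\bar{w}^{\beta}$ has normal-form exponents $M_k^w(\alpha,\beta)=M_k^u(\alpha,\beta)$, so $\phi(g)\phi(h)=\phi(gh)$; the identical argument with the $D_k$ gives $\phi(g^\lambda)=\phi(g)^\lambda$. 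Therefore $\phi$ is a $K$-isomorphism.

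I do not expect any genuine obstacle here beyond Theorem~\ref{UC} itself; the one point demanding attention is the section bookkeeping. A section is attached to an annihilator ideal, not to a group, which is precisely what lets one upgrade ``same period'' to ``common choice of sections'', and only after that normalization does ``same set of structural constants'' force the interpreted term-functions to be equal. The other thing to keep an eye on is the claim that the normal form of a $K$-group is unique and indexed by $\prod_i d_i(K)$ regardless of the group, but that is exactly the content of the proposition stated just before Theorem~\ref{UC}.
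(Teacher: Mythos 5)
Your proposal is correct and follows essentially the same route as the paper: the paper also deduces the theorem directly from Theorem~\ref{UC}, observing that pseudo-bases of the same period with the same structural constants yield identical coordinate functions, so the map $u_1^{\alpha_1}\cdots u_m^{\alpha_m}\mapsto v_1^{\alpha_1}\cdots v_m^{\alpha_m}$ is a $K$-isomorphism. Your write-up merely makes explicit the section bookkeeping and the easy forward direction, which the paper leaves implicit.
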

\begin{proof}According to the theorem above $G$ and $H$ have
pseudo-bases $\bar{u}$ and $\bar{v}$ of the same period with the same
multiplication coordinate functions. Consequently the mapping:
$$u_1^{\alpha_1}\ldots u_m^{\alpha_m}\mapsto v_1^{\alpha_1}\ldots
v_m^{\alpha_m}$$
is a $A$-isomorphism of $G$ onto $H$.

\end{proof}

  \section{ Foundations and additions and regular groups}\label{regular:sec}

The aim of this section is to state and prove some basic facts about regular groups. 

\subsection{Foundations and additions}

Let us recall some definitions. Let $A$ be a binomial principal ideal domain and let $G$ be an arbitrary finitely generated nilpotent $A$-group.
For an $A$-subgroup $N\leq G$ we shall denote by $Is(N)$ the isolator of $N$ in $G$, i.e. an $A$-subgroup
of the type:
$$Is_G(N)=\{x\in G:\exists \alpha \in A\setminus\{0\}~ (x^{\alpha}\in N)\}.$$
When the group $G$ is obvious from the context we drop $G$ from $IS_G(N)$. Define
$$I(G)=Is(G')\cap Z(G).$$
The special gap
\begin{equation}\label{eq4}Z(G)\geq G' \cap Z(G)\end{equation}
will be called \textit{tame}, if $Z(G)=I(G)$.

Let us refine the special gap~\eqref{eq4} with the help of $I(G)$:
$$Z(G)\geq I(G)\geq G'\cap Z(G).$$
The quotient $I(G)/(G'\cap Z(G)$ is a finite direct sum of periodic
cyclic $A$-modules, and the quotient $Z(G)/I(G)$ is a free $A$-module of finite rank. Since $A$ is a principal ideal domain, there exists a free
$A$-module of finite rank $G_0\leq Z(G)$, such that $Z(G)\cong
G_0\times I(G)$.

\begin{defn} Any subgroup $G_0\leq G$ such that
$Z(G)=G_0\times I(G)$ is called an addition of $G$ and the quotient group $G_f=G/G_0$
is called a foundation of $G$, associated with the addition
$G_0$.\end{defn}
 The facts stated above allow us to formulate the following  proposition
\begin{prop} \label{prop1}Any finitely generated $A$-group $G$ over the principal
ideal domain $A$ possesses an addition and a foundation. Besides,
\begin{enumerate}
\item the addition $G_0$ is a free $A$-module of finite rank,
\item the foundation $G_f$ has a tame special gap.
\end{enumerate}\end{prop}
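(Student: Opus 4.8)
\emph{Proof proposal.} Existence of an addition and a foundation, together with part (1), is almost immediate from the discussion preceding the statement. With $G$ nilpotent and $K$ a PID (hence Noetherian), $Z(G)$ is a finitely generated $K$-module and $I(G)=Is(G')\cap Z(G)$ a $K$-submodule; the quotient $Z(G)/I(G)$ is torsion-free, since $z^{\alpha}\in I(G)$ with $0\neq\alpha\in K$ forces $z^{\alpha\beta}\in G'$ for some $0\neq\beta\in K$, hence $z\in Is(G')\cap Z(G)=I(G)$. A finitely generated torsion-free module over a PID is free of finite rank, so the sequence $0\to I(G)\to Z(G)\to Z(G)/I(G)\to 0$ splits, and any $K$-complement $G_0$ of $I(G)$ in $Z(G)$ is an addition, with $G_0\cong Z(G)/I(G)$ free of finite rank. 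I would also record here the one structural fact the rest of the argument rests on: $G_0\cap G'=1$, because an element of $G_0\cap G'$ lies in $G'\cap Z(G)\subseteq Is(G')\cap Z(G)=I(G)$, hence in $G_0\cap I(G)=1$.

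The remaining content is part (2): the foundation $G_f=G/G_0$ has a tame special gap, i.e. $Z(G_f)=I(G_f)$, equivalently (since $I(G_f)\subseteq Z(G_f)$ always) $Z(G_f)\subseteq Is_{G_f}(G_f')$. The plan is to rewrite both sides in terms of $G$ via two ``commutation with the quotient'' identities. First, $Z(G_f)=Z(G)/G_0$: if $gG_0$ is central in $G/G_0$ then $[g,G]\subseteq G_0$, but also $[g,G]\subseteq G'$, so $[g,G]\subseteq G_0\cap G'=1$ and $g\in Z(G)$; the reverse inclusion holds because $G_0\leq Z(G)$. Second, writing $G_f'=G'G_0/G_0$, the condition $(xG_0)^{\alpha}\in G_f'$ with $0\neq\alpha\in K$ is exactly $x^{\alpha}\in G'G_0$, so $Is_{G_f}(G_f')=Is_G(G'G_0)/G_0$ (both isolators are genuine subgroups, as $G$ and $G_f$ are nilpotent and $G'G_0$, $G_f'$ normal). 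Hence part (2) reduces to the inclusion $Z(G)\subseteq Is_G(G'G_0)$, which I would verify directly: for $z\in Z(G)$ write $z=g_0 i$ with $g_0\in G_0$, $i\in I(G)$ (direct sum decomposition of $Z(G)$); since $i\in Is(G')$ there is $0\neq\alpha\in K$ with $i^{\alpha}\in G'$, and as $g_0$ and $i$ commute we get $z^{\alpha}=g_0^{\alpha}i^{\alpha}\in G_0G'=G'G_0$, so $z\in Is_G(G'G_0)$.

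The step I expect to be the real crux — where a careless argument would break — is precisely the pair of identities $Z(G/G_0)=Z(G)/G_0$ and $Is_{G/G_0}((G/G_0)')=Is_G(G'G_0)/G_0$: dividing by the free central summand $G_0$ must neither enlarge the centre nor distort the isolator of the commutator subgroup. Both are driven by $G_0\cap G'=1$, which in turn holds only because $G_0$ was taken to be a complement of $I(G)=Is(G')\cap Z(G)$, rather than an arbitrary free direct summand of $Z(G)$. Everything else is routine bookkeeping with commutator identities and isolators.
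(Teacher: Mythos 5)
Your proof is correct and follows the same line as the paper, which states the proposition as an immediate consequence of the preceding discussion (the splitting $Z(G)=G_0\oplus I(G)$ over the PID $K$, with $Z(G)/I(G)$ free of finite rank) and omits the verification of part (2). The details you supply --- in particular $G_0\cap G'=1$, whence $Z(G/G_0)=Z(G)/G_0$ and $Is_{G_f}(G_f')=Is_G(G'G_0)/G_0$, and then $z^{\alpha}=g_0^{\alpha}i^{\alpha}\in G'G_0$ for $z=g_0i\in Z(G)$ --- are exactly the routine checks the paper leaves to the reader.
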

\begin{proof} Part (1) is the direct corollary of the definition of $G_0$ and the structure theorem for finitely generated modules over PID's. For part (2) we firstly claim that 
\begin{equation}\label{zfound:eqn}Z(G/G_0)=Z(G)/G_0.\end{equation} The inclusion $\geq$ is obvious. If $xG_0 \in Z(G/G_0)$ then for all $y\in G$, $[x,y]\in G_0$. But $[x,y]\in G'$ too, so by definition for all $y\in G$, $[x,y]\in G'\cap G_0=1$. So $x\in Z(G)$ as claimed. On the other hand an easy calculation shows that 
\begin{equation}\label{Isfound:eqn}Is((G/G_0)')=Is(G'\cdot G_0)/G_0=Is(G'\cdot Z(G))/G_0\end{equation}
Now \eqref{zfound:eqn} and \eqref{Isfound:eqn} imply that $Z(G/G_0)\leq Is((G/G_0)')$ which is the desired result.\end{proof} 
\subsection{Regular groups}
\begin{defn}A nilpotent $A$-group is called regular, if $G=H\times
G_0$, where $G_0$ is an addition of $G$ and $Is(H')\geq Z(H)$.\end{defn}
If $A$ is a field then by \cite{MR3}, Theorem 7 any $A$-group is
regular. This fact will also follow from Proposition~\ref{rproperties:prop} below. In the case of $A=\mathbb{Z}$ it is not true as the following example shows.
\begin{exmp} let $G$ be a finitely generated group defined in the variety of
2-nilpotent groups by the presentation
$$G=\langle a,b,c:[c^3,a]=1, [c^3,b]=1\rangle.$$
Then  $G_0=\langle c^3\rangle \cong \mathbb{Z}^+$, $G_f=\langle
a,b,c:c^3=1\rangle$ and it is clear that $G\ncong G_f\times G_0$, therefore (see the proposition below)
$G$ is not regular.\end{exmp}
\begin{prop}\label{rproperties:prop} Let $G$ be a finitely generated group over the principal ideal domain $A$. Then the following conditions are
equivalent.
\begin{enumerate}
\item $G$ is a regular group.
\item $G\cong G_f\times G_0$ for a certain addition $G_0$ and the associated foundation $G_f$.
\item $G\cong G_f\times G_0$ for any addition $G_0$ and the associated foundation
$G_f$.
\item $Is(Z(G)\cdot G')=Is(G')\cdot Z(G)$. \end{enumerate}\end{prop}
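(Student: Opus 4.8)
The plan is to prove the cycle of implications $(3)\Rightarrow(2)$ (trivial, since an addition exists by Proposition~\ref{prop1}), $(2)\Rightarrow(1)$, $(1)\Rightarrow(4)$, and $(4)\Rightarrow(3)$. The implication $(3)\Rightarrow(2)$ needs only that $G$ possesses at least one addition, which is Proposition~\ref{prop1}. For $(2)\Rightarrow(1)$ I would show that if $G\cong G_f\oplus G_0$ for some addition $G_0$, then writing $H=G_f$ (identified with its image in the decomposition) one has $Z(G)=Z(H)\oplus G_0$; comparing this with $Z(G)=I(G)\oplus G_0$ and using that $G_0\leq Z(G)$ is a complement, we get $Z(H)\cong I(G)/(\text{the part in }G_0)$, and since the special gap of a foundation is tame (Proposition~\ref{prop1}(2)), $Z(H)\subseteq Is(H')$, which is exactly the defining condition of regularity. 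The key algebraic point here is that $H'=G'$ under the identification (the commutator subgroup is unaffected by splitting off a central free summand) and $Is(H')\geq Z(H)$ follows from tameness of the foundation's special gap.

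For $(1)\Rightarrow(4)$: assume $G=H\oplus G_0$ with $G_0$ an addition and $Is(H')\geq Z(H)$. Then $G'=H'$ and $Z(G)=Z(H)\oplus G_0$, so $G'\cdot Z(G)=H'\oplus G_0$ (using $G_0$ central). Taking isolators and using that isolators respect direct sums of a subgroup with a central free $K$-module — here one uses that $G_0$ is a free $K$-module, hence equals its own isolator inside the free summand — we get $Is(G'\cdot Z(G))=Is(H'\oplus G_0)=Is(H')\oplus G_0$. On the other hand $Is(G')\cdot Z(G)=Is(H')\cdot(Z(H)\oplus G_0)=(Is(H')\cdot Z(H))\oplus G_0=Is(H')\oplus G_0$, where the last equality is the hypothesis $Z(H)\subseteq Is(H')$. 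Hence the two sides of (4) coincide.

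The substantive direction is $(4)\Rightarrow(3)$, and this is where I expect the main obstacle. Fix an arbitrary addition $G_0$ and the foundation $G_f=G/G_0$; I must produce an isomorphism $G\cong G_f\oplus G_0$. The idea is that condition (4) guarantees that $G$ splits over the central subgroup $G_0$: one wants a retraction $G\to G_0$, equivalently a complement $G_1\leq G$ with $G=G_1\times G_0$ and $G_1\cong G_f$. Since $G_0$ is a free $K$-module and $G_0\leq Z(G)$, splitting the extension $1\to G_0\to G\to G_f\to 1$ amounts to the vanishing of a class in $H^2(G_f,G_0)$; the content of (4) is precisely that the obstruction to choosing coset representatives of $G_0$ closed under commutators and $K$-powers vanishes — because $Is(G'\cdot Z(G))=Is(G')\cdot Z(G)$ controls exactly where the "extra" torsion-modulo-$G'$ elements of the center that are forced to interact with $G_0$ live. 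Concretely I would pick a pseudo-basis of $G$ adapted to the series $G\geq Is(G'\cdot Z(G))\geq Is(G')\cdot Z(G)\geq \cdots$ (available by the coordinatization results of Section~\ref{coordinatization}), observe that the generators lying in $G_0$ can be peeled off, and use (4) to adjust the remaining generators by central elements so that their periods and structural constants no longer involve $G_0$; the resulting subgroup $G_1$ is then a complement isomorphic to $G_f$. The delicate step is verifying that the adjustment is possible simultaneously for all defining relations, i.e. that no "cross terms" into $G_0$ survive — and this is exactly what the equality in (4), rather than mere inclusion, buys us.
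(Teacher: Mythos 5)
Your implications $(3)\Rightarrow(2)$, $(2)\Rightarrow(1)$ and $(1)\Rightarrow(4)$ are essentially fine (in $(1)\Rightarrow(4)$ the intermediate identity $G'\cdot Z(G)=H'\oplus G_0$ should read $G'\cdot Z(G)=(H'\cdot Z(H))\oplus G_0$; the conclusion survives because $Z(H)\leq Is(H')$ gets absorbed once you take isolators). The problem is the direction you yourself flag as substantive, $(4)\Rightarrow(3)$: what you offer there is a plan, not a proof. You assert that condition (4) is ``precisely'' the vanishing of the obstruction in $H^2(G_f,G_0)$ to splitting $1\to G_0\to G\to G_f\to 1$, and that a pseudo-basis adapted to the series $G\geq Is(G'\cdot Z(G))\geq Is(G')\cdot Z(G)\geq\cdots$ can be adjusted so that ``no cross terms into $G_0$ survive'' --- but you never compute the obstruction, never exhibit the adjustment, and explicitly defer the simultaneous compatibility of all relations, which is exactly the content of the implication. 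As it stands, nothing connects the set-theoretic equality $Is(G'\cdot Z(G))=Is(G')\cdot Z(G)$ to the vanishing of a cohomology class; the claim that (4) ``buys'' this is the entire difficulty, restated rather than proved.

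The paper's argument shows the implication needs none of this machinery: everything happens in the abelianization. Since $G/Is(G'\cdot Z(G))$ is a finitely generated $K$-torsion-free $K$-module, it is free, so $\pi\bigl(Is(G'\cdot Z(G))\bigr)$ is a direct summand of $Ab(G)=G/G'$, say $Ab(G)=\pi\bigl(Is(G'\cdot Z(G))\bigr)\oplus G_1$, where $\pi:G\to G/G'$. Condition (4) now gives $\pi\bigl(Is(G'\cdot Z(G))\bigr)=\pi\bigl(Is(G')\cdot Z(G)\bigr)=\pi(Is(G'))\oplus G_0$ (identifying $G_0$ with its image, legitimate since $G_0\cap G'=1$), hence $Ab(G)=G_1\oplus\pi(Is(G'))\oplus G_0$. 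Taking $H$ to be the full preimage in $G$ of $G_1\oplus\pi(Is(G'))$, one gets $G'\leq H$, $G=H\cdot G_0$, and $H\cap G_0\leq G'\cap G_0=1$; since $G_0$ is central this yields $G\cong H\oplus G_0$ with $H\cong G/G_0=G_f$, for the \emph{arbitrary} addition $G_0$ chosen at the start. If you want to keep your cohomological phrasing, this is exactly the missing verification: the splitting is produced on $Ab(G)$ and pulled back, which is why the equality (4) --- and not merely an inclusion --- suffices, and why no bookkeeping with structure constants is required. Until you supply either this reduction or an actual computation of the obstruction, $(4)\Rightarrow(3)$ remains unproved in your write-up.
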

\begin{proof}The implication (2)$\Rightarrow$(1) is obvious for if
$G=H\times G_0$ is the decomposition from the definition of a regular
group, then $G_0$ is an addition of $G$ and $H\cong G_f=G/G_0$. The
inverse implication (1)$\Rightarrow$(2) follows from~\ref{prop1}.
Let us prove (2)$\Rightarrow $(4). So we can assume $G=G_f\times G_0$. Since
$Is(G'\cdot Z(G))=Is(G'\cdot G_0)$ and $G'\leq G_f$, then
$$Is(G'\cdot G_0)=Is(G'\times G_0)=Is(G')\times G_0=Is(G')\cdot Z(G).$$
To prove (4)$\Rightarrow $(3) assume $Is(G')\cdot Z(G))=Is(G')\cdot Z(G)$. The $A$-group
 $G/Is(G'\cdot Z(G))$ is a finitely generated abelian $A$-group without $A$-torsion, i.e. a
free $A$-module. Consequently,
$$Ab(G)=G/G'=\pi(Is(G'\cdot Z(G)))\times G_1,$$
where $\pi:G\rightarrow
G/G'$ is the canonical epimorphism and $G_1$ is a certain direct $A$-complement of
$\pi(Is(G'\cdot Z(G)))$ in $Ab(G)$ . In turn, for
an arbitrary addition $G_0$ we have
$$\pi(Is(G'\cdot Z(G)))=\pi(Is(G')\cdot Z(G))=\pi(Is(G'))\times G_0.$$
Hence
$$Ab(G)=G_1\times \pi(Is(G')) \times G_0.$$
Let $H$ be the complete pre-image of the subgroup $G_1\times
\pi(Is(G'))$ in $G$.
Then $G=H\cdot G_0$, $H\cap G_0\leq G' \cap G_0 =1$ and $G'\leq
H$. Consequently $G\cong H\times G_0$ and $H\cong G_f$.
The implication (3)$\Rightarrow $(2) is obvious. The proposition is proved.

\end{proof}
\begin{cor} Any finitely generated 2-nilpotent $A$-torsion-free $A$-group is
regular.\end{cor}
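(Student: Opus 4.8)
The plan is to verify condition (4) of Proposition~\ref{rproperties:prop}, namely $Is(Z(G)\cdot G')=Is(G')\cdot Z(G)$, which together with that proposition yields regularity. The whole point is that a $K$-torsion-free nilpotent $K$-group of class $2$ has an isolated center, and once that is established both sides of the desired equality collapse to $Z(G)$.

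First I would record the elementary reduction. Since $G$ is $2$-nilpotent we have $[G',G]=1$, i.e. $G'\leq Z(G)$, so $Z(G)\cdot G'=Z(G)$ and hence $Is(Z(G)\cdot G')=Is(Z(G))$. Next I would prove $Is(Z(G))=Z(G)$. Let $x\in G$ with $x^{\alpha}\in Z(G)$ for some $0\neq\alpha\in K$. For any $g\in G$, nilpotency class $\leq 2$ gives the identity $[x^{\alpha},g]=[x,g]^{\alpha}$ (commutation with $g$ is $K$-linear on the relevant quotients because $[G',G]=1$; equivalently this is the Hall--Petresco collection formula with all higher correction terms $\tau_k$, $k\geq 3$, trivial). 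Hence $[x,g]^{\alpha}=1$ with $[x,g]\in G$ and $\alpha\neq 0$, so $K$-torsion-freeness forces $[x,g]=1$. As $g$ was arbitrary, $x\in Z(G)$, which shows $Z(G)$ is isolated.

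Finally, combining the two steps, $Is(Z(G)\cdot G')=Is(Z(G))=Z(G)$. On the other side, $Is(G')\leq Is(Z(G))=Z(G)$ (using $G'\leq Z(G)$ once more), so $Is(G')\cdot Z(G)=Z(G)$ as well. Therefore $Is(Z(G)\cdot G')=Is(G')\cdot Z(G)$, and Proposition~\ref{rproperties:prop} gives that $G$ is regular.

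There is essentially no genuine obstacle; the one place calling for a little care is the commutator identity $[x^{\alpha},g]=[x,g]^{\alpha}$ for $K$-exponents $\alpha$, which is exactly where both the binomiality of $K$ (to make sense of $x^{\alpha}$) and nilpotency class $\leq 2$ (to kill the correction terms) enter. I note that finite generation plays no role in the argument beyond placing $G$ within the hypotheses of Proposition~\ref{rproperties:prop}.
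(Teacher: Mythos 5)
Your proof is correct and follows essentially the same route as the paper: both reduce the claim to condition (4) of Proposition~\ref{rproperties:prop} via the observation that $K$-torsion-freeness makes $Z(G)$ isolated, so that $Is(G'\cdot Z(G))=Is(Z(G))=Z(G)=Is(G')\cdot Z(G)$. The only difference is that the paper simply cites the isolation of the center as a known fact, while you supply the short class-2 verification via $[x^{\alpha},g]=[x,g]^{\alpha}$, which is a fine (and correct) addition.
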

\begin{proof}
Actually if the group $G$ is $A$-torsion-free then $Z(G)$ is an
isolated subgroup of $G$. Moreover $G'\leq Z(G)$. Therefore
$$Is(G'\cdot Z(G))=Is(Z(G))=Z(G)=Is(G')\cdot Z(G),$$
and according to the proposition, $G\cong G_f\times G_0$.

\end{proof}
In the sequel, if $G=H\times G_0$ is a regular group, then we shall call the subgroup $H$ a foundation
of $G$ as well, identifying $H\cong G/G_0$.

\begin{prop}Let $G$ be a regular group. Then:
\begin{enumerate}
\item $G\cong G_f \times G_0$ for any foundation $G_f$ and any addition $G_0$ which maybe not associated with each other;
\item all foundations of $G$ are isomorphic to each other, and so are additions. \end{enumerate}\end{prop}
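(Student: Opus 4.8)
The plan is to reduce everything to one fixed regular decomposition of $G$ and then to move between the various additions by an explicit automorphism of $G$, thereby avoiding any appeal to cancellation. The assertion about additions is the easy half: if $G_0$ is any addition then $Z(G)=G_0\oplus I(G)$, so projection along $I(G)$ is a $K$-isomorphism $G_0\cong Z(G)/I(G)$; the target does not depend on $G_0$, so all additions are isomorphic, and moreover $G_0\cong Z(G)/I(G)$ for every addition $G_0$ — a fact I will reuse for part~(1).

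Next I would fix, once and for all, a decomposition $G=L\oplus F$ with $F$ an addition and $Is(L')\supseteq Z(L)$ (such a decomposition exists by the definition of a regular group). Then $F$ is free of finite rank, $L\cong G/F$ is the foundation associated with $F$, and $L$ has a tame special gap. Inside $G=L\oplus F$ I would identify the relevant subgroups: since $F$ is abelian, $G'=L'$ and $Z(G)=Z(L)\oplus F$; and since $F$ is $K$-torsion-free while $Z(L)\subseteq Is(L')$ by tameness, an element $(z,x)\in Z(L)\oplus F$ lies in $I(G)=Is(G')\cap Z(G)$ if and only if $x=1$, i.e.\ $I(G)=Z(L)\oplus 1$. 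Consequently the additions of $G$ are exactly the complements of $Z(L)$ in $Z(G)=Z(L)\oplus F$; projecting onto the $F$-coordinate, each such complement is the graph $\Gamma_s=\{(s(x),x):x\in F\}$ of a uniquely determined $K$-homomorphism $s\colon F\to Z(L)$, and conversely every such graph is an addition.

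The heart of the argument is the following. Given two additions $\Gamma_s,\Gamma_{s'}$, define $\tau\colon G=L\oplus F\to L\oplus F$ by $\tau(l,x)=\bigl(l\cdot(s'-s)(x),\,x\bigr)$. A one-line computation using that $(s'-s)(x)$ is central in $L$ and that $s'-s$ is a homomorphism shows $\tau$ is an endomorphism; it is bijective with inverse $(l,x)\mapsto(l\cdot(s-s')(x),x)$, hence $\tau\in\mathrm{Aut}(G)$, and clearly $\tau(\Gamma_s)=\Gamma_{s'}$. Therefore $\tau$ induces an isomorphism $G/\Gamma_s\cong G/\Gamma_{s'}$, so any two foundations of $G$ are isomorphic; taking $s'=0$ shows in particular that every foundation is isomorphic to $G/F\cong L$. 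This proves~(2). For~(1): given any foundation $G_f$ and any addition $G_0$ (not necessarily associated), we have $G_f\cong L$ and $G_0\cong Z(G)/I(G)\cong F$ by the previous steps, whence $G_f\oplus G_0\cong L\oplus F=G$.

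The main obstacle is conceptual rather than computational. The tempting route — combine $G\cong G_f\oplus G_0$ for associated pairs (Proposition~\ref{rproperties:prop}) with $G_0\cong G_0'$ and then cancel the free central direct summand — fails, because cancellation of a free abelian direct factor does not hold in general for finitely generated nilpotent groups (this is precisely the phenomenon analysed later in the paper). The argument above replaces cancellation by an explicit automorphism permuting the additions; the only remaining work is the routine verification that $\tau$ is a homomorphism and the bookkeeping identifying additions with graphs of homomorphisms $F\to Z(L)$, together with the (already recorded) facts that $Z(G)/I(G)$ is free of finite rank and that a foundation has a tame special gap.
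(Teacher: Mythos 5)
Your proof is correct, but it follows a genuinely different route from the paper's. The paper's argument is shorter and purely internal: it fixes one foundation realized as a subgroup $H\leq G$ (so $G=H\oplus G_0'$ for some addition $G_0'$, using the convention stated after Proposition~\ref{rproperties:prop}), notes that $H\geq Is(G')$ and $H\cap Z(G)=I(G)$, hence $Z(G)=(H\cap Z(G))\oplus G_0$ for an \emph{arbitrary} addition $G_0$, and concludes $G=H\cdot Z(G)=H\cdot G_0$ with $H\cap G_0=1$; thus the single subgroup $H$ is a common direct complement to every addition, which gives (1) at once, and (2) is read off because every foundation $G/G_0$ is then isomorphic to $H$ and every addition to $Z(G)/I(G)$. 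You instead identify $I(G)=Z(L)$ inside the fixed decomposition $G=L\oplus F$, parametrize the additions as graphs $\Gamma_s$ of $K$-homomorphisms $s\colon F\to Z(L)$, and move one addition to another by the explicit shear automorphism $\tau$, deducing (2) first and assembling (1) from the isomorphism types $G_f\cong L$, $G_0\cong F$. Both arguments are complete; yours costs the graph bookkeeping (and, to stay in the category the paper works in, one should note that $\tau$ is a $K$-automorphism, which is immediate since $(lz)^{\alpha}=l^{\alpha}z^{\alpha}$ for central $z$), but it buys more than the statement asks: the additions form a single orbit under $\mathrm{Aut}(G)$, not merely a single isomorphism class, while the paper's route buys brevity by never constructing an automorphism at all. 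Your remark that a naive cancellation of the free central factor would be illegitimate is well taken, and the paper's proof likewise avoids any cancellation.
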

\begin{proof}To prove (1) let $G_0$ be an arbitrary addition of $G$, and let $H$ be a certain foundation of $G$ which maybe not
associated with $G_0$. It is clear from the definition of an addition
and a foundation that $H \geq Is(G')$ and $Z(G)=(H\cap Z(G)) \times
G_0$. Hence $G=H\cdot Z(G)=H\cdot G_0$ and $H\cap G_0=1$, i.e. $G\cong H\times
G_0$. Statement (2) follows from (1).

\end{proof}

\section{Elementary equivalence of finitely generated nilpotent groups}\label{main:sec}
In this section we give a structural and algebraic characterization for elementary equivalence of finitely generated nilpotent groups.

Let us first put together a proof of Proposition~\ref{max-refined:prop}. We start be proving two lemmas.
\begin{lem} \label{F_R-inf:lem} Assume $G$ is an infinite finitely generated nilpotent group such that $Z(G)\leq Is(G')$ and let $(R)$ be an arbitrary central series for $G$. Then all the abelian groups $R^u$, $R^l$ and $G/V_R$ are infinite. \end{lem}
\begin{proof} Assume $R^l$ is finite. Then by definition of $R^l$, $G'$ is finite. This implies that $Is(G')$ is finite. Since $Z(G)\leq Is(G')$, $Z(G)$ is finite. So by a standard argument, say as in Corollary 2.3 of \cite{war}, $G$ has to have finite exponent. Now $G$ being finitely  generated and nilpotent we can conclude that $G$ is finite, contradiction. So $R^l$ has to be infinite. Recall that $F_R:G/V_R\times R^u \to R^l $ is a full bilinear map. So the induced homomorphism $\bar{F}_R: (G/V_R)\otimes R^u \to R^l$, where $\otimes$ is the tensor product of abelian groups, is onto. But if one of $G/V_R$ or $R^u$ was finite then the $(G/V_R)\otimes R^u$ would be finite, which is impossible.\end{proof}
\begin{cor} \label{P(F_R)-inf:cor} Under the assumptions of Lemma~\ref{F_R-inf:lem} all the rings $P(F_R)$, $P_R$ and $A_R$ include a copy of the ring of integers $\Z$.\end{cor}
\begin{proof} By lemma~\ref{F_R-inf:lem} all the finitely generated abelian groups $G/V_R$ and $R^u$ and $R^l$ are infinite. So they are all exact $\Z$-modules, while obviously the map $F_R$ is $\Z$-bilinear. So by definition $\Z\cdot 1 \leq P(F_R)$. Now that $P(F_R)$ includes a copy of $\Z$ the subrings $P_R$ and $A_R$ of $P(F_R)$ also include a copy of $\Z$ by definition.\end{proof}      

\noindent \emph{Proof of Proposition~\ref{max-refined:prop}.} Assume $G_0$ is any addition of $G$ and $G_f$ the corresponding foundation. We observe that the foundation $G_f\cong G/G_0$ is infinite otherwise the group would be abelian-by-finite. Let $\pi:G\to G_f$ denote the canonical epimorphism. Note that $R_{i}^l/R_{i+1}^l\cong \pi(R_i^l)/\pi(R^l_{i+1})$ for all major direct factors of $R^l$. The same for major direct factors of $R^u$. Since $G_0\leq V_R$ we also have $G/V_R\cong \pi(G)/V_{\pi(R)}$. Since $G_f$ satisfies hypothesis of Lemma~\ref{F_R-inf:lem} we conclude that $R^l$, $R^u$ and $G/V_R$ are infinite. This clearly implies the first part of the statement. Moreover by Corollary~\ref{P(F_R)-inf:cor} (or rather its proof) the ring $A=A_R(G)$ contains a copy of $\Z$. By Proposition~\ref{prop64} one can interpret in $G$ the ring $A=A_R(G)$ and its action on each quotient of the consecutive terms of $(U(R))$ and $(L(R))$ except the quotients associated to the special gaps. More specifically if $U_i>U_{i+1}$ are consecutive terms of $(U(R))$ avoiding the special gap, then the two-sorted module $\langle \bar{U}_i, A \rangle$, where $\bar{U}_i=U_i/U_{i+1}$ is interpretable in $G$. The same for the consecutive terms of $(L(R))$ which avoid the corresponding special gap. The interpretations are uniform with respect to $Th(G)$. 

By observations above and Proposition~\ref{basedef} for each $i$ as above there is a sequence of definable $A$-submodules 
$$\bar{U}_{i}=\bar{U}_{i1}>\bar{U}_{i2}> \ldots >\bar{U}_{in_{i}}>\bar{U}_{i,n_{i}+1}=0,$$
of $\bar{U}_i$ where either $\bar{U}_{ij}/\bar{U}_{i,{j+1}}$, $1\leq j \leq n_i$, is finite or it is infinite and the $\Z$-module structure of the quotient is interpretable in its $A$-module structure, that is, $\langle \bar{U}_{ij}/\bar{U}_{i,{j+1}}, \Z, s_{ij}[\Z]\rangle$, where $s_{ij}[\Z]$ describes the action of $\Z$, is interpretable in $\langle \bar{U}_{ij}/\bar{U}_{i,{j+1}}, A, s_{ij}\rangle$. This time interpretations will be uniform with respect to finitely generated models of $Th(G)$. This concludes the proof.

\qed

We still need a version of Proposition~\ref{max-refined:prop} in case the (refined) special gaps $Is(G')\cdot Z(G) > Is(G')$ and  $Is(G')>Z(G)\cap Is(G')$ exist. To that end we proceed by proving a few lemmas.
\begin{lem}Assume $G$ is a finitely generated nilpotent group then there is a formula of $L$ that defines $Is(G')$ uniformly with respect to $Th(G)$.\end{lem}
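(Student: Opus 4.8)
The statement asserts that for a finitely generated nilpotent group $G$ the verbal subgroup $Is(G')$ is definable in $G$ by a formula that works uniformly across all $H \equiv G$. The plan is to reduce this to Proposition~\ref{prop61}, which says that any subgroup of the form $f_\varphi(G)$ generated by a definable set is uniformly definable provided it has finite width. So the two tasks are: (i) exhibit a first-order formula $\varphi(x)$ of the language of groups whose solution set generates $Is(G')$, and (ii) show that $Is(G')$ has finite width with respect to that generating set.

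For task (i), I would use the fact that $G'$ itself is uniformly definable in any finitely generated nilpotent group (this is noted in Section~\ref{logical:sec}: $G'$ has finite width by Corollary~\ref{corlem1}, hence is definable by the formula $\Phi_s$ of Proposition~\ref{prop61}, uniformly with respect to $Th(G)$). Fix such a formula $\psi_{G'}(y)$ defining $G'$. Since $G$ is nilpotent and finitely generated, every element of $Is(G')$ has some power lying in $G'$, and moreover — because $G/G'$ is a finitely generated abelian group — there is a single bound $N$ (the exponent of the torsion subgroup of $G/G'$) such that $x \in Is(G')$ iff $x^N \in G'$. This $N$ is an invariant of $Th(G)$: the statement "$G/G'$ has torsion subgroup of exponent dividing $N$ and containing an element of order exactly $N$" is first-order expressible using $\psi_{G'}$, so every $H \equiv G$ has the same $N$. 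Hence I take
$$\varphi(x) \ :=\ \psi_{G'}(x^N),$$
and $\varphi(G) = Is(G')$ already as a set (not merely as a generating set), in $G$ and uniformly in every $H \equiv G$.

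For task (ii), finite width is now almost immediate: since $\varphi(G) = Is(G')$ is already the whole subgroup, its width with respect to itself is $1$. Then Proposition~\ref{prop61} applies verbatim: $Is(G')$ is defined in $G$, and in every $H \equiv G$, by the formula $\Phi_1(x)$ built from $\varphi$ — which here is just $\varphi(x) \vee \varphi(x^{-1})$, and since $Is(G')$ is a subgroup this is equivalent to $\varphi(x)$ itself. Thus $\varphi(x) = \psi_{G'}(x^N)$ is the desired uniform definition.

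\textbf{Main obstacle.} The only genuinely non-trivial point is the uniform existence of the exponent $N$ with "$x \in Is(G') \iff x^N \in G'$" — i.e. that the relevant torsion in $G/G'$ is bounded and that the bound is preserved under elementary equivalence. Boundedness is clear because $G/G'$ is a finitely generated abelian group, so its torsion subgroup is finite; the preservation under $\equiv$ requires writing a sentence (using the already-available formula $\psi_{G'}$ for $G'$, so that the quotient $G/G'$ is interpretable) that pins down both that $N$ annihilates all torsion of $G/G'$ and that some element has order exactly $N$ modulo $G'$. Both are first-order over the interpreted abelian group $G/G'$, so this goes through; everything else is a direct appeal to Proposition~\ref{prop61} and the uniform definability of $G'$.
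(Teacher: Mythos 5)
Your proposal is essentially the paper's own argument: you use the uniform definability of $G'$ (via finite width, Corollary~\ref{corlem1} and Proposition~\ref{prop61}, as in Proposition~\ref{prop62}) and then define $Is(G')$ by $\psi_{G'}(x^N)$ for a fixed integer $N$; the paper does exactly this with $m=|Is(G')/G'|$ in place of your $N$ (the exponent of the torsion of $G/G'$, which divides $m$ — the choice is immaterial). The one point where your justification is imprecise is the uniformity step: the statement ``$N$ annihilates all torsion of $G/G'$'' is \emph{not} expressible by a single first-order sentence, since being a torsion element is not uniformly definable in the interpreted abelian group $G/G'$; so ``every $H\equiv G$ has the same $N$'' does not follow the way you state it (and in fact you do not need the full claim, nor the clause about an element of order exactly $N$). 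The correct repair — and this is precisely how the paper argues — is to use the schema of sentences, one for each $n\geq 1$,
$$\forall x\,\bigl(\psi_{G'}(x^n)\rightarrow \psi_{G'}(x^N)\bigr),$$
each of which holds in $G$ and hence in any $H\equiv G$; then, given $x\in Is(H')$ with $x^n\in H'$, the sentence for that particular $n$ yields $x^N\in H'$, while the inclusion $\{x: x^N\in H'\}\subseteq Is(H')$ is trivial. With this substitution your proof is correct and coincides with the paper's; invoking Proposition~\ref{prop61} with ``width $1$'' is then superfluous, since $\psi_{G'}(x^N)$ already defines the subgroup outright.
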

\begin{proof}By Proposition~\ref{prop62} $G'$ is definable in $G$ uniformly with respect to $Th(G)$ by a formula $\phi$ of $L$. Since $Is(G')/G'$ is a a finite group, $g\in Is(G')$ if and only if $g^m\in G'$, where $m$ is the exponent of the quotient $Is(G')/G'$. So if $\phi(x)$ defines $G'$ in $G$, then $\phi(x^m)$ defines $Is(G')$ in $G$. Now assume $H\equiv G$ and set
$$H_1=\{x\in H: H\models \phi(x^m)\}.$$
Obviously $H_1\subseteq Is(H')$. Pick $x\in H$ so that $x^n\in H'$ for some $n\in \N\setminus \{0\}$. Note that $$G\models \forall x (\phi(x^n)\rightarrow \phi(x^m)).$$
So the same sentence is true in $H$ implying that $H_1=Is(H')$ which concludes the proof.

\end{proof}

\begin{cor}\label{bigseries}Assume $G\equiv H$ are finitely generated nilpotent groups. Then $G_0\cong H_0$ for any respective additions of $G$ and $H$.\end{cor}
\begin{proof} It follows from lemma above and uniform definability of $Z(G)$ that $I(G)$ is uniformly definable in $G$. So the quotient $Z(G)/I(G)\cong G_0$ is uniformly interpretable in $G$. Thus we conclude that $G_0\cong Z(G)/I(G)\equiv Z(H)/I(H)\cong H_0$. But $G_0$ and $H_0$ are free abelian groups of finite rank, hence $G_0\cong H_0$.

\end{proof}
\begin{cor} \label{gq}The subgroups $M(G)=Is(G'\cdot Z(G))$ and $N(G)=Is(G')\cdot Z(G)$ are uniformly definable in $G$. Therefore the finite abelian quotient
$$M(G)/N(G)$$ is interpretable in $G$ uniformly with respect to $Th(G)$.\end{cor}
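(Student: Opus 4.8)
\noindent\emph{Proof sketch.} The plan is to assemble both subgroups out of pieces that are already known to be uniformly definable, and to deal with the two isolator closures by the same sentence-scheme trick used in the lemma above that handled $Is(G')$.

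First consider $N(G)=Is(G')\cdot Z(G)$. By that lemma $Is(G')$ is uniformly definable, say by a formula $\psi(x)$, and by Proposition~\ref{prop62} together with the (obvious) uniform definability of the center, $Z(G)$ is uniformly definable, say by $\zeta(x)$. Since $Z(G)$ is central, the set product equals $\{uv:u\in Is(G'),\ v\in Z(G)\}$, so $N(G)$ is defined in $G$ by
$$\exists u\,\exists v\,\bigl(x=uv\wedge\psi(u)\wedge\zeta(v)\bigr).$$
For any $H\equiv G$ the formulas $\psi,\zeta$ define $Is(H')$ and $Z(H)$, so the displayed formula defines $Is(H')\cdot Z(H)=N(H)$ in $H$; hence $N(G)$ is uniformly definable.

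Next consider $M(G)=Is(G'\cdot Z(G))$. By Proposition~\ref{prop62} $G'$ is uniformly definable, so, exactly as above, $G'\cdot Z(G)$ is uniformly definable, say by $\theta(x)$. The quotient $G/(G'\cdot Z(G))$ is a quotient of the finitely generated abelian group $G/G'$, so its torsion subgroup $M(G)/(G'\cdot Z(G))$ is finite; let $k$ be its order. Then $g\in M(G)$ iff $g^{k}\in G'\cdot Z(G)$, so $\theta(x^{k})$ defines $M(G)$ in $G$. For uniformity, note that for every integer $n\neq 0$ the sentence $\forall x\,(\theta(x^{n})\to\theta(x^{k}))$ is true in $G$ (if $x^{n}\in G'\cdot Z(G)$ then $x\in M(G)$, hence $x^{k}\in G'\cdot Z(G)$), hence true in every $H\equiv G$; together with the trivial inclusion $\{x:H\models\theta(x^{k})\}\subseteq Is(H'\cdot Z(H))$ this shows $\theta(x^{k})$ defines $M(H)$ in $H$. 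So $M(G)$ is uniformly definable.

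Finally, $N(G)$ and $M(G)$ are both normal in $G$ and $N(G)\unlhd M(G)$, and $M(G)/N(G)$ is a quotient of the finite abelian group $M(G)/(G'\cdot Z(G))$, hence finite abelian; by Lemma~\ref{inter-quo:lem} the quotient $M(G)/N(G)$ is interpretable in $G$ uniformly with respect to $Th(G)$. The only genuine subtlety here is making the bound $k$ work uniformly across all models of $Th(G)$, which is exactly what the family of sentences $\forall x\,(\theta(x^{n})\to\theta(x^{k}))$ takes care of; everything else is routine model theory.
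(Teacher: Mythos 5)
Your proposal is correct and follows essentially the route the paper intends: the corollary is stated without a separate proof precisely because it combines the uniform definability of $G'$, $Z(G)$ and $Is(G')$ (the latter via the same power-of-$x$ trick with the sentences $\forall x(\phi(x^n)\to\phi(x^m))$, which you replicate for $Is(G'\cdot Z(G))$) with Lemma~\ref{inter-quo:lem} for the quotient. Your verification of uniformity for $\theta(x^k)$ and the finiteness of $M(G)/N(G)$ as a quotient of the torsion part of $G/(G'\cdot Z(G))$ is exactly the intended argument.
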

\begin{lem}There exists a central series
 \begin{equation}\label{fcs}\begin{split} G=G_1 > G_2 &> \ldots > G_p > Is(G'\cdot Z(G))> Is(G')\cdot Z(G) > Is(G')\\
&> G'> \ldots >G_t >G_{t+1}=1,\end{split}\end{equation}
such that each quotient of the consecutive terms of the series except possibly the quotient $Is(G')\cdot Z(G) > Is(G')$
is either finite or the action of $\Z$ on that quotient is interpretable in $G$.\end{lem}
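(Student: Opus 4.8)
The plan is to obtain \eqref{fcs} in two stages. In the first stage I route a uniformly definable central series of $G$ through the four distinguished subgroups $G'\le Is(G')\le N(G)\le M(G)$, where $N(G)=Is(G')\cdot Z(G)$ and $M(G)=Is(G'\cdot Z(G))$; in the second stage I chop each of the resulting quotients -- all of them except the one between $N(G)$ and $Is(G')$ -- into finitely many pieces that are either finite or carry an interpretable action of $\Z$.

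For the first stage I start from the lower central series $(R)$ of $G$, whose terms are definable in $G$ uniformly with respect to $Th(G)$ by Proposition~\ref{prop62} (refining $(R)$ further if necessary). By Proposition~\ref{prop64} the completed associated series $(U(R))$ and $(L(R))$ have terms interpretable in $G$ uniformly with respect to $Th(G)$, the ring $A_R(G)$ is interpretable in $G$ uniformly with respect to $Th(G)$, and $A_R(G)$ acts interpretably on every quotient of $(L(R))$ with the single exception of the special gap $Z(G)\cdot G'/G'$. The subgroups $G'$, $Is(G')$, $N(G)$ and $M(G)$ are all definable in $G$ uniformly with respect to $Th(G)$ -- the first by Proposition~\ref{prop62}, the second by the lemma on definability of $Is(G')$ proved above, the last two by Corollary~\ref{gq}. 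Adjoining these four subgroups to $(L(R))$ and discarding repetitions produces a central series of uniformly definable subgroups refining $(L(R))$ and passing through $G'$, $Is(G')$, $N(G)$, $M(G)$. In this refined series the finite torsion part $I(G)\cdot G'/G'$ of the special gap is absorbed into the finite quotient $Is(G')/G'$, so the only surviving trace of the special gap is the free abelian quotient $N(G)/Is(G')\cong Z(G)/I(G)\cong G_0$; on every other quotient $A_R(G)$ still acts interpretably, uniformly with respect to $Th(G)$.

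For the second stage, note that $A_R(G)$ -- being a subring of a finite direct product of endomorphism rings of finitely generated abelian groups -- is an associative commutative Noetherian ring with finitely generated additive group, so Proposition~\ref{primary:thm} supplies an interpretable decomposition of zero $0=\p_1\cdots\p_m$ into prime ideals, uniform with respect to $Th(A_R(G))$. Given a quotient $\bar M$ of the refined series on which $A_R(G)$ acts, I insert the corresponding $\mathfrak{P}$-series $\bar M\ge\p_1\bar M\ge\cdots\ge\p_1\cdots\p_m\bar M=0$ of $A_R(G)$-submodules; pulled back to $G$ these are central subgroups lying between the endpoints of the gap and are interpretable in $G$ uniformly with respect to $Th(G)$ by Corollary~\ref{corbig} and Lemma~\ref{inter-quo:lem}. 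On the $i$-th factor of this refinement $A_R(G)$ acts through a quotient ring which, according to the characteristic of the integral domain $A_R(G)/\p_i$, is either $\Z$ -- in which case the action of $\Z$ on that factor is itself interpretable in $G$ uniformly with respect to $Th(G)$ by Proposition~\ref{basedef} and Corollary~\ref{corbig} -- or a finite field $\Z/p\Z$, in which case the factor is a finite-dimensional $\Z/p\Z$-vector space, hence finite. Performing this $\mathfrak{P}$-series refinement on each quotient of the series from the first stage, and using that $M(G)/N(G)$ (Corollary~\ref{gq}) and $Is(G')/G'$ are finite, yields the series \eqref{fcs} with the stated property, the only possible exception being $N(G)/Is(G')=Is(G')\cdot Z(G)/Is(G')$.

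The delicate point -- and the one I expect to be the main obstacle -- is the bookkeeping of the first stage: one must arrange that every inserted term is simultaneously central in $G$, uniformly definable in $G$, and such that $A_R(G)$ still acts interpretably on each newly created quotient other than $Is(G')\cdot Z(G)/Is(G')$. The subtle gaps are the top ones, $G>M(G)$ and $M(G)>N(G)$: here one checks that $V_R\le M(G)$ and that $M(G)=Is(N(G))$ is exactly the preimage of the torsion subgroup of $G/V_R$, so $M(G)/V_R$ is $A_R(G)$-invariant and $G/M(G)$ remains a uniformly interpretable $A_R(G)$-module quotient of $G/V_R$, while $M(G)/N(G)$ is finite. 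Once this is in place -- together with the elementary observation that routing the series through $Is(G')$ creates no quotient beyond the reach of $A_R(G)$, since everything below $G'$ is handled by $(L(R))$ and everything between $Is(G')$ and $G'$ is finite -- the rest is routine given Propositions~\ref{primary:thm} and~\ref{basedef}.
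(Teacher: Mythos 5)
Your overall route is the paper's: start from the lower central series, pass to the completed series and the ring $A_\Gamma$ via Proposition~\ref{prop64}, and then refine each quotient by the $\mathfrak{P}$-series coming from Propositions~\ref{primary:thm} and~\ref{basedef} (your Stage~2 is essentially the paper's appeal to Proposition~\ref{basedef}). The genuine gap is exactly at the point you yourself flag as delicate, and your patch for it does not work. First, ``adjoining $G'$, $Is(G')$, $N(G)$, $M(G)$ to $(L(R))$ and discarding repetitions'' need not produce a chain: $Is(G')$ need not be comparable with the terms of $(L(R))$ above $G'$. For example, for the class-$2$ group $G=\langle a,b \mid a^5 \text{ central},\ [a,b]=a^5\rangle$ one computes $Z(G)\cdot G'=\langle a^5,b^5\rangle$ and $Is(G')=\langle a\rangle$, and neither contains the other, so the adjoined family is not totally ordered. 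Second, the inclusion $V_R\le M(G)$ on which you base the treatment of the top gaps is false in general: let $N$ be free nilpotent of class $3$ on $b,c$, set $w=[b,c]$, $t=[w,b]$, and let $G=N\rtimes\langle a\rangle$ with $a$ acting by $b\mapsto bt$, $c\mapsto c$. Then $Z(G)=\Gamma_3(G)$, $G'\cdot Z(G)=\Gamma_2(G)=M(G)$, while $[a,G]\subseteq\Gamma_3(G)$ and $[a,R^u_2]=1$, so $a\in V_\Gamma\setminus M(G)$; in fact $M(G)\subsetneq V_\Gamma$ here, so $M(G)$ is not the preimage of the torsion subgroup of $G/V_R$ and $G/M(G)$ is not a quotient of $G/V_R$, so the claimed $A_R(G)$-module structure above $M(G)$ does not come for free.

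The paper closes precisely this gap by a push-forward trick that is missing from your write-up: it first refines only the portion $G>Z(G)\cdot G'$ (where $A_\Gamma$ does act) into a series $G=G_1>\cdots>G_l>Z(G)\cdot G'$ whose quotients are finite or carry an interpretable $\Z$-action, and then replaces each term by $G_i\cdot Is(G'\cdot Z(G))$, obtaining a central series from $G$ down to $M(G)$ through which $N(G)$, $Is(G')$, $G'$ can then be inserted; the quotients of the new series are images of the old ones under the interpretable epimorphisms $\phi_i:G_i/G_{i+1}\to (G_i\cdot M(G))/(G_{i+1}\cdot M(G))$, and the interpretable $\Z$-action is transported along these maps (Lemma~\ref{inter-quo:lem}). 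Together with the finiteness of $M(G)/N(G)$ and $Is(G')/G'$ and the refinement of the part below $G'$ already provided by $(L(\Gamma))$, this yields \eqref{fcs}. Without this step (or some substitute for it), your Stage~1 does not go through.
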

\begin{proof} Consider the lower central series
$$G=\Gamma_1(G)>\Gamma_2(G)>\ldots >\Gamma_c(G)>\Gamma_{c+1}=1, \quad (\Gamma)$$
of $G$. Recall the completed series $(L(\Gamma))$ associated to $(\Gamma)$ (see Section~\ref{cs}). Then by Proposition~\ref{prop64} there is a ring $A_\Gamma$
whose action on each quotient of $(L(\Gamma))$, except possibly on the quotient of the gap $G'\cdot Z(G)> G'$, is interpretable in $G$ uniformly. By Proposition~\ref{ringf} the ring $A_\Gamma$ is a commutative associative $\Z$-algebra with unit which has a finitely generated additive group. So each quotient of the $(L(\Gamma))$, except the one corresponding to the gap, is a finitely generated $A_\Gamma$-module. So by Proposition~\ref{basedef} and its proof there exists a refinement $(L'(\Gamma))$ so that
\begin{itemize}
\item each term of $(L'(\Gamma))$ is definable in $G$ uniformly with respect to $Th(G)$, and
\item each corresponding quotient (except the quotient corresponding to the gap) is either finite or is infinite and the action of $\Z$ on it is interpretable in $G$ uniformly with respect to $Th(G)$.\end{itemize}
 Since the quotients $M(G)/N(G)$ and $Is(G')/G'$ are finite and appropriate terms of $(L'(\Gamma))$ can be used to fill in the gap $G'> 1$ we only need to find a refinement of $G>Is(G'\cdot Z(G))$ satisfying the condition. Again by Proposition~\ref{basedef} there is a series
 $$G=G_1 > G_2 >\ldots > G_l > Z(G)\cdot G'=G_{i_0+1},$$
for some $i_0\in \N$ each quotient of which has the desired property. So consider the series
$$G=G_1 > G_2\cdot\isg> G_3\cdot \isg > \ldots > G_{i_0+1}\cdot \isg.$$
For each $i$ consider the obvious epimorphism
$$\phi_i: G_i/G_{i+1}\rightarrow (G_i\cdot\isg)/ (G_{i+1}\cdot \isg).$$
Each $\phi_i$ is interpretable in $G$ and thus can be used in an obvious manner to interpret the action of $\Z$ on $(G_i\cdot\isg)/ (G_{i+1}\cdot \isg)$. This finishes the proof.

\end{proof}

\begin{rem} In the following statements we assume $ Z(G)> Is(G')\cap Z(G) $ and $M(G)/N(G)\neq 1$, just to make notation less complicated and statements shorter. It will be clear from discussions that analysis of the ``tame special gap case'' is actually much easier and follows by making proper modifications in the arguments that follow.\end{rem}

\begin{lem}\label{sp1}Consider the series~\eqref{fcs} obtained in Lemma~\ref{bigseries}.  Then there exists a pseudo-basis $\bar{u}$ of $G$ of length $m$ adapted
to~\eqref{fcs} and there are natural numbers $1< i_0<i_1<i_2<m$ so that:
\begin{enumerate}
\item $(u_1M(G), u_2M(G), \ldots, u_{i_0}M(G)$ is pseudo-basis of $G/M(G)$,
\item $(u_{i_0+1}N(G), u_{i_0+2}N(G), \ldots, u_{i_1}N(G))$ is pseudo-basis of $M(G)/N(G)$,
 \item $(u_{i_1+1}Is(G'), u_{i_1+2}Is(G'),\ldots , u_{i_2}Is(G'))$ is a pseudo-basis of $N(G)/Is(G')$,
\item $(u_{i_2+1}, u_{i_2+2}, \ldots , u_m)$ is a pseudo-basis of $Is(G')$.
\end{enumerate}		
Moreover if $K_i=\langle u_i, \ldots , u_m\rangle$, then the series
$$G=K_1>K_2>\ldots >K_m>1 \qquad (K),$$
is a central series and unless $i_1+1< i \leq i_2$ each subgroup $K_i$ is definable in $G$ with constants $(u_i, \ldots , u_m)$. \end{lem}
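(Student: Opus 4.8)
The plan is to build the pseudo-basis $\bar{u}$ by concatenating pseudo-bases chosen on each of the four factor groups appearing in the chain
$$G \geq M(G) \geq N(G) \geq Is(G') \geq 1,$$
and then to refine each of these four ``blocks'' further so that the resulting pseudo-basis is in fact adapted to the finer series~\eqref{fcs}. Concretely, first I would invoke Corollary~\ref{gq} and Lemma~\ref{bigseries}, together with the fact that $Is(G')$ is uniformly definable, to see that all four subgroups $M(G)$, $N(G)$, $Is(G')$ and $1$ are definable in $G$, and that~\eqref{fcs} is a central series refining this chain. Then, working from the bottom up (so that at each stage the lower terms are already fixed), I would apply the coordinatization machinery of Section~\ref{coordinatization}: each factor $G_i/G_{i+1}$ of~\eqref{fcs} is a finitely generated abelian $K$-group, hence has a base of some period with a chosen section, and lifting these bases term by term along~\eqref{fcs} produces a pseudo-basis $\bar{u}=(u_1,\dots,u_m)$ adapted to~\eqref{fcs} in the sense of the definition preceding Theorem~\ref{UC}. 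By arranging~\eqref{fcs} so that $M(G)$, $N(G)$ and $Is(G')$ each occur as one of its terms $G_{j}$ (which is possible precisely because those subgroups are among the terms of~\eqref{fcs} as written), the indices $i_0, i_1, i_2$ are simply the positions in $\bar{u}$ at which we pass through $M(G)$, $N(G)$, $Is(G')$ respectively, and items (1)--(4) then hold by construction: the images of $u_1,\dots,u_{i_0}$ form a pseudo-basis of $G/M(G)$ refining the image of~\eqref{fcs}, and similarly for the other three blocks.

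For the ``moreover'' part, set $K_i=\langle u_i,\dots,u_m\rangle_K$. That $(K)$ is a central series follows from the proposition on pseudo-bases in Section~\ref{coordinatization} (part 2 of that proposition), applied to the pseudo-basis $\bar{u}$: the subgroups generated by tail segments of a pseudo-basis always form a central series. The definability claim splits according to which block the index $i$ lies in. If $i \leq i_1+1$, then $K_i \supseteq Is(G')$ and in fact $K_i$ is the complete preimage in $G$ of the tail subgroup of $\bar{u}$ in the (definable) quotient $G/Is(G')$ generated by the relevant $u_j Is(G')$; since $Is(G')$ is uniformly definable and each such quotient subgroup is definable using the parameters $u_i,\dots,u_{i_1}$ (these images being among the terms of the coordinatized refinement of $G/M(G)$, $M(G)/N(G)$, $N(G)/Is(G')$, all of which are uniformly definable by Corollary~\ref{gq} and its surrounding results), $K_i$ is definable with the displayed parameters. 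If $i > i_2$, then $K_i \leq Is(G')$ is a term of the coordinatized refinement of $Is(G')$ itself, hence definable with parameters $u_i,\dots,u_m$. Only in the range $i_1+1 < i \leq i_2$ — inside the block $N(G)/Is(G')$ — do we lack a uniform definability statement, which is exactly the content of the exception: there the subgroup $N(G)/Is(G')$ sits between $G'$ and $Z(G)$ in a way tied to the special gap, and no first-order definition is claimed.

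The main obstacle I anticipate is the bookkeeping needed to guarantee that a single pseudo-basis can be chosen that is simultaneously (i) adapted to~\eqref{fcs}, (ii) compatible with the four-term filtration by $M(G) \geq N(G) \geq Is(G')$, and (iii) such that the tail subgroups $K_i$ are definable outside the forbidden range. The key point making this work is that~\eqref{fcs} is itself a refinement that passes through $M(G)$, $N(G)$, $Is(G')$, so the four blocks are unions of consecutive factors of~\eqref{fcs}; one then only needs the elementary fact that a pseudo-basis of $G$ adapted to a central series restricts, on each definable ``window'' of that series, to a pseudo-basis of the corresponding quotient. The definability of $K_i$ for $i\le i_1+1$ is the one spot requiring genuine care, and it hinges on the observation that $K_i$ equals the preimage of a parameter-definable subgroup of the definable quotient $G/Is(G')$; the remaining verifications (that $(K)$ is central, that items (1)--(4) hold) are routine consequences of the pseudo-basis formalism of Section~\ref{coordinatization}.
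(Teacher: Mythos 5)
Your construction of the adapted pseudo-basis, the identification of the indices $i_0<i_1<i_2$, and the centrality of the series $(K)$ all match what the paper treats as the routine part ("most of the statement is clear"). The gap is in the definability argument, which is the only part the paper actually proves, and your mechanism for it does not work as stated. You repeatedly pass from "$K_i$ (or its image in a definable quotient) is \emph{generated} by the displayed parameters" to "$K_i$ is \emph{definable} with the displayed parameters," but this implication is false in general: already in the free abelian group $\Z^2$ the cyclic subgroup generated by $(1,0)$ is not first-order definable with $(1,0)$ as a parameter (by quantifier elimination for modules, parameter-definable subgroups there are Boolean combinations of cosets of the subgroups $s\Z^2$, all of finite index). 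So saying that $K_i$ is the preimage in $G/Is(G')$ of "the subgroup generated by the relevant $u_jIs(G')$," or that a tail of the coordinatized refinement of $Is(G')$ is "hence definable with parameters $u_i,\dots,u_m$," leaves exactly the hard point unproved; Corollary~\ref{gq} gives definability of $M(G)$ and $N(G)$, not of finitely generated subgroups spanned by parameters.

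What makes the definability go through -- and what delimits the exceptional range $i_1+1<i\leq i_2$ -- is the interpretable action of $\Z$ on the quotients of the series \eqref{fcs} outside the special gap, supplied by the lemma that constructs \eqref{fcs}; your proposal never invokes it. The paper argues by downward induction on $i$: each quotient $K_i/K_{i+1}$ outside the special block is finite or carries an interpretable $\Z$-action, so once $K_{i+1}$ is definable one can define
$$K_i=\{xy:\ \exists a\in \Z\ (xK_{i+1}=(u_iK_{i+1})^a \wedge y\in K_{i+1})\},$$
because $\langle u_iK_{i+1}\rangle$ is interpretable via that $\Z$-action. The induction starts at the cyclic group $K_m$, runs down to $i=i_2+1$, the forbidden block (where no $\Z$-action is available -- this is the special gap, and precisely why those $K_i$ are excluded) is bridged in one step by noting $K_{i_1+1}=Is(G')\cdot Z(G)=N(G)$ is uniformly definable by Corollary~\ref{gq}, and then the same induction resumes for $i\leq i_1$. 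Your proof should be repaired along these lines; as written, the claimed definability for $i>i_2$ and $i\leq i_1$ is asserted rather than proved.
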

\begin{proof}Most of the statement is clear. We shall just comment on definability of the $K_i$, $i\neq i_1+2, \ldots , i_2$. To show this we proceed by induction on $i$. Indeed either $K_m$ is infinite cyclic and $K_m=u_m^{\Z}$ where the action of $\Z$ on $u_m$ is interpretable in $G$ or it is finite cyclic. So assume $i> i_2$ and assume the statement is true for $i+1$. Then
$$K_i=\{xy: \exists a\in \Z (xK_{i+1}= (u_iK_{i+1})^a \wedge y\in K_{i+1})\},$$
where the right hand side describes a definable subset by induction hypothesis and the fact that $\langle u_iK_{i+1}\rangle$ is interpretable in $G$ as it admits an interpretable action of $\Z$. If $i=i_1+1$ then $K_{i}=Is(G')\cdot Z(G)$ which is definable uniformly by Corollary~\ref{gq} and the result follows by an induction on $i$ for $i\leq i_1$. 

\end{proof}
\begin{lem} \label{mainlem} Given a pseudo-basis $\bar{u}$ of $G$ of length $m$ and $K_i\leq G$ as in Lemma~\ref{sp1} and Lemma~\ref{stb} assume that $e_i$ are the periods of the $u_i$ and $t_k([u_i, u_j])$ and $t_k(u_i^{e_i})$ are the structure constants associated to $\bar{u}$.  Assume also $n=i_1-i_0$ and $p=i_2-i_1$. Then there exists a first order formula $\Phi(x_1, x_2, \ldots, x_m)$ in $L$ so that
$$G\models \Phi(\bar{u}),$$
and $\Phi(\bar{u})$ expresses that:
\begin{enumerate}
\item for all $x\in G$ there exists a unique tuple
$$(a_1,a_2, \ldots a_{i_1}, a_{i_2+1}, a_{i_2+2},\ldots , a_m)\in \Z^{m-p},$$
and a tuple $$(g_1,g_2, \ldots, g_{i_1}, g_{i_2+1}, g_{i_2+2},\ldots , g_m)\in G^{m-p},$$
and an element $w_0 \in (Z(G) \setminus Is(G'))\cup {1}$ unique modulo $Is(G')\cap Z(G)$,
so that $$x= \prod_{\substack{1\leq i \leq m \\
i\neq i_1+1, \ldots, i_2}}g_iw_0,$$
 and $g_iK_{i+1}=(u_iK_{i+1})^{a_i}$,
\item for all $1\leq i,j \leq m$
$$[u_i, u_j]=\prod_{k=i_2+1}^m u_k^{t_k([u_i,u_j])},$$
\item  for all $i$ except $i=i_0+1, \ldots , i_1$, if $e_i\neq \infty$ then  $$u_i^{e_i}=\prod_{k=i_2+1}^m u_k^{t_k(u_i^{e_i})},$$
\item for each $i=i_0+1, \ldots , i_1,$
 $$u_i^{e_i}=\prod_{k=i_1+1}^m u_k^{t_k(u_i^{e_i})},$$
\item the set $\{u_{i_1+1}Is(G'), u_{i_1+2}Is(G'), \ldots , u_{i_2}Is(G')\}$ is a maximal linearly independent subset over $\Z$ of $(Is(G')\cdot Z(G))/Is(G')$ and therefore the rank of the addition $G_0$ containing these $u_i$'s (and therefore the rank of any addition) is $p=i_2-i_1$.
\end{enumerate}\end{lem}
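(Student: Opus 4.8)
The plan is to build the formula $\Phi$ by assembling pieces that encode, respectively, the structure constant data of the pseudo-basis $\bar u$ and the ``normal form'' statement for elements of $G$, then verify each conjunct is first-order expressible in the language $L$ of groups and holds on $\bar u$. Since by Lemma~\ref{sp1} and Corollary~\ref{gq} the subgroups $M(G)=Is(G'\cdot Z(G))$, $N(G)=Is(G')\cdot Z(G)$, $Is(G')$, $Z(G)$ are all definable in $G$ (uniformly, hence certainly with parameters $\bar u$), and the terms $K_i=\langle u_i,\dots,u_m\rangle$ are definable with parameters for $i\le i_1$ and $i>i_2$, I can freely use these as definable predicates inside $\Phi$. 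The conjunct (2) is immediate: for fixed $i,j$ the element $[u_i,u_j]$ lies in $Is(G')\cap\langle u_{i_2+1},\dots,u_m\rangle$ (since $K_{i_2+1}=Is(G')$ and commutators fall into it), and the assertion $[u_i,u_j]=\prod_{k=i_2+1}^m u_k^{t_k([u_i,u_j])}$ is a single group-word equation with the integer exponents $t_k(\cdot)$ hard-coded as iterated products — finitely many such equations, one per pair $(i,j)$. Conjuncts (3) and (4) are of the same nature: equations $u_i^{e_i}=\prod u_k^{t_k(u_i^{e_i})}$ with the product ranging over $k>i_2$ in case (3) and over $k>i_1$ in case (4); the split reflects that for $i$ in the ``special'' block $i_0<i\le i_1$ the relator $u_i^{e_i}$ may land in $N(G)$ rather than $Is(G')$, which is exactly where the special gap lives.

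For conjunct (1), the existence-and-uniqueness of the normal form, the idea is to express it as a conjunction of a ``spanning'' $\forall\exists$ statement and a ``uniqueness'' $\forall$ statement. The spanning part says: for every $x$ there exist elements $g_1,\dots,g_{i_1},g_{i_2+1},\dots,g_m$ and $w_0$ with $g_i\in K_i$, $g_i K_{i+1}\in\langle u_iK_{i+1}\rangle$ (expressible because $\langle u_iK_{i+1}\rangle$ carries an interpretable $\Z$-action, or is finite cyclic, for the indices in range), $w_0\in (Z(G)\setminus Is(G'))\cup\{1\}$, and $x=\bigl(\prod_{i\ne i_1+1,\dots,i_2} g_i\bigr)w_0$. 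The point is that this is just a reformulation of the Mal'cev/pseudo-basis coordinatization of $G$ along the series $(K)$ restricted by collapsing the ``addition block'' $u_{i_1+1},\dots,u_{i_2}$ into the single central factor recorded by $w_0$; by the coordinatization results (the pseudo-basis proposition in Section~\ref{coordinatization}) such a decomposition genuinely exists in $G$, so $G\models$ the spanning sentence. Uniqueness modulo $Is(G')\cap Z(G)$ is expressed by saying: if two such products are equal then the corresponding $a_i$ agree and the two $w_0$'s differ by an element of $Is(G')\cap Z(G)$ — again a first-order condition since $Is(G')\cap Z(G)$ is definable and equality of the $\Z$-exponents $a_i$ can be phrased via the interpretable $\Z$-actions on the cyclic factors. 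For conjunct (5), I would express that $\{u_{i_1+1}Is(G'),\dots,u_{i_2}Is(G')\}$ is a maximal $\Z$-independent subset of $N(G)/Is(G')$: independence is the statement that no nontrivial integer combination $\prod u_k^{b_k}$ ($i_1<k\le i_2$, not all $b_k$ zero) lies in $Is(G')$, and maximality is that every element of $N(G)/Is(G')$ has a nonzero integer multiple in the subgroup they generate — both first-order over $G$ given definability of $N(G)$, $Is(G')$ and the $\Z$-action on $N(G)/Is(G')$. That the rank equals $p=i_2-i_1$ then follows because $N(G)/Is(G')\cong Z(G)/(Z(G)\cap Is(G'))$ modulo torsion is free of rank $p$, and all additions have the same rank (Lemma~\ref{bigseries} gives $G_0\cong H_0$; here it just fixes the numerology).

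The main obstacle, and the place I would spend the most care, is the precise first-order rendering of conjunct~(1): one must make sure that the phrases ``$g_iK_{i+1}\in\langle u_iK_{i+1}\rangle$'' and ``unique tuple of integers $a_i$'' are legitimate in the \emph{pure} group language $L$ — no explicit $\Z$-sort is available, so each such clause has to be unwound into the interpretation of $\Z$ acting on the relevant cyclic quotient (for the infinite factors) or into a finite disjunction (for the finite factors), and one has to check these interpretations are uniform enough that the single formula $\Phi$ works. The subtlety of the index ranges — why the product in (1) skips $i_1+1,\dots,i_2$ and why $w_0$ absorbs precisely the addition block modulo $Is(G')\cap Z(G)$ — is what must be handled correctly; once the bookkeeping is right, each conjunct reduces to a finite Boolean combination of definable-subgroup membership statements and hard-coded word equations, and the verification that $G\models\Phi(\bar u)$ is just the assertion that $\bar u$ was chosen to satisfy the defining properties in Lemma~\ref{sp1}. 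I would therefore organize the write-up as: (i) record the definable predicates available; (ii) write each conjunct (2)--(5) as an explicit $L$-formula and note it holds by construction; (iii) devote the bulk of the argument to conjunct (1), deriving the spanning statement from the pseudo-basis coordinatization and the uniqueness statement from the fact that the $K_i$ form a central series with the stated cyclic quotients.
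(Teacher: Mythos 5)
Your treatment of conjuncts (1)--(4) is essentially the paper's own: (2)--(4) are finitely many word equations with the fixed integers $t_k(\cdot)$ and $e_i$ hard-coded, and (1) is obtained from the pseudo-basis decomposition along the series $(K)$ together with uniform definability of $Z(G)$, $Is(G')$ and of the subgroups $K_i$, using the interpretable $\Z$-actions on the quotients $K_i/K_{i+1}$ for the indices outside the block $i_1+1,\ldots,i_2$ (which is exactly why the product in (1) skips that block). The genuine gap is in conjunct (5). You propose to express independence (``no nontrivial integer combination lies in $Is(G')$'') and maximality directly, and you justify the required quantification over integer exponents by ``definability of \ldots the $\Z$-action on $N(G)/Is(G')$''. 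But $N(G)/Is(G')\cong G_0$ is precisely the special gap quotient, and the central point of the whole paper is that no interpretable $\Z$-action on this quotient is available: Proposition~\ref{prop64} excludes exactly this gap. Indeed, if such an action were interpretable uniformly with respect to $Th(G)$, one could write into $\Phi$ that $u_{i_1+1},\ldots,u_{i_2}$ \emph{generate} $G_0$ modulo $Is(G')$, and then the transfer argument of Theorem~\ref{maincor} would force every finitely generated $H\equiv G$ to be isomorphic to $G$, contradicting Zilber's example analyzed in Section~\ref{Zilber:sec}. So, as written, your clause (5) is not a formula of $L$, and the tool you invoke to make it one cannot exist.

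The paper circumvents this with a finite surrogate: the quotient $P=Is(G'\cdot Z(G))/Is(G')$ is uniformly interpretable in $G$ as a group (no $\Z$-action needed), and for a fixed integer $e\geq 2$ one writes the sentence asserting that the images of $u_{i_1+1},\ldots,u_{i_2}$ form a basis of the finite group $P/eP$. Since a subgroup $Q\leq P\cong\Z^p$ with $Q+eP=P$ has finite index in $P$, this first-order condition \emph{implies} statement (5), which is all the lemma requires ($\Phi$ need only express or imply the listed properties). Moreover the choice of a specific $e$ is not cosmetic: in the proof of item 3(c) of Theorem~\ref{maincor} the fact that the index of the subgroup generated by the transferred elements is prime to the chosen $e$ is exactly what yields $\gcd(d,e)=1$. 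To repair your write-up, replace the direct rendering of (5) by such a finite-quotient (congruence mod $e$) condition; the rest of your argument then goes through as in the paper.
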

\begin{proof}
Pick a pseudo-basis $\bar{u}$ like the one in Lemma~\ref{sp1}. Then (1) is true for $\bar{u}$ by Lemma~\ref{sp1} and uniform definability of $Z(G)$ and $Is(G')$. Statements (2), (3) and (4) are clearly true for $\bar{u}$ by choice and are first-order in an extension $(L,\bar{u})$ of $L$. Note that $t_k$ and $e_i$ are fixed known integers. Also note that
\begin{itemize}
\item $t_k([u_i,u_j])=0$ for any $i$ and $j$, if $k\leq i_2$,
\item $t_k(u_i^{e_i})=0$ if $k=i_1+1, \ldots ,  i_2$ for any $i$ except $i=i_0+1, \ldots , i_1$. \end{itemize}
To express (5) in $L$ firstly recall that $G_0\cong Is(G'\cdot Z(G))/Is(G')=P$ where the right hand side is interpretable in $G$ uniformly. So given any integer $e\geq 2$ we can write a sentence of $L$ that says that the image of $$\{u_{i_1+1}, u_{i_1+2}, \ldots , u_{i_2}\}$$
in $P/eP$ is a basis of $P/eP$, which implies (5).

\end{proof}
Indeed what Lemma~\ref{mainlem} states is that the length and periods associated to the pseudo-basis $\bar{u}$ are captured uniformly in $Th(G)$. Therefore all the structure constants of $\bar{u}$ are captured in $Th(G)$ except those $t_k(u_i^{e_i})$ where $i_0+1\leq i \leq i_1$ and $i_1+1\leq k \leq i_2$. This is mainly due to our failure in expressing in $L$ that $\{u_{i_1+1}, u_{i_1+2}, \ldots , u_{i_2}\}$ is a generating set for $G_0$ despite our success in expressing that this set is a maximal linearly independent set.

Let us make the following agreement stated as a lemma on the choice of elements $u_i$, $i_0+1\leq i \leq i_2$.   
\begin{lem}\label{stb} One can choose the pseudo-basis $\bar{u}$ of $G$ so that
for $i_0+1 \leq i \leq i_1$ and $i_1+1 \leq k \leq i_2$, 
\begin{itemize}
\item $t_k(u_i^{e_i})=1$ if $k= i+n$,
\item $t_k(u_i^{e_i})=0$ if $k\neq i+n$\end{itemize}
where $n=i_1-i_0$.\end{lem}
\begin{proof}
Indeed, the abelian group $(Is(G')\cdot Z(G))/Is(G')$ is a finite index subgroup of the free abelian group $Is(G'\cdot Z(G))/Is(G')$. So by structure theory of finitely generated abelian groups one can choose a basis $w_1Is(G'), \ldots, w_pIs(G')$, of  $Is(G'\cdot Z(G))/Is(G')$, where $p=i_2-i_1$, so that $w_1^{e_{i_0+1}}Is(G'), \ldots, w_p^{e_{i_0+p}}Is(G')$ is a basis of $(Is(G')\cdot Z(G))/Is(G')$. Moreover to make the number $i_1-i_0=n$ minimal we can make the choice so that we get the invariant factor decomposition:
$$\frac{Is(G'\cdot Z(G))}{Is(G')\cdot Z(G)}\cong \frac{\Z}{e_{i_0+1}\Z}\oplus\cdots\oplus \frac{\Z}{e_{i_1}\Z},$$
where $e_{i_0+1}|e_{i_0+2}|\ldots |e_{i_1},$ and for each $i$, $|e_i|>1$.

Now set:
\begin{itemize}
\item $u_{i_0+j}=w_j$ for $1\leq j \leq n$,
\item $u_{i_1+j}=w_j^{e_{i_0+j}}$ for $1\leq j \leq n$, and
\item $u_{i_1+j}=w_j$ for $n+1\leq j\leq p$ \end{itemize}
to get the desired result.

\end{proof}

\begin{prop}\label{maincor} Assume $G\equiv H$ are finitely generated nilpotent groups and $\bar{u}$ is the pseudo-basis of $G$ appearing in Lemma~\ref{mainlem} and Lemma~\ref{stb}. Then there exists a polycyclic central series $(L)$ of $H$
\begin{align*} H=L_1 &> \ldots >L_{i_0+1}=Is(H'\cdot Z(H))>L_{i_0+2}>\ldots>L_{i_1+1}=Is(H')\cdot Z(H)\\
&> L_{i_1+2} > \ldots >L_{i_2+1}=Is(H')> L_{i_2+2}>\ldots >L_m>1,\end{align*}
where each term $L_i$, except possibly when $i_1+1<i\leq i_2$, is definable in $H$ using the same formula that defines $K_i$ in $G$. Moreover
there exists a pseudo-basis $\bar{v}$ of  $H$ of length $m$ adapted to the series above such that
  \begin{enumerate}
  \item for all $1\leq i \leq m$
$$[v_i, v_j]=\prod_{k=i_2+1}^m v_k^{t_k([u_i,u_j])},$$
\item for all $i$ except $i=i_0+1, \ldots , i_1$, if $e_i\neq \infty$ then
$$v_i^{e_i}=\prod_{k=i_2+1}^m v_k^{t_k(u_i^{e_i})},$$
\item there exists a basis $\{v_iIs(H'): i_1+1\leq i \leq i_2\}$ of $Is(H')\cdot Z(H)/Is(H')$, integers $d_k$, $i_1+1\leq k \leq i_1+n$, and an $n\times n$ matrix of integers $c_{ik}$, $i_0+1\leq i\leq i_1$, $i_1+1\leq k \leq i_1+n$ such that

\begin{enumerate}
\item $\displaystyle v_i^{e_i}=\left(\prod_{k=i_1+1}^{i_1+n} v_k^{d_kc_{ik}}\right)\left(\prod_{k=i_2+1}^m v_k^{t_k(u_i^{e_i})}\right),$ for each $i_0+1\leq i \leq i_1$,
\item $|det(c_{ik})|=1$,
\item gcd$(d,e)=1$ where $d=d_{i_1+1}\cdots d_{i_1+n}$ and $e= e_{i_0+1}\cdots e_{i_1}$.  

\end{enumerate}\end{enumerate}\end{prop}
\begin{proof} Let $\Phi$ be the formula obtained in Lemma~\ref{mainlem}. Then $H\models \exists \bar{x}~\Phi(\bar{x})$ and so we may pick a tuple
$$(v_1,v_2, \ldots, v_{i_1}, w_{i_1+1}, \ldots, w_{i_2}, v_{i_2+1}, v_{i_2+2},\ldots , v_m)\in H^{m},$$
that satisfies $\Phi(\bar{x})$. Set $L_i$ as the subgroup defined by the formula defining $K_i$ in $G$. So indeed $L_i=\langle v_i, v_{i+1}, \ldots, v_m\rangle$ if $i> i_2$ and $L_i=\langle v_i, \ldots, v_{i_1}, Is(H')\cdot Z(H) \rangle$ if $i\leq i_1$. The elements $w_{i_1+1}Is(H'), \ldots, w_{i_2}Is(H')$ form a maximal linearly independent subset of the free abelian group $Is(H')\cdot Z(H)/Is(H')\cong H_0$ of rank $p=i_2-i_1$. Considering Lemma~\ref{stb}, part (4) of Lemma~\ref{mainlem} implies for $i_0+1 \leq i \leq i_1$ that:
\begin{equation}\label{viei} v_i^{e_i}= w_{i+n}\prod_{k=i_2+1}^mu_k^{t_k(u_i^{e_i})}.\end{equation}
Again by structure theory of finitely generated abelian groups there exists a basis $\{v_kIs(H'):i_i+1\leq k \leq i_2\}$ for $Is(H')\cdot Z(H)/Is(H')$ and integers 
$d_k$ as in the statement where 
$$H_1=\langle v_i^{d_i}Is(H') : i_0+1\leq i \leq i_0+n\rangle= \langle w_iIs(H') : i_0+1\leq i \leq i_0+n\rangle,$$
as subgroups of $(Is(H')\cdot Z(H))/Is(H')$. Indeed let $H_0$ be any addition of $H$ containing the $w_i$. Then since $Is(H')\cdot H_0=Is(H')\times H_0$, it is always possible to pick the $v_i\in H_0$ in a way that replacing the $w_{i}$ in \eqref{viei} by a linear combination of the $v_i^{d_i}$ does not affect the $t_k(u_i^{e_i})$. 

 Hence there exists a central series as in the statement and a pseudo-basis $\bar{v}$ of $H$ of the same length as $\bar{u}$ and also the same periods. Notice that by our choice of $\bar{v}$ for any $1\leq k,i,j\leq m$, $t_k([v_i,v_j])=t_k([u_i,u_j])$ which verifies (1). Part (2) can be verified in a similar way. Statements 3-(a) and 3-(b) should be clear once it is noted that the matrix $(c_{ik})$ is the change of basis matrix corresponding to the bases obtained for $H_1$ above. To see why 3-(c) is true firstly observe that  the index of $\langle w_iIs(H'): i_1+1\leq i\leq i_2\rangle$ in $Is(H')\cdot Z(H)/Is(H)$ is $d$ by the choice of the $d_i$ and since $d=d_{i_1+1}\cdots d_{i_1+n}$. Now we can choose the number $e$ used in (5) of Lemma~\ref{mainlem} to be the number $e$ introduced here. Then it is clear that $d$ should be relatively prime to $e$, i.e. gcd$(d,e)=1$. 
 
 \end{proof}
\begin{rem} The number $p-n\geq 0$ reflects the maximal rank of a free abelian subgroup of $Z(G)$ (or $G_0$) that splits from $G$. Clearly, from the arguments above this number remains the same when we move from $G$ to a finitely generated elementarily equivalent copy $H$ of it, i.e the number $p-n$ is an elementary invariant of $G$ with respect to finitely generated models of $Th(G)$. \end{rem} 

Before stating some immediate corollaries of this result let us give a name to the construction above. This naming will be justified in the next section.

\begin{defn}[Finitely Generated Abelian Deformations] \label{abdef:defn1}Given a group $G$ presented as in Lemma~\ref{mainlem} and integers $d_i$ and $c_{ik}$ satisfying 3(b) and 3(c) of Proposition~\ref{maincor} the group $H$ constructed thus is called an abelian deformation of $G$.\end{defn}

Theorem~\ref{prop:embed} summarizes some of important implications of the results above.

\noindent\emph{Proof of Theorem~\ref{prop:embed}.} Pick $\bar{u}$ in $G$ and $v_1, \ldots ,v_{i_1},w_{i_1+1}, \ldots , w_{i_2}, v_{i_2+1}, \ldots, v_m$ in $H$ as in Proposition~\ref{maincor}. Then
$u_i\mapsto v_i$ if $i\neq i_1+1, \ldots, i_2$ and $u_i\mapsto w_i$ if $i=i_1+1, \ldots, i_2$ extends to a monomorphism $\phi$ of groups. Statements (a)-(d) are corollaries of the existence and structure of the monomorphism $\phi$. Statement (e) follows from the uniform interpretability (and has already been used in the proofs of the main results here) of this quotient in $G$.\\
\qed
 
\begin{thm}\label{tame:prop} If $G$ is a finitely generated nilpotent group with tame special gap, i.e. $Z(G) \leq Is(G')$, and $H$ is a finitely generated group elementarily equivalent to $G$ then $G\cong H$.\end{thm}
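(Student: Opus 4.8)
The plan is to run the coordinatization machinery of Section~\ref{main:sec} and observe that tameness forces that construction into its degenerate case, where \emph{every} structural constant of a suitable pseudo-basis is already pinned down by $Th(G)$. First I would record that $H$ is itself a finitely generated nilpotent group: nilpotency of class at most $c$ (the class of $G$) is expressible by a universal sentence, so $H$ inherits it from $G\equiv H$, while finite generation is a hypothesis. Next, tameness of the special gap means $Z(G)=I(G)=Is(G')\cap Z(G)$, i.e. $Z(G)\leq Is(G')$; this yields $M(G)=Is(G'\cdot Z(G))=Is(G')$ and $N(G)=Is(G')\cdot Z(G)=Is(G')$, so $M(G)/N(G)=1$, and from $Z(G)=G_0\oplus I(G)=G_0\oplus Z(G)$ one gets that the addition $G_0$ is trivial. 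In the notation of Lemmas~\ref{sp1} and~\ref{mainlem} this says precisely $i_0=i_1=i_2$, hence $n=i_1-i_0=0$ and $p=i_2-i_1=0$.

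Then I would fix a pseudo-basis $\bar u=(u_1,\dots,u_m)\in G^m$ adapted to the series~\eqref{fcs} as in Lemma~\ref{sp1} (the adjustment of Remark~\ref{stb} being vacuous here) and let $\Phi(x_1,\dots,x_m)$ be the $L$-formula produced by Lemma~\ref{mainlem}, so $G\models\Phi(\bar u)$. Because $n=p=0$, the structural constants that Lemma~\ref{mainlem} leaves unconstrained — the $t_k(u_i^{e_i})$ with $i_0+1\leq i\leq i_1$ and $i_1+1\leq k\leq i_2$ — range over an empty index set. Hence $\Phi(\bar u)$ records the full period vector $\bar e=(e_1,\dots,e_m)$ together with the \emph{complete} set of structural constants $\Gamma(\bar u)=\{\,t_k([u_i,u_j]),\,t_k(u_i^{e_i})\,\}$. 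Since $G\equiv H$, we have $H\models\exists\bar x\,\Phi(\bar x)$; pick a witness $\bar v\in H^m$. Applying Theorem~\ref{maincor} in this degenerate case, $\bar v$ is a pseudo-basis of $H$ (adapted to the corresponding central series $(L)$ of $H$) of the same period $\bar e$ as $\bar u$ and satisfying $[v_i,v_j]=\prod_{k>i_2}v_k^{t_k([u_i,u_j])}$ and $v_i^{e_i}=\prod_{k>i_2}v_k^{t_k(u_i^{e_i})}$ for all $i,j$ — in other words, $\bar v$ has exactly the same structural constants as $\bar u$ (with no matrix $(c_{ik})$ or scalars $d_k$ intervening, as these range over empty sets when $n=0$).

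Finally, $G$ and $H$ are finitely generated $\Z$-groups possessing pseudo-bases of the same period with the same set of structural constants, so by the $K$-isomorphism criterion stated immediately after Theorem~\ref{UC} (with $K=\Z$) I conclude $G\cong H$.

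The hard part, and essentially the only content beyond bookkeeping, is checking that Lemmas~\ref{sp1},~\ref{mainlem} and~\ref{stb} together with Theorem~\ref{maincor} — which are phrased in the running context of a \emph{non-regular} $G$ — remain valid and collapse correctly when $i_0=i_1=i_2$, so that $\Phi$ genuinely leaves no structural constant free and Theorem~\ref{maincor} transports the entire structural-constant data to $H$. Once this degenerate case is verified the argument is immediate; alternatively one could observe that a tame special gap makes $G$ regular and invoke Theorem~\ref{regular:prop}, but the direct route above is shorter and self-contained.
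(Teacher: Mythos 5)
Your argument is correct and is essentially the paper's own proof: the paper likewise observes that tameness forces $p=i_2-i_1=0$, so the pseudo-bases $\bar u$ of $G$ and $\bar v$ of $H$ supplied by Lemma~\ref{mainlem} and Theorem~\ref{maincor} have the same length, periods and structure constants, and then concludes $G\cong H$ via the isomorphism criterion following Theorem~\ref{UC}. Your additional bookkeeping about the degenerate case $i_0=i_1=i_2$ (and the alternative route through regularity) only makes explicit what the paper leaves implicit.
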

\begin{proof} With $\bar{u}$ and $\bar{v}$ chosen above if $p=i_2-i_1=0$ then the two pseudo-bases have the same length, periods and structure constants. So $G\cong H$.\\
\end{proof}
\noindent\emph{Proof of Theorem~\ref{regular:prop}.} By part (e) of Theorem~\ref{prop:embed} and Proposition \ref{rproperties:prop}, $H$ is regular. Now the statement follows from part (b) of Theorem~\ref{prop:embed}.\\
\qed

\section{Elementary equivalence of finitely generated nilpotent groups and the second cohomology group}\label{elemcohom:sec}
Our aim here is to prove the cohomological form, Theorem~\ref{mainthm}, of Proposition~\ref{maincor}. Here we follow the notation introduced in Lemma~\ref{mainlem}. Let us also recall the following notation
 \begin{align*}
 N(G)&=Is(G')\cdot Z(G)=Is(G')\times G_0\\
 M(G)&=Is(G'\cdot Z(G))\\
 \bar{G}&= \frac{G}{N(G)}
  \end{align*}
   We look at $G$ as the following extension, where $\mu$ is inclusion and $\pi$ is the canonical epimorphism:
\begin{equation}\label{ext:G} 1\rightarrow N(G) \xrightarrow{\mu} G \xrightarrow{\pi} \bar{G} \rightarrow 1.\end{equation}

 Assume $\chi$ is the coupling associated to the extension. Now by Fact~\ref{co} we know that
$G$ corresponds to a unique element in $H^2(\bar{G}, Z(N(G)))$. Note that $N(G)=Is(G')\times G_0$ and thus $Z(N(G))=N_1(G)\times G_0$ where $N_1(G)=Z(Is(G'))$. Since both $N_1(G)$ and $G_0$ are normal in $G$ the action of $G$ by conjugation on $N(H)$ fixes both of the direct factors. So the decomposition $Z(N(G))=N_1(G)\times G_0$ is a direct product of $\bar{G}$-submodules. This implies that a 2-cocycle $f\in Z^2(\bar{G}, Z(N(G))$ can be uniquely written as $f_1\oplus f_2$, modulo 2-coboundaries,  where $f_1\in Z^2(\bar{G},N_1(G))$ and $f_2\in Z^2(\bar{G}, G_0)$. We note that $f_2$ is the 2-cocycle whose class determines the extension:
\begin{equation}\label{ext:G:IsG'} 1\to G_0 \to \frac{G}{Is(G')}\to \bar{G}\to 1,\end{equation}
so indeed $f_2$ is a symmetric 2-cocycle. So we have shown here that 
 \begin{equation}\label{cohom-split1} H^2(\bar{G}, Z(N(G))\cong H^2(\bar{G}, N_1(G))\oplus Ext(\bar{G}, G_0)\end{equation}
 Moreover,

$$\bar{G}=\frac{G}{N(G)} \cong \frac{G}{M(G)} \times \frac{M(G)}{N(G)}.$$
Recall that for abelian groups $A$ and $B$, $Ext(A,B)=0$ whenever $A$ is free abelian. So 
\begin{equation}\begin{split}\label{reduction} Ext(\bar{G}, G_0)&\cong Ext(\frac{G}{M(G)}\times \frac{M(G)}{N(G)}, G_0)\\
&\cong Ext(\frac{G}{M(G)}, G_0)\oplus Ext(\frac{M(G)}{N(G)}, G_0)\\
&\cong Ext(\frac{M(G)}{N(G)}, G_0).
\end{split}\end{equation}

So~\eqref{cohom-split1} actually becomes 

 \begin{equation}\label{cohom-split2}\begin{split}
 H^2(\bar{G}, Z(N(G)))&\cong H^2(\bar{G}, N_1(G))\oplus Ext(\frac{M(G)}{N(G)}, G_0)\\
 \\
   [f]&\mapsto [f_1] \oplus [f_2],\end{split}\end{equation}

Where $[f]$ denotes the class of $f$ in the corresponding second cohomology group. We can also write more elaborate version of \eqref{reduction} and \eqref{cohom-split2} using the integers $e_{i_1+1}, \ldots, e_{i_2}$ in Lemma~\ref{mainlem}. Indeed:

   \begin{equation}\begin{split}\label{abcyc} Ext(\frac{M(G)}{N(G)}, G_0)&\cong Ext(\bigoplus_{i=i_0+1}^{i_1}\langle u_iN(G)\rangle, G_0)\\
   &\cong \bigoplus_{i=i_0+1}^{i_1}Ext(\langle u_jN(G)\rangle, G_0)\end{split}\end{equation}
   
 Now considering \eqref{cohom-split2} and Proposition~\ref{maincor} the reader might suspect that a finitely generated group $H\equiv G$ is isomorphic to an extension of $N(G)$ by $\bar{G}$ whose associated element in $H^2(\bar{G}, Z(N(G))$ is the class of a 2-cocycle $f_1\oplus f_2'$ where $f_1$ is the $f_1$ picked above but $\ds f'_2\in S^2(\frac{M(G)}{N(G)},G_0)$ might belong to a different class compared to $f_2$. 

We have not yet explicitly described how structure constants, which clearly define extensions, relate to 2-cocycles, which are other means of defining extensions. This correspondence is precisely the so-called correspondence between covering group theory and cohomology theory which is discussed in detail in chapter 11 of \cite{robin}. But we won't need to delve into that theory in detail here. Indeed in the case of abelian-by-(finite cyclic) extensions the correspondence can be described rather easily and that is all we need here. The reader might suspect all we need is a description of the symmetric 2-cocycle $f_2$ (or $f_2'$) in \eqref{cohom-split2} in terms of the structure constants defining an abelian extension of $G_0$ by $\bar{G}$. By \eqref{abcyc} this reduces to abelian-by-(finite-cyclic) extensions. Indeed by \eqref{abcyc}, $f_2$ can be uniquely written (modulo the 2-coboundaries) as, 
\begin{equation}\label{f2i} f_2=\sum_{i=i_0+1}^{i_1} f_{2i},\qquad  f_{2i}\in S^2(\langle u_iN(G)\rangle , G_0).  \end{equation}

The following lemma is a special case of exercise 11.1.5 in~\cite{robin}.

\begin{lem}\label{stcyc} Given an abelian group $A$ freely generated by the $\{u_1, \ldots, u_p\}$, integers $c_1, \ldots , c_p$ and an integer $e\geq 2$, the abelian group presented by
$$E=\langle u_1, u_2,\ldots, u_p, g: g^e=u_1^{c_1}\cdots u_p^{c_p}\rangle$$
can be described as an extension 
$$1\to A \to E \to \langle gA\rangle \to 1,$$ whose associated element in $Ext(\langle gA\rangle, A)$ is represented by $f$ defined below.
 \begin{equation*}
f(g^{s}A, g^tA) = \left\{
\begin{array}{lr}
1 & \text{if }~~ s+t<e\\
u_1^{c_1}\cdots u_p^{c_p} & \text{if }~~ s+t\geq e
\end{array} \right.
\end{equation*}\end{lem}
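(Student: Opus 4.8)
The plan is to realize $E$ concretely as a quotient of a free abelian group, check that the evident maps form the asserted short exact sequence, and then read off the factor set from the obvious transversal. Since this is a special case of a standard exercise, I would keep the argument elementary and self-contained.

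First I would rewrite the presentation additively: as $E$ is presented in the category of abelian groups, $E\cong F/\langle r\rangle$, where $F$ is the free abelian group on $u_1,\ldots ,u_p,g$ and $r = eg - c_1u_1-\cdots -c_pu_p$. The subgroup $A=\langle u_1,\ldots ,u_p\rangle$ of $F$ intersects $\langle r\rangle$ trivially, because in any nonzero multiple $kr$ the coefficient of $g$ is $ke\neq 0$; hence the composite $A\hookrightarrow F\to E$ is injective and I may identify $A$ with its image in $E$. Moreover $E/A = F/(A+\langle r\rangle)$, and modulo $A$ one has $r\equiv eg$, so $E/A$ is cyclic of order $e$, generated by $gA$ (this only uses $e\geq 2$). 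Thus we obtain the short exact sequence $1\to A\to E\to \langle gA\rangle\to 1$ with $\langle gA\rangle\cong \Z/e\Z$, the action on $A$ being trivial since $A\leq E$ and $E$ is abelian.

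Next I would take the transversal $\eta:\langle gA\rangle\to E$ with $\eta(g^sA)=g^s$ for $0\leq s<e$ and compute the associated factor set $f(x,y)=\eta(x)\eta(y)\eta(xy)^{-1}$. For $x=g^sA$, $y=g^tA$ with $0\leq s,t<e$: if $s+t<e$ then $xy=g^{s+t}A$, so $\eta(xy)=g^{s+t}$ and $f(x,y)=g^sg^tg^{-(s+t)}=1$; if $s+t\geq e$, write $s+t=e+w$ with $0\leq w<e$, so $xy=g^wA$, $\eta(xy)=g^w$, and $f(x,y)=g^{s+t}g^{-w}=g^e=u_1^{c_1}\cdots u_p^{c_p}$. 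This is exactly the $2$-cocycle $f$ displayed in the statement; its value depends only on $s+t$, so $f$ is symmetric, and therefore $[f]$ lies in $Ext(\langle gA\rangle,A)\leq H^2(\langle gA\rangle,A)$ and represents the extension $E$, as required.

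There is essentially no hard step here; the only point worth stating carefully is the bookkeeping identifying abelian extensions with symmetric cocycles and with $Ext$, together with the observation that the $\langle gA\rangle$-action on $A$ is forced to be trivial once $A$ is viewed as a subgroup of the abelian group $E$. Everything else is the normal-form computation for a one-relator abelian group presentation and the direct evaluation of $\eta(x)\eta(y)\eta(xy)^{-1}$.
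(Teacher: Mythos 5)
Your proof is correct. The paper itself gives no argument for this lemma --- it is stated as a special case of Exercise 11.1.5 in Robinson's book --- and your self-contained computation is precisely the standard one that reference intends: write $E=F/\langle r\rangle$ with $r=eg-c_1u_1-\cdots-c_pu_p$, note that $A\cap\langle r\rangle=0$ by looking at the $g$-coefficient so that $A$ embeds and $E/A\cong\Z/e\Z$ is generated by $gA$ with trivial action (forced since $E$ is abelian), and then evaluate the factor set $\eta(x)\eta(y)\eta(xy)^{-1}$ on the transversal $\eta(g^sA)=g^s$, $0\le s<e$, which yields exactly the displayed symmetric $2$-cocycle and hence a class in $Ext(\langle gA\rangle,A)\leq H^2(\langle gA\rangle,A)$. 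All the small points that need checking (injectivity of $A\to E$, the order of $gA$, symmetry of $f$) are handled, so the argument is a faithful filling-in of the step the paper delegates to the literature.
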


\begin{lem}\label{maincohom:lem} The group Abdef$(G,\bar{d},\bar{c})$ given in Definition~\ref{abdefcohom:defn} is well-defined and defines the same object as the one introduced in Definition~\ref{abdef:defn1}, and therefore if $H$ is finitely generated group $H\equiv G$ then $H\cong \text{Abdef}(G,\bar{d},\bar{c})$ for some integers $d_i$ and $c_{ik}$ as in the definition.\end{lem}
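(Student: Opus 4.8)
The lemma has two halves. First, I must show that the cohomological description $\text{Abdef}(G,\bar{d},\bar{c})$ of Definition~\ref{abdefcohom:defn} is internally consistent (well-defined) and produces the same group as the structure-constant description of Definition~\ref{abdef:defn1}. Second, combining this with Theorem~\ref{maincor}, I obtain that every finitely generated $H\equiv G$ is isomorphic to some $\text{Abdef}(G,\bar{d},\bar{c})$. The second half is essentially a translation of Theorem~\ref{maincor} through the cohomological dictionary, so the real content is the first half.

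**Well-definedness.** I would argue as follows. The group $G$ sits in the extension~\eqref{ext:G} with coupling $\chi:\bar{G}\to Out(N(G))$, and by Fact~\ref{co} every extension of $N(G)$ by $\bar{G}$ with coupling $\chi$ corresponds to a class in $H^2(\bar{G},Z(N(G)))$. Under the splitting~\eqref{cohom-split2}, and its refinement~\eqref{abcyc} and~\eqref{f2i}, the datum $[f_1]\oplus[f_2]$ breaks into $[f_1]\in H^2(\bar{G},N_1(G))$ together with a tuple of classes $[f_{2i}]\in Ext(\langle u_iN(G)\rangle,G_0)$, $i_0+1\le i\le i_1$. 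The construction replaces each $f_{2i}$ by the cocycle $f'_{2i}$ of the displayed form. I need to check that $f'_{2i}$ is actually a $2$-cocycle on the finite cyclic group $\langle u_iN(G)\rangle\cong \Z/e_i\Z$ valued in $G_0$: this is exactly the computation in Lemma~\ref{stcyc} (with $A=G_0$, $e=e_i$, and the element $\prod_{k=i_1+1}^{i_1+n}u_k^{d_kc_{ik}}$ playing the role of $u_1^{c_1}\cdots u_p^{c_p}$), so it follows from that lemma — one just notes $e_i\ge 2$ by the invariant-factor normalization in Remark~\ref{stb}, and the edge case $e_i=2$ is handled directly. Hence $[f_1]\oplus[\,\sum_i f'_{2i}\,]$ is a legitimate element of $H^2(\bar{G},Z(N(G)))$, and by Fact~\ref{co} it defines a genuine extension; this is $\text{Abdef}(G,\bar{d},\bar{c})$, and it is well-defined (independent of cocycle representatives) because everything is phrased at the level of cohomology classes.

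**Agreement of the two definitions.** Here I would match the two descriptions gap by gap. In Definition~\ref{abdef:defn1} the deformed group $H$ is the group with pseudo-basis $\bar v$ whose structure constants agree with those of $\bar u$ \emph{except} that $v_i^{e_i}=\bigl(\prod_{k=i_1+1}^{i_1+n}v_k^{d_kc_{ik}}\bigr)\bigl(\prod_{k=i_2+1}^{m}v_k^{t_k(u_i^{e_i})}\bigr)$ for $i_0+1\le i\le i_1$, as in Theorem~\ref{maincor}(3)(a). On the other hand, $\text{Abdef}(G,\bar d,\bar c)$ is the extension of $N(G)$ by $\bar G$ built from $[f_1]\oplus[\sum f'_{2i}]$. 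The claim is that these coincide. To see it, observe that the data $f_1$ and the $t_k(u_i^{e_i})$-part (for $k>i_2$) together encode the extension $1\to N(G)\to ?\to\bar G\to 1$ restricted away from the $G_0$-direction — and this part is \emph{unchanged} in passing from $G$ to $H$, matching $f_1$ being reused. The only modified piece is the $G_0$-valued symmetric cocycle on $M(G)/N(G)$, and by~\eqref{abcyc}, \eqref{f2i} it decomposes along the cyclic factors $\langle u_iN(G)\rangle$. For each such factor, Lemma~\ref{stcyc} identifies the extension $\langle v_i,\, G_0\ \text{(part)}\,:\ v_i^{e_i}=\prod_{k=i_1+1}^{i_1+n}v_k^{d_kc_{ik}}\rangle$ with precisely the cocycle $f'_{2i}$. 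So the presentation-by-structure-constants of Theorem~\ref{maincor}(3)(a) and the cocycle $\sum_i f'_{2i}$ define the same extension class, whence the two constructions yield isomorphic groups.

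**The last clause.** Given $H\equiv G$ finitely generated, Theorem~\ref{maincor} furnishes a pseudo-basis $\bar v$ of $H$ and integers $\bar d=(d_k)$, $\bar c=(c_{ik})$ satisfying conditions (3)(b), $|\det(c_{ik})|=1$, and (3)(c), $\gcd(d,e)=1$ — which are exactly conditions (a),(b) of Definition~\ref{abdefcohom:defn}. By the agreement just established, the group so presented is $\text{Abdef}(G,\bar d,\bar c)$, so $H\cong\text{Abdef}(G,\bar d,\bar c)$.

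**Main obstacle.** I expect the genuine difficulty to be the second step — making the bookkeeping between the structure-constant presentation and the cohomology-class presentation fully rigorous, in particular verifying that reusing $f_1$ on the $N_1(G)$-direction is legitimate (that the coupling and the non-$G_0$ part of the extension really are untouched when only the $t_k(u_i^{e_i})$ with $i_1+1\le k\le i_2$ are altered), and that the change-of-basis matrix $(c_{ik})$ interacts correctly with the identification in Lemma~\ref{stcyc}. The cocycle verification itself is routine once Lemma~\ref{stcyc} is invoked; the translation dictionary is where care is needed.
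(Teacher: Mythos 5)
Your proposal is correct and follows essentially the same route as the paper, whose proof is just a terse citation of the same ingredients: well-definedness from the splitting~\eqref{cohom-split2} together with Lemma~\ref{stcyc}, and the agreement of the two definitions plus the final isomorphism as direct consequences of Theorem~\ref{maincor} and Lemma~\ref{stcyc}. Your write-up simply spells out the bookkeeping that the paper leaves implicit.
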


 \begin{proof} Well-definedness follows from considerations resulting in \eqref{cohom-split2} and Lemma \ref{stcyc}. The isomorphism $H\cong \text{Abdef}(G,\bar{d},\bar{c})$ and the claim that the two definitions meet both are direct consequences of Proposition~\ref{maincor} and Lemma~\ref{stcyc}.
 
  \end{proof}

\noindent \emph{Proof of Theorem~\ref{mainthm}.} Obvious from the above lemma.\\ \qed

It is already known that up to isomorphism there are only finitely many finitely generated groups elementarily equivalent to a given finitely generated nilpotent group $G$, say as a corollary of the fact that elementarily equivalent finitely generated nilpotent groups have isomorphic finite quotients and Pickel's theorem that there only finitely many non-isomorphic such groups with isomorphic finite quotient. However from our results this can be seen quite easily.

\begin{cor}\label{number} Given a finitely generated nilpotent group $G$, presented an in Lemma~\ref{mainlem} there are, up to isomorphism, at most $e^p$ finitely generated groups elementarily equivalent to it, where $e=\left|\frac{M(G)}{N(G)}\right|$, and $p$ is the rank of $G_0$.\end{cor}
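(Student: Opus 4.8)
The plan is to read off the count directly from the Characterization Theorem (Theorem~\ref{mainthm}), which tells us that every finitely generated $H\equiv G$ is isomorphic to $\text{Abdef}(G,\bar d,\bar c)$ for some tuples $\bar d=(d_{i_1+1},\ldots,d_{i_1+n})$ and $\bar c=(c_{ik})$ satisfying conditions (a) and (b) of Definition~\ref{abdefcohom:defn}. So the number of isomorphism classes is bounded by the number of \emph{distinct groups} arising this way; it therefore suffices to bound the number of essentially different cocycles $f'_2=\sum_{i=i_0+1}^{i_1} f'_{2i}$ that can occur, where by \eqref{abcyc} and Lemma~\ref{stcyc} the class of $f'_{2i}$ in $Ext(\langle u_iN(G)\rangle,G_0)\cong Ext(\Z/e_i\Z,G_0)\cong G_0/e_iG_0$ is represented by the element $\prod_{k=i_1+1}^{i_1+n}u_k^{d_kc_{ik}}$.

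First I would observe that $\text{Abdef}(G,\bar d,\bar c)$ depends on $(\bar d,\bar c)$ only through the class $[f'_2]\in Ext(M(G)/N(G),G_0)$, since $[f_1]$ is fixed and the coupling $\chi$ is fixed; so the count reduces to bounding how many classes $[f'_2]$ can be hit. Next, by \eqref{abcyc} this group splits as $\bigoplus_{i=i_0+1}^{i_1} Ext(\langle u_iN(G)\rangle,G_0)$, and each summand $Ext(\Z/e_i\Z,G_0)\cong G_0/e_iG_0$ has order exactly $e_i^{\,p}$ because $G_0\cong\Z^p$. Multiplying over $i=i_0+1,\ldots,i_1$ gives $\prod_i e_i^{\,p}=\Big(\prod_i e_i\Big)^{p}=e^{p}$, using $e=|M(G)/N(G)|=\prod_{i=i_0+1}^{i_1}e_i$ from the invariant factor decomposition in Lemma~\ref{mainlem}/Remark~\ref{stb}. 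Hence the ambient group $Ext(M(G)/N(G),G_0)$ already has order $e^{p}$, and a fortiori the subset of classes realized by admissible $(\bar d,\bar c)$ has size at most $e^{p}$; since non-isomorphic $H$ must correspond to different classes $[f'_2]$ (the isomorphism type is recovered from $[f_1]\oplus[f'_2]$ via Fact~\ref{co}), there are at most $e^{p}$ isomorphism classes of finitely generated $H\equiv G$.

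The only point requiring a little care — and the step I expect to be the mild obstacle — is that distinct elements of $Ext(M(G)/N(G),G_0)$ need not yield non-isomorphic groups (different cocycles can define isomorphic extensions under an automorphism of $N(G)$ or $\bar G$ permuting the data), so the bound $e^{p}$ is an \emph{upper} bound rather than an exact count; but that is exactly what the statement claims (``at most''), so no refinement is needed. One should also note that the conditions $\gcd(d,e)=1$ and $|\det(c_{ik})|=1$ only \emph{restrict} the allowable tuples, which can only decrease the number of realized classes, so they do not affect the upper bound. Assembling these observations gives the corollary. \qed
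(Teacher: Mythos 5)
Your proposal is correct and follows essentially the same route as the paper: invoke Theorem~\ref{mainthm} to bound the number of isomorphism classes by $\left|Ext\left(\frac{M(G)}{N(G)},G_0\right)\right|$, then compute this order as $\prod_i e_i^{\,p}=e^p$ via the splitting \eqref{abcyc} and $Ext(\Z/e_i\Z,\Z^p)\cong \Z^p/e_i\Z^p$. The extra remarks about the map from cohomology classes to isomorphism types being possibly non-injective and about the conditions on $\bar d,\bar c$ only shrinking the set are sound and consistent with the ``at most'' formulation.
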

\begin{proof} Indeed by Theorem~\ref{mainthm} there are at most $|Ext(\frac{M(G)}{N(G)},G_0)|$ such groups up to isomorphism. But by \eqref{abcyc}

\begin{align*}  Ext(\frac{M(G)}{N(G)},G_0)&\cong \bigoplus_{i=i_0+1}^{i_1}Ext(\langle u_iN(G)\rangle, G_0)\\
&\cong\bigoplus_{i=i_0+1}^{i_1}Ext(\frac{\Z}{e_i\Z}, \Z^p)\\
&\cong\bigoplus_{i_0+1}^{i_1}\frac{\Z^p}{e_i\Z^p}\end{align*}

So there are at most $e^p$ number of non-isomorphic groups as such.

\end{proof}

\section{The converse of the characterization theorem}\label{converse:sec}

In this subsection we prove the converse of Characterization Theorem. 

 Let us fix some notation first. Let $\D$ be a non-principal ultrafilter on an index set $I$. By $G^*$ for a group $G$ we mean the ultrapower $G^I/\D$ of $G$. The equivalence class of $x \in G^I$ in $G^*$ is denoted by $\widetilde{x}$. 
\begin{lem}\label{equality:lem} Let $G$ be a finitely generated nilpotent group, and let $\D$ be a non-principal ultrafilter on $I$. Then
\begin{enumerate}
\item $(G')^*= (G^*)'$,
\item $(Is(G'))^*=Is((G^*)')=Is((G')^*),$
\item $Z(G^*)=(Z(G))^*$,
\item If $G_0$ is an addition of $G$ then $(G_0)^*$ is an addition of $G^*$.
\end{enumerate}\end{lem}
\begin{proof} For (1) the inclusion $\geq$ follows from the fact that $(G^I)'$ is generated by $[x,y]=z$, $x, y\in G^I$, where $z(i)=[x(i),y(i)]\in G'$ for $\D$-almost every $i\in I$. But then the equivalence class of $[x,y]$ is in $(G')^*$. The other inclusion follows from the fact that $G'$ is of finite width. Indeed, let $x\in G^I$ be such that the equivalence class $\tilde{x}\in (G')^*$ and assume that width of $G'$ is $n$. Then for $\D$-almost every $i\in I$, $x(i)=[y_1(i), z_1(i)]\cdots[y_n(i),z_n(i)].$ Define $x_j\in G^I$, $j=1,\ldots n$ by $x_j(i)=[y_j(i),z_j(i)]$. Obviously $\widetilde{x_j}\in (G^*)'$, $j=1, \ldots ,n$, and $\tilde{x}=\widetilde{x_1}\cdots \widetilde{x_n}$. This implies the result.      

For (2) the equality of the last two terms follows from (1.). Moreover
$$\tilde{x}\in Is((G')^*)\Leftrightarrow \tilde{x}^m\in (G')^* \Leftrightarrow \widetilde{x^m}\in (G')^* \Leftrightarrow \tilde{x}\in (Is(G'))^*.$$ 
(3) is clear. (4) is implied by (2) and (3) and the definition of an addition. 
\end{proof}
Again we stick with the notation introduced in Lemma~\ref{mainlem} and Proposition~\ref{maincor}.

\begin{lem}\label{satabelian:lem} Assume $A$ is a free abelian group of rank $n$ with basis $v_1, \ldots ,v_n$. Let $A^*$ the ultrapower of $A$ over an $\aleph_1$-incomplete ultrafilter $\D$ and let
\begin{itemize}
\item  $(c_{ij})$ be an $n\times n$ matrix with integer entries and $det((c_{ij}))=\pm 1$
\item  $\alpha_i$, $i=1, \ldots, n$, be elements of the ultrapower $\Z^*$ of the ring of integers $\Z$ over $\D$, such that $p\nmid \alpha_i$ for any prime number $p$, where $|$ denotes division in the ring $\Z^*$.    
\end{itemize} Then there is an automorphism $\psi:A^* \to A^*$ extending $v_i\mapsto \prod_{k=1}^nv_k^{c_{ik}\alpha_k}$. \end{lem} 
\begin{proof} Recall that $A^*$ is an $\aleph_1$-saturated abelian group. By the structure theory of saturated abelian groups (see either of~\cite{Szmielew} or~\cite{eklof}) there is an automorphism $\eta$ of $A^*$ such that $\eta(v_k)=v_k^{\alpha_k}$. Note that the automorphism $\eta$ is not necessarily a $\Z^*$-module automorphism. However since $det(c_{ik})=\pm 1$ there is $\Z^*$-module automorphism of $A^*$ extending $v_i\mapsto \prod_{k=1}^nv_k^{c_{ik}}$. This proves the statement.\end{proof} 
 
\begin{thm}\label{converse} Assume $G$ is a finitely generated nilpotent group presented as in Lemma~\ref{mainlem} and Lemma~\ref{stb}. If $H=Abdef(G,\bar{d}, \bar{c})$ is any abelian deformation of $G$ then   $$G\equiv H.$$\end{thm}
\begin{proof}
In order to prove the statement we prove that ultra-powers $G^*=G^\mathbb{N}/\D$ and $H^*=H^{\mathbb{N}}/\D$ of $G$ and $H$ over any $\omega_1$-incomplete ultrafilter $(\mathbb{N},\D)$ are isomorphic.
 
 Assume $\bar{u}$ and $\bar{v}$ are the pseudo-bases of $G$ and $H$ respectively described in Lemma~\ref{stb} and Proposition~\ref{maincor}. Recall that $e=e_{i_0+1}\cdots e_{i_1}$, $d=d_{i_1}\cdots d_{i_1+n}$ and gcd$(d,e)=1$. Assume $\pi$ denotes the set of all prime numbers and that, $\pi_k$ is the set of all prime numbers $p$, such that $p|d_k$, $i_1+1\leq k \leq i_1+n$. Let us denote the $j$'th prime number in $\pi\setminus \pi_k$ by $p_{kj}$ and the product of the first $j$ primes in $\pi\setminus \pi_k$ by $q_{kj}$.
 
 For each $j\in \N$ and for all $i_0+1\leq i \leq i_1$ define 
 $$w_{ij}=\prod_{k=i_1+1}^{i_1+n}v_k^{c_{ik}(d_k+q_{kj}e)} \in im(\phi_j).$$
 Now let, $w_i^*\in H^*$ and $q_k^*\in \Z^*$ denote the classes of $(w_{ij})_{j\in \N}$ and $(q_{kj})_{j\in \N}$ respectively. Indeed  $$w^*_i=\prod_{k=i_1+1}^{i_1+n}v_k^{c_{ik}(d_k+q_k^*e)}.$$ Let us set $\alpha_k=d_k+q_k^*e$ for each relevant $k$.
 
 Next we claim that the $\alpha_k$ satisfy hypothesis (b) of Lemma~\ref{satabelian:lem}, that is, no prime $p$ divides $\alpha_k=d_k+q_k^*e$ for each $k$,  $k=i_1+1, \ldots, i_1+n$. To prove this we recall that $q_{kj}=p_{k1}\cdots p_{kj}$ where the $p_{k1}, \ldots ,p_{kj}$ are the first $j$ primes that do not divide $d_k$. Pick a prime $p$. If $p\in \pi_d$, i.e. $p|d_k$ and $p|(d_k+q_{kj}e)$, then $p|q_{kj}e$ which contradicts the choice of $q_{kj}$ and the fact that gcd$(d_k,e)=1$. So for such $p$, $p\nmid (d_k+q_{kj}e)$. Now pick a prime $p\in \pi\setminus \pi_k$, i.e $p\nmid d_{k}$. Then $p=p_{kt}$ for some $t\in \N$, meaning that $p$ is a factor of $q_{kj}$ for every $j\geq t$. So $p|q_{kj}e$ for every $j\geq t$. Therefore, for every such $j$ if $p|(d_k+q_{kj}e)$ then $p|d_k$, which is impossible. So for every $j\geq t$, $p\nmid d_k+q_{kj}e$. So indeed for any prime $p$, $p\nmid (d_k+q_{k}^*e)$.

 Let $G_0$ ($H_0$) be the addition of $G$ ($H$) generated by $u_i$ ($v_i$), $i=i_1+1, \ldots , i_2$. By Lemma~\ref{equality:lem} the $\Z^*$-submodule $G^*_0$ ($H^*_0$) of $Z(G^*)$ ($Z(H^*)$) generated be the $u_i$ ($v_i$), $i=i_1+1, \ldots , i_2$ is an addition of $G^*$ ($H^*$) and $G^*_0=(G^*)_0=(G_0)^*$ (the same in $H$). By Lemma~\ref{satabelian:lem} there exists an isomorphism $\psi:G^*_0\to H^*_0$ extending $u_k \to v_k$ if  $i_1+n< k\leq i_2$ and $u_k\to w^*_k$, $i_1<k \leq i_1+n$. 
 
 By construction there exists a monomorphism $\phi:G \to H$ of groups such that: 
  \begin{equation*} 
   \phi(u_i)=\left\{
  \begin{array}{ll}
  v_i & \text{if } i\neq i_1+1,\ldots i_1+n\\ \\
  \prod_{k=i_1+1}^{i_1+n}v_k^{d_kc_{ik}} & \text{if }i= i_1+1,\ldots i_1+n \end{array}\right.
   \end{equation*}
  Actually $\phi$ defined above is the same $\phi$ as in Proposition~\ref{prop:embed} while the $w_i$ are written with respect to the new basis of $H_0$ found in the proof Proposition~\ref{maincor}.  For each $j\in \N$ one could twist the monomorphism $\phi:G\to H$ to get a new one denoted by $\phi_j:G\to H$ and defined by: 
 \begin{equation*} 
   \phi_j(u_i)=\left\{
  \begin{array}{ll}
  v_i & \text{if }i\neq i_0+1,\ldots,i_1,\ldots , i_1+n\\ \\
  v_i\prod_{k=i_1+1}^{i_1+n}v_k^{q_{kj}\hat{e}_ic_{ik}}& \text{if }i= i_0+1,\ldots ,i_1\\ \\
  \prod_{k=i_1+1}^{i_1+n}v_k^{d_kc_{ik}+q_{kj}ec_{ik}} & \text{if }i= i_1+1,\ldots, i_1+n \end{array}\right.
   \end{equation*}
where $\hat{e}_i=e/e_i$. Again note that $G$ and $im(\phi_j)$ are generated by the pseudo-bases of the same lengths, periods and structure constants. Let $\phi^*:G^*\to H^*$ be the monomorphism induced $(\phi_j)_{j\in \N}$. 

Consider the subgroup $G^*_f$ of $G^*$ generated by $$\{u_i^{\alpha}, u_j: i\neq i_0+1,\ldots,i_1,\ldots , i_1+n, \alpha\in \Z^*, j=i_0+1,\ldots, i_1\}.$$ We claim  that $G^*=G^*_f \cdot G^*_0$. Firstly $G^*$ is generated by all the $u_i^\alpha$, $\alpha\in \Z^*$, exponentiation defined in an obvious manner. All these generators belong to $G^*_f\cdot G^*_0$ with the possible exceptions of $i=i_0+1, \ldots i_1$. However by  Lemma~\ref{equality:lem} 
$$\frac{Is((G^*)'\cdot Z(G^*))}{Is((G^*)')\cdot Z(G^*)}\cong \frac{Is(G'\cdot Z(G))}{Is(G')\cdot Z(G)}$$
is a finite abelian group and so we only need integer powers of the $u_i$, $i=i_0+1, \ldots i_1$ in the generating set. This proves the claim.
  
Now given $x\in G^*$ there are $y\in G^*_f$ and $z\in G^*_0$ such that $x=yz$. Now define a map $\eta:G^* \to H^*$ by 
$$\eta(x)= \phi^*(y)\psi(z).$$ 
To show that $\eta$ is well-defined we need to check if $\phi^*$ and $\psi$ agree on $G^*_f\cap G^*_0$, which is clear by definition of the maps $\phi^*$ and $\psi$. $\eta$ is a homomorphism since $\phi^*$ and $\psi$ are and $\psi$ maps a central subgroup of $G^*$ into a central subgroup of $H^*$. It is injective since both $\phi^*$ and $\psi$ are injective. Finally  $H^*=\phi^*(G^*_f)\cdot H^*_0$ and by construction $H^*_0=im(\psi)$. Therefore $\eta$ is surjective. We have proved that $\eta: G^* \to H^*$ is an isomorphism of groups and by the Keisler-Shelah's theorem, we have proved that
 $$G \equiv H.$$ 

\end{proof}

\section{Zilber's example}\label{Zilber:sec}

Here we present an example due to B. Zilber~\cite{Z71} which was the first example of two finitely generated nilpotent groups $G$ and $H$ where $G\equiv H$ but $G\ncong H$. We shall show that $G$ and $H$ are best described as abelian deformations of one another.

Consider the groups $G$ and $H$ presented (in the category of 2-nilpotent groups) as follows.
$$G=\langle a_1,b_1,c_1,d_1|a^5_1 \text{ is central}, [a_1,b_1][c_1,d_1]=1\rangle,$$
$$H=\langle a_2,b_2,c_2,d_2|a^5_2 \text{ is central}, [a_2,b_2]^2[c_2,d_2]=1\rangle.$$
B. Zilber~\cite{Z71} proved that $G\equiv H$ but $G\ncong H$. Let us first apply a Titze transformation to both $G$ and $H$ to get
$$G\cong \langle a_1,b_1,c_1,d_1,f_1|f_1 \text{ is central}, a_1^5f^{-1}_1=1, [a_1,b_1][c_1,d_1]=1\rangle$$
$$H\cong \langle a_2,b_2,c_2,d_2,f_2|f_2 \text{ is central}, a_2^5f_2^{-1}=1, [a_2,b_2]^2[c_2,d_2]=1\rangle.$$
Now we are going to show that $H$  is an abelian deformation of $G$. So define a group $K$ by
$$K= \langle a_3,b_3,c_3,d_3,f_3|f_3 \text{ is central}, a_3^5f^{-2}_3=1, [a_3,b_3][c_3,d_3]=1\rangle.$$
Note that we can choose $G_0=\langle f_1=a_1^5\rangle$ and $K_0=\langle f_3\rangle$. We also have that $Is(G'\cdot G_0)=G'\cdot Is(G_0),$ and
$Is(K'\cdot K_0)=K'\cdot Is(K_0)$, and
so $$Is(G'\cdot G_0)/Is(G')\cdot G_0=Is(G_0)/G_0=\langle a_1|a_1^5=1\rangle$$ and $$Is(K'\cdot K_0)/Is(K')\cdot K_0=Is(K_0)/K_0=\langle a_3|a_3^5=1\rangle.$$
Indeed using notation of Definition~\ref{abdefcohom:defn}, $K=\text{Abdef}(G,d,c)$ where $d=2$ and $(c)$ is $1\times 1$ matrix (1). Indeed we only changed the structure constant $t_{f_1}(a_1^5)$ to deform  $G$ to $K$. However in $K$
$$(a_3^3f^{-1}_3)^2=a_3^6f_3^{-2}=a_3(a_3^5f_3^{-2})=a_3.$$
Now apply the corresponding Titze transformation to the presentation of $K$ to get
$$K= \langle a^3_3f_3^{-1},b_3,c_3,d_3,f_3|f_3 \text{ is central}, (a_3^3f_3^{-1})^5f_3^{-1}=1, [a_3^3f_3^{-1},b_3]^2[c_3,d_3]=1\rangle.$$
Obviously $H\cong K.$

\section*{References}



\end{document}